\documentclass[reqno]{amsart}
\usepackage{amsaddr}
\usepackage[english]{babel}
\counterwithin{figure}{section}
\usepackage{csquotes}
\usepackage{bbold}
\usepackage{mathabx}
\usepackage[letterpaper,top=2cm,bottom=2cm,left=3cm,right=3cm,marginparwidth=1.75cm]{geometry}
\usepackage{amsmath, amsfonts}
\usepackage{graphicx}
\usepackage{enumitem}
\usepackage{caption}
\usepackage{subcaption}
\usepackage[colorlinks=true, allcolors=blue]{hyperref}
\usepackage{seqsplit}
\usepackage{mathtools}
\mathtoolsset{showonlyrefs}
\usepackage{booktabs}
\usepackage{amsthm}
\newtheorem{theorem}{Theorem}[section]
\newtheorem{definition}{Definition}[section]
\newtheorem{rhp}{Riemann-Hilbert problem}[section]
\newtheorem{lemma}{Lemma}[section]
\newtheorem{proposition}{Proposition}[section]

\newtheorem{remark}{Remark}[section]
\newcommand{\Cy}{\mathcal{C}}
\newcommand{\re}{\mathrm{Re}}
\newcommand{\im}{\mathrm{Im}}
\newcommand{\ii}{\mathrm{i}}
\newcommand{\ee}{\mathrm{e}}
\newcommand{\C}{\mathbb{C}}
\newcommand{\dd}{\mathrm{d}}
\newcommand{\R}{\mathbb{R}}
\begin{filecontents*}{mrnumber.dbx}
\DeclareDatamodelFields[type=field,datatype=verbatim]{mrnumber}
\DeclareDatamodelEntryfields{mrnumber}
\end{filecontents*}
\usepackage[giveninits=true,uniquename=false,uniquelist=false,style=alphabetic,sortcites=false,natbib=true,backend=biber,datamodel=mrnumber, maxbibnames=99,maxnames=99,maxalphanames=99]{biblatex}
\addbibresource{references.bib}
\DeclareFieldFormat{mrnumber}{%
  MR\addcolon\space
  \ifhyperref
    {\href{http://www.ams.org/mathscinet-getitem?mr=#1}{\nolinkurl{#1}}}
    {\nolinkurl{#1}}}

\renewbibmacro*{doi+eprint+url}{%
  \iftoggle{bbx:doi}
    {\printfield{doi}}
    {}%
  \newunit\newblock
  \printfield{mrnumber}%
  \newunit\newblock
  \iftoggle{bbx:eprint}
    {\usebibmacro{eprint}}
    {}%
  \newunit\newblock
  \iftoggle{bbx:url}
    {\usebibmacro{url+urldate}}
    {}}

\title[Asymptotic properties of special function solutions
of the Painlev\'e III equation]{Asymptotic properties of special function solutions
of the Painlev\'e III equation for fixed parameters.}
\author{Hao Pan}
\address[Hao Pan]{Department of Mathematics, University of Michigan, East Hall, 530 Church St., Ann Arbor, MI 48109, USA.\\ Email: \href{mailto:haopan@umich.edu}{haopan@umich.edu} }
\author{Andrei Prokhorov}
\address[Andrei Prokhorov]{Department of Mathematics, University of Michigan, East Hall, 530 Church St., Ann Arbor, MI 48109, USA, and\\
St. Petersburg University, 7/9 Universitetskaya nab. 199034 St. Petersburg, Russia, and \\Department of Statistics, University of Chicago, 5747 South Ellis Avenue, Chicago, IL 48109, USA \\Email: \href{mailto:andreip@umich.edu}{andreip@uchicago.edu}}
\date{}
\begin{document}
\begin{abstract}
In this paper, we compute the small and large $x$ asymptotics of the special function solutions of the Painlev\'e-III equation in the complex plane. We use the representation in terms of Toeplitz determinants of Bessel functions obtained in \cite{masuda}. Toeplitz determinants are rewritten as multiple contour integrals using Andr\`eief's identity. The small and large $x$ asymptotics are obtained using elementary asymptotic methods applied to the multiple contour integral. The asymptotics is extended to the whole complex plane using analytic continuation formulas for Bessel functions. The claimed result has not appeared in the literature before. We note that the Toeplitz determinant representation is useful for numerical computations of corresponding solutions of the Painlev\'e-III equation.

\noindent\textit{Key words and phrases: Toeplitz determinant, Painlev\'e equations, Bessel function, asymptotic analysis, integrable systems, ordinary differential equations}
\end{abstract}
\maketitle
\vspace{-1cm}
\tableofcontents
\section{Introduction}
Painlev\'e equations are six {nonlinear second-order ordinary differential equations}. They are written in the form of $u''=R(u',u,t)$ with $R$ a rational function. Their solutions have the so-called {Painlev\'e property}. This means that the locations of singularities of branching type in the complex plane do not depend on the initial conditions, but the locations of isolated singularities might depend on the initial conditions. They were discovered at the beginning of the 20th century in the works \cite{Painleve, Gambier}, see also \cite{Ince}. The solutions of the Painlev\'e equations are classified into three groups: rational and algebraic solutions, solutions expressed in terms of classical special functions, and the rest, see \cite{umemura_watanabe_p2}. For generic values of parameters, the solutions are not rational or algebraic and cannot be reduced to classical special functions, so they belong to the third class and are called Painlev\'e transcendents. We are interested in solutions of the Painlev\'e III equation expressed in terms of Bessel functions (see \cite{Umemura_Watanabe, Murata}). For the applications of such special function solutions of the Painlev\'e III equation in random matrix theory, we refer the reader to \cite{FW03},\cite{forrester_witte_2006}, \cite{ZCL}.

We start with presenting the Painlev\'e III equation
\begin{align}
\label{eq:PIII}
u''=\dfrac{\left( u'\right)^2}{u}-\dfrac{u'}{x}  + \dfrac{\alpha u^2 + \beta}{x}+u^3-\frac{1}{u}, \quad \alpha,\beta \in  \mathbb{C}.
\end{align}
Consider the Toeplitz determinant of cylinder functions:
\begin{equation}\label{def:hankel_bessel_determinant}
\Delta_n(x,\alpha)=\det\left(\left\{\Cy_{\frac{\alpha}{2}-j+k}(x)\right\}_{j,k=0}^{n-1}\right), \quad n\in\mathbb{N}
\end{equation}
with
\begin{equation}\label{def:f_nu}
         \Cy_{\nu}(x)=d_1J_\nu(x)+d_2Y_{\nu}(x),\quad d_1, d_2\in\mathbb{C},
     \end{equation}
and $J_{\nu}(x)$, $Y_\nu(x)$ are Bessel functions of first and second kinds. In addition, denote $\Delta_0(x,\alpha)=1$. 
\begin{proposition}\label{prop:q_n_formula}
The expression 
\begin{equation}\label{eq:q_n_formula}
u_n(x,\alpha)=-\frac{\Delta_{n+1}(x,\alpha-2)\Delta_n(x,\alpha)}{\Delta_{n+1}(x,\alpha)\Delta_n(x,\alpha-2)},\quad n\in \mathbb{N}\cup \{0\},\quad \alpha\in\mathbb{C}
\end{equation}
solves the Painlev\'e III equation with shifted parameters
\begin{equation}\label{eq:q_n_painleve_equation}
u_n''=\dfrac{\left( u_n'\right)^2}{u_n}-\dfrac{u_n'}{x}  + \dfrac{(\alpha+2n)u_n^2 + (-\alpha+2+2n)}{x}+u_n^3-\frac{1}{u_n}.
\end{equation}
\end{proposition}
The fact that the Toeplitz determinants of Bessel functions are related to the solutions of the Painlev\'e III equation is well known; see, for example, \cite{C23}, \cite[(3.5)]{okamoto}. The formula \cite[(4.20)]{masuda} is very similar to \eqref{eq:q_n_formula}, but it involves the Wronskian matrix instead of the Toeplitz matrix. It is not difficult to reduce one to another; see \cite{FW03} or Proposition \ref{lem:tau-formula}. The main advantage of \eqref{eq:q_n_formula} compared to the classical formula \cite[(3.5)]{okamoto} is the absence of a derivative operation applied to the corresponding determinants. A similar formula for the case of rational solutions can be found in \cite{clarkson2018constructive}. Moreover, \cite{masuda} contains Wronskian formulas for special function solutions of Painlev\'e-IV, V, and VI without derivatives. For convenience of the reader, we present the proof of Proposition \ref{prop:q_n_formula} in Section \ref{sec:prop_1_proof}.

Now we are ready to present the first result of our asymptotic analysis.
\begin{theorem}\label{thm:hankel_bessel_det_asymptotic}
 The Toeplitz determinant  \eqref{def:hankel_bessel_determinant} admits the following  $x \rightarrow 0$, $-\pi<\arg(x)<\pi$ asymptotics for fixed $d_1,d_2\in \mathbb{C}$, $n\in\mathbb{N}\cup\{0\}$, $\alpha\in\mathbb{C}\setminus (2\mathbb{Z})$.
\begin{enumerate}
    \item If $d_2\neq 0$ and $Re(\alpha)>2n-2 \text{ or }d_1\sin\left(\frac{\pi\alpha}{2}\right)+d_2\cos\left(\frac{\pi\alpha}{2}\right)=0$, then $$\Delta_n(x,\alpha)\sim (-1)^{\frac{n(n+1)}{2}}\left(\frac{d_2}{\pi}\right)^{n}\frac{G(n+1)G(\frac{\alpha}{2}+1)}{G(\frac{\alpha}{2}-n+1)}\left(\frac{x}{2}\right)^{-\frac{n\alpha}{2}}, \text{ as }x\to 0,\quad  -\pi<\arg(x)<\pi.$$
    \item If $d_2\neq 0$, $d_1\sin\left(\frac{\pi\alpha}{2}\right)+d_2\cos\left(\frac{\pi\alpha}{2}\right)\neq 0$,  and $2n-4j-2<Re(\alpha)<2n-4j+2\\\text{ for some }j=1, 2 \ldots,n-1$, then \begin{align*}&\Delta_n(x,\alpha)\sim (-1)^{\frac{n(n+1)}{2}+j(n-1)}\left( \frac{d_2}{\pi}\right)^n\left(\frac{d_1}{d_2}\sin\left(\frac{\pi\alpha}{2}\right)+\cos\left(\frac{\pi\alpha}{2}\right)\right)^j\\&\times\frac{G(n-j+1)G(-\frac{\alpha}{2}+n-j+1)G(j+1)G(j+\frac{\alpha}{2}+1)}{G(-\frac{\alpha}{2}+n-2j+1)G(\frac{\alpha}{2}-n+2j+1)}\left(\frac{x}{2}\right)^{(\alpha-2n+2j)j-\frac{n\alpha}{2}}, \\&\text{ as }x\to 0,\quad -\pi<\arg(x)<\pi.\end{align*}
    \item If $d_1\sin\left(\frac{\pi\alpha}{2}\right)+d_2\cos\left(\frac{\pi\alpha}{2}\right)\neq 0$, and $Re(\alpha)<-2n+2\text{ or }d_2=0$, then \begin{align}
    &\Delta_n(x,\alpha)\sim \frac{(-1)^{\frac{n(n+1)}{2}}}{\pi^n}\left(d_1\sin\left(\frac{\pi\alpha}{2}\right)+d_2\cos\left(\frac{\pi\alpha}{2}\right)\right)^{n}\frac{G(n+1)G(-\frac{\alpha}{2}+1)}{G(-\frac{\alpha}{2}-n+1)}\left(\frac{x}{2}\right)^{\frac{n\alpha}{2}}, \\&\text{ as }x\to 0,\, -\pi<\arg(x)<\pi.\end{align}
\end{enumerate}
\normalsize
where $G(x)$ refers to the Barnes $G$-function.\\
\end{theorem}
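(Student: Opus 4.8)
The plan is to reduce everything to the elementary $x\to0$ behaviour of the individual entries and then analyse a competition of powers inside the determinant. First I would expand each cylinder function. Writing $Y_\nu=\bigl(\cos(\pi\nu)J_\nu-J_{-\nu}\bigr)/\sin(\pi\nu)$ and inserting the ascending series of $J_{\pm\nu}$, I obtain for $\nu=\frac\alpha2-j+k$ (which under the hypothesis $\alpha\notin2\mathbb Z+i\mathbb R$ satisfies $\re\nu\notin\mathbb Z$, so that $\nu\notin\mathbb Z$ and no logarithms appear)
\begin{equation*}
\Cy_\nu(x)=A_{jk}\left(\tfrac x2\right)^{\frac\alpha2-j+k}\bigl(1+O(x^2)\bigr)+B_{jk}\left(\tfrac x2\right)^{-\frac\alpha2+j-k}\bigl(1+O(x^2)\bigr),
\end{equation*}
where, after simplifying the trigonometric factors through $\cos\bigl(\pi(\tfrac\alpha2-m)\bigr)=(-1)^m\cos\tfrac{\pi\alpha}2$ and $\sin\bigl(\pi(\tfrac\alpha2-m)\bigr)=(-1)^m\sin\tfrac{\pi\alpha}2$, one finds $A_{jk}=\frac{d_1\sin\frac{\pi\alpha}2+d_2\cos\frac{\pi\alpha}2}{\sin\frac{\pi\alpha}2}\,\frac{1}{\Gamma(\frac\alpha2-j+k+1)}$ and $B_{jk}=\frac{-(-1)^{j-k}d_2}{\sin\frac{\pi\alpha}2}\,\frac{1}{\Gamma(-\frac\alpha2+j-k+1)}$. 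Since $A_{jk}\propto d_1\sin\frac{\pi\alpha}2+d_2\cos\frac{\pi\alpha}2$ and $B_{jk}\propto d_2$, the coefficients already anticipate the prefactors and the two degenerate conditions in the statement.

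Next I would split each row of the Hankel matrix into its $A$-part and its $B$-part and expand the determinant by multilinearity over the $2^n$ choices of which rows contribute their $B$-term, say $s$ rows. For each choice the lowest power of $x$ is produced by the permutation that sends the $B$-rows to the largest columns and the $A$-rows to the smallest, and a short count gives a contribution of order $\left(\tfrac x2\right)^{E(s)}$ with $E(s)=\frac\alpha2(n-2s)-2s(n-s)$, independent of which $s$ rows were chosen. As $E$ is convex in $s$ with minimiser near $s^\ast=\frac n2+\frac\alpha4$, the dominant contribution comes from the integer $s$ closest to $s^\ast$: this is $s=n$ when $\re\alpha>2n-2$ (case 1, exponent $-\tfrac{n\alpha}2$), $s=n-j$ when $2n-4j-2<\re\alpha<2n-4j+2$ (case 2), and $s=0$ when $\re\alpha<-2n+2$ (case 3, exponent $+\tfrac{n\alpha}2$); the degenerate cases $d_1\sin\frac{\pi\alpha}2+d_2\cos\frac{\pi\alpha}2=0$ (all $A_{jk}=0$) and $d_2=0$ (all $B_{jk}=0$) force $s=n$ and $s=0$ respectively, matching the alternatives in cases 1 and 3. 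The relative corrections are $O(x^2)$, i.e.\ they only raise the exponent by positive integers, while $\re\alpha\notin2\mathbb Z$ keeps $s^\ast$ away from a half-integer, so the minimiser is unique and $\re E$ is strictly smallest there; hence no subdominant configuration can overtake the leading one — this is exactly where $\alpha\notin2\mathbb Z+i\mathbb R$ enters.

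It remains to evaluate the leading coefficient for the dominant value of $s$. The surviving permutations are precisely those carrying the $B$-rows bijectively onto the top block of columns and the $A$-rows onto the bottom block, so the coefficient factorises, up to an explicit sign, as a $j\times j$ determinant in the $A_{jk}$ times an $(n-j)\times(n-j)$ determinant in the $B_{jk}$. Each factor is a shifted reciprocal-Gamma determinant $\det\bigl(1/\Gamma(c+k-j+1)\bigr)$; writing $1/\Gamma(c+k-j+1)=p_j(k)/\Gamma(c+k+1)$ with $p_j$ monic of degree $j$ in $k$ reduces it to a Vandermonde times $\prod_k 1/\Gamma(c+k+1)$, giving the closed form $G(n+1)G(c+1)/G(c+n+1)$. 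Assembling the two blocks, using the reflection formula $\Gamma(z)\Gamma(1-z)=\pi/\sin\pi z$ to absorb the $n$ factors of $1/\sin\frac{\pi\alpha}2$ and to pass between the $+\frac\alpha2$ Gamma-products coming out of the determinant and the $-\frac\alpha2$ ones in the statement, and collecting the $(d_2/\pi)^n$ and $\bigl(\tfrac{d_1}{d_2}\sin\frac{\pi\alpha}2+\cos\frac{\pi\alpha}2\bigr)^j$ factors then produces each of the three formulas, including the signs $(-1)^{n(n+1)/2}$ and $(-1)^{n(n-1)/2+nj+n-j}$.

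The main obstacle is this final bookkeeping: getting every sign, trigonometric factor and Barnes-$G$ argument exactly right while converting between the two equivalent Gamma-product forms, and confirming that the leading coefficient is genuinely nonzero off the excluded loci. I would verify the algebra on $n=1,2$ before trusting the general case. I note that the same conclusion can be reached along the route announced in the abstract: represent $\Cy_\nu$ by a Schl\"afli-type contour integral $\frac1{2\pi i}\int_\gamma e^{\frac x2(t-1/t)}t^{-\nu-1}\,dt$, apply the Andr\'eief identity to rewrite $\Delta_n$ as a Vandermonde-squared multiple integral, and extract the $x\to0$ behaviour by rescaling the contours into the two regions $t\sim1/x$ and $t\sim x$; distributing the integration variables between the two regions reproduces the same exponent $E(s)$, and the ensuing Selberg-type integrals evaluate to the same Barnes-$G$ factors. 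Either way the combinatorial core is the identical competition of powers.
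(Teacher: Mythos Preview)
Your approach is more elementary than the paper's and largely valid, but one claim is false and needs repair. You assert that for each choice of $s$ $B$-rows the minimal power of $x$ is ``independent of which $s$ rows were chosen.'' It is not: after factoring $(x/2)^{\alpha/2-j}$ from each $A$-row and $(x/2)^{-\alpha/2+j}$ from each $B$-row, the minimal residual power for a subset $S$ of $B$-rows with $|S|=s$ works out to $\frac{\alpha}{2}(n-2s)+(n-s)(n-s-1)-2\sum_{j\notin S}j$, which depends on $S$. For $n=2$, $s=1$, the choice $S=\{0\}$ gives total power $-2$ while $S=\{1\}$ gives $0$. Your formula $E(s)=\frac{\alpha}{2}(n-2s)-2s(n-s)$ is only the \emph{minimum} over all $S$ of cardinality $s$, attained uniquely at $S=\{0,\dots,s-1\}$ (the bottom rows). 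This does not derail the argument---since only that single $S$ realises $E(s)$, the leading coefficient really is one product of two block determinants rather than a $\binom{n}{s}$-fold sum---but the justification must be stated correctly, and you should also verify that no configuration $(S',|S'|\ne s_c)$ can accidentally match the global minimum $E(s_c)$ once the $O(x^2)$ corrections are taken into account.

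With that fix your route works and differs genuinely from the paper's. The paper writes each cylinder function as a contour integral over $\Gamma_1\cup\Gamma_2$, applies the Andr\'eief identity to obtain an $n$-fold integral, expands over how many integration variables lie on each contour (their index $r$), rescales $t\mapsto \tfrac{2}{x}s$ on $\Gamma_1$ and $t\mapsto\tfrac{x}{2}s$ on $\Gamma_2$, and then evaluates the resulting factors as Laguerre Hankel determinants via Andr\'eief in reverse. Your direct expansion of the ascending series bypasses the integral machinery entirely and lands instead on the reciprocal-Gamma determinant identity; your $s$ is the paper's $n-r$, and indeed $E(s)=p(n-s,\alpha,n)$ in their notation, so the two competitions of powers are the same. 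The trade-off is that your argument is shorter and needs only the series for $J_{\pm\nu}$ plus one determinant evaluation, while the paper's contour-integral framework is better positioned to generalise (for instance to the large-$x$ asymptotics they flag as future work).
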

For discussion of the necessity of condition $\alpha\in\mathbb{C}\setminus (2\mathbb{Z}+\ii\mathbb{R})$, see Remarks \ref{rem:complex_alpha}, \ref{rem:integer_alpha}. We also address it in Appendix \ref{app:special_asymptotics}. 

The asymptotic formulas are obtained after rewriting the Toeplitz determinant \eqref{def:hankel_bessel_determinant} as multiple contour integral using Andr\`eief's identity and performing an elementary asymptotic analysis. The multiple contour integral formula holds only for $\re(x)>0$. However, we know that the power series for cylindrical functions is valid for $-\pi<\arg(x)<\pi$. Since our computation can be interpreted as a calculation of the leading term of the product of many power series, our asymptotic result also holds for $-\pi<\arg(x)<\pi$. Notice that plugging naively the asymptotic of Bessel functions into \eqref{eq:q_n_formula} and trying to derive Theorem \ref{thm:hankel_bessel_det_asymptotic} is a difficult task. One would have to reproduce different leading behaviors for different $\alpha$ and the mechanism for it is unclear to us.

We should mention that the same strategy was applied to special function solutions of the Painlev\'e-II equation in \cite{D} and to Hankel determinant solutions of Painlev\'e-VI in \cite{Chen_Zhang}.

Combining Theorem \ref{thm:hankel_bessel_det_asymptotic} with Proposition \ref{prop:q_n_formula} we derive the asymptotics of $u_n(x,\alpha)$ as $x\rightarrow 0$.
\begin{theorem}\label{thm:q_n_asymptotic} Solution \eqref{eq:q_n_formula} of the Painlev\'e-III equation \eqref{eq:q_n_painleve_equation}  admits the following  $x \rightarrow 0$, $-\pi<\arg(x)<\pi$ asymptotics for fixed $d_1,d_2\in \mathbb{C}$, $n\in\mathbb{N}\cup\{0\}$, $\alpha\in\mathbb{C}\setminus (2\mathbb{Z})$
\begin{enumerate}
    \item If $d_2\neq 0$ and $Re(\alpha)>2n+2 \text{ or }d_1\sin\left(\frac{\pi\alpha}{2}\right)+d_2\cos\left(\frac{\pi\alpha}{2}\right)=0$,  then $$u_n(x,\alpha)\sim \left(\frac{2}{2n+2-\alpha}\right)\frac{x}{2}, \text{ as }x\to 0,\quad  -\pi<\arg(x)<\pi.$$
    \item If $d_2\neq 0$, $d_1\sin\left(\frac{\pi\alpha}{2}\right)+d_2\cos\left(\frac{\pi\alpha}{2}\right)\neq 0$,  and $2n-4j<Re(\alpha)<2n-4j+2 \text{ for some }\\j=0, 1,\ldots,n$, then \begin{align*}u_n(x,\alpha)\sim &  (-1)^{n}\left(\frac{d_1}{d_2}\sin\left(\frac{\pi\alpha}{2}\right)+\cos\left(\frac{\pi\alpha}{2}\right)\right)\left(\frac{\Gamma(-\frac{\alpha}{2}+n-2j+1)}{\Gamma(\frac{\alpha}{2}-n+2j)}\right)^2\\& \times\frac{\Gamma(j+\frac{\alpha}{2})\Gamma(j+1)}{\Gamma(-\frac{\alpha}{2}+n-j+1)\Gamma(n-j+1)}\left(\frac{x}{2}\right)^{\alpha-2n+4j-1}, \text{ as }x\to 0,\quad -\pi<\arg(x)<\pi.\end{align*}
    \item If $d_2\neq 0$, $d_1\sin\left(\frac{\pi\alpha}{2}\right)+d_2\cos\left(\frac{\pi\alpha}{2}\right)\neq 0$,  and $2n-4j-2<Re(\alpha)<2n-4j \text{ for some }\\j=0, 1,\ldots,n-1$, then \begin{align*}u_n(x,\alpha)\sim & (-1)^{n} \left(\frac{d_1}{d_2}\sin\left(\frac{\pi\alpha}{2}\right)+\cos\left(\frac{\pi\alpha}{2}\right)\right)^{-1}\left(\frac{\Gamma(\frac{\alpha}{2}-n+2j+1)}{\Gamma(-\frac{\alpha}{2}+n-2j)}\right)^2\\&\times\frac{\Gamma(-\frac{\alpha}{2}+n-j+1)\Gamma(n-j)}{\Gamma(j+\frac{\alpha}{2}+1)\Gamma(j+1)}\left(\frac{x}{2}\right)^{-\alpha+2n-4j-1} ,\text{ as }x\to 0,\quad -\pi<\arg(x)<\pi.\end{align*}
    \item If $d_1\sin\left(\frac{\pi\alpha}{2}\right)+d_2\cos\left(\frac{\pi\alpha}{2}\right)\neq 0$,  and $Re(\alpha)<-2n \text{ or }d_2=0$, then $$u_n(x,\alpha)\sim\left(-\frac{\alpha}{2}-n\right)\left(\frac{x}{2}\right)^{-1}, \text{ as }x\to 0,\quad -\pi<\arg(x)<\pi.$$
\end{enumerate}
where $\Gamma(x)$ refers to the Gamma function.
\end{theorem}
\begin{figure*}[ht]
\begin{subfigure}[t]{0.4\textwidth}
\includegraphics[width=\textwidth]{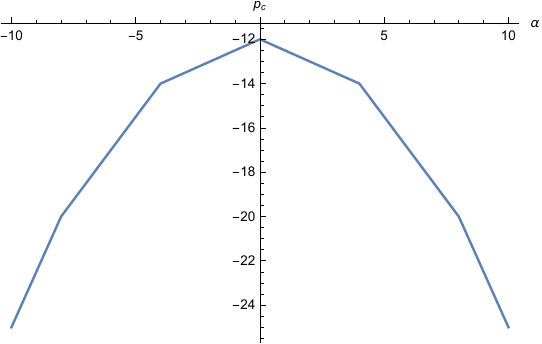}
\caption{Hankel determinant}
        \end{subfigure}
        \begin{subfigure}[t]{0.4\textwidth}
\includegraphics[width=\textwidth]{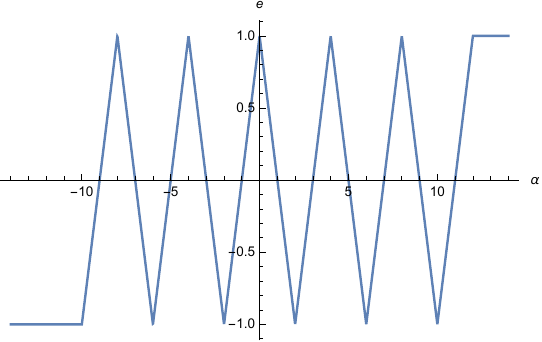}
\caption{Solution of the Painlev\'e-III equation.}
    \end{subfigure}
    \caption{Leading power in the asymptotics for $n=5$ as a function of $\alpha$. The piecewise expressions for $p_c(\alpha,n)$ and $e(\alpha,n)$ can be found in \eqref{eq:pc} and \eqref{eq:e}, respectively.}
        \label{fig:power_illustration}
        \label{fig:continuous_power}\end{figure*}
We can observe that the leading power of asymptotics in Theorems \ref{thm:hankel_bessel_det_asymptotic}, \ref{thm:q_n_asymptotic} is continuous as a function of $\alpha$, see the illustration for $n=5$ in Figure \ref{fig:continuous_power}. That indirectly confirms the validity of our computation. The plot also suggests that the solutions have a qualitatively different behavior for $-2n<\re(\alpha)<2n+2$ and for $\re(\alpha)>2n+2$ or for $\re(\alpha)<-2n$.

Another interesting observation is that in cases $(1)$ and $(4)$ of Theorem \ref{thm:q_n_asymptotic} the leading term of the asymptotic at zero does not fix the solution uniquely. To determine $\frac{d_1}{d_2}$ we would need to compute the subleading term.

We can compare Theorem \ref{thm:q_n_asymptotic} with the small $x$ asymptotics computed based on the monodromy data in \cite[Proposition 1.5]{BLMP}, see also \cite{Jimbo, kitaev87}. More precisely, consider $w_n(x,\alpha)=-\ii u_n(-2\ii x,\alpha)$. It solves the equation
\begin{align}
    \label{eq:v_n_painleve_equation}
w_n''=\dfrac{\left( w_n'\right)^2}{w_n}-\dfrac{w_n'}{x}  + \dfrac{(2\alpha+4n)w_n^2 + (2\alpha-4-4n)}{x}+4w_n^3-\frac{4}{w_n}.
\end{align}
According to \cite{BDM}, solution $w_n(x,\alpha)$ has Riemann--Hilbert representation given by \cite[RHP 4.1]{BLMP} with
\begin{align}\label{eq:monodromy_data}
&\mathbf{C}_{0\infty}^+=\mathbf{C}_{0\infty}^-=\begin{pmatrix}
        1&0\\2b_1&1
    \end{pmatrix},\quad \mathbf{S}_1^{\infty}=\mathbf{S}_1^{0}=\mathbb{1},\quad \Theta_0=\frac{\alpha}{2}+n,\quad \Theta_\infty=n+2-\frac{\alpha}{2},\\
    & \mathbf{S}_2^{\infty}=\begin{pmatrix}
        1&0\\2(b_1-b_2\ee^{\ii\pi\alpha})&1
    \end{pmatrix},\quad \mathbf{S}_2^{0}=\begin{pmatrix}
        1&0\\2\ee^{\ii\pi\alpha}(b_1-b_2)&1
    \end{pmatrix}.\label{eq:monodromy_data2}
\end{align}
where $b_1$ and $b_2$ are given by \eqref{def:b1b2}. The monodromy data corresponding to this solution is given by $e_1^{2}=e_\infty^{-2}=e_0^2=(-1)^n\ee^{\frac{\ii\pi\alpha}{2}}$, $e_2=1$, see \cite[Section 4]{BLMP}. We see that it does not satisfy the conditions \cite[Definition 1.3]{BLMP}. That means that we are filling the gap in the literature regarding the asymptotics of solutions of the Painlev\'e-III equation. For convenience of the reader we provide the derivation of formulas \eqref{eq:monodromy_data}, \eqref{eq:monodromy_data2} in Appendix \ref{app:rhp representation}.

Next, we present our results for large $x$ asymptotics.
\begin{theorem}\label{thm:hankel_bessel_det_asymptotic_at_infinity}
 The Toeplitz determinant \eqref{def:hankel_bessel_determinant} admits the following $x \rightarrow \infty,\,$ asymptotics for fixed $d_1,d_2\in \mathbb{C}$, $n\in\mathbb{N}\cup\{0\}$, $\alpha\in\mathbb{C}$.
\begin{enumerate}
    \item If  $d_1\pm\ii d_2\neq 0$, $n$ is even, and $x>0$, then $$\Delta_n(x,\alpha)\sim \left(\frac{d_1^2+d_2^2}{2\pi}\right)^{\frac{n}{2}}{}{}\left(G\left(\frac{n}{2}+1\right)\right)^2\left(\frac{x}{4}\right)^{-\frac{n^2}{4}}, \text{ as }x\to \infty,\quad x>0.$$
    \item If  $d_1\pm\ii d_2\neq 0$, $n$ is odd, and $x>0$, then \begin{align*}\Delta_n(x,\alpha)\sim & (-1)^{\frac{n-1}{2}}\left(\frac{d_1^2+d_2^2}{2\pi}\right)^{\frac{n}{2}}G\left(\frac{n+1}{2}\right)G\left(\frac{n+3}{2}\right)\sin\left(x-\phi+\frac{\pi}{4}(n-\alpha)\right)\left(\frac{x}{4}\right)^{-\frac{n^2+1}{4}}, \\&\text{as }x\to \infty,\quad x>0.\end{align*}
    \item  \label{thm: toeplitz_infinity_part_3}If  $d_1\pm\ii d_2\neq 0$, and $-\pi<\arg(x)<0$, then  \begin{align}\Delta_n(x,\alpha)\sim&{(d_1-\ii d_2)^{n}}{}\left(\frac{1}{2\pi}\right)^{\frac{n}{2}}\ee^{-\frac{\ii\pi n^2}{4}}G(n+1)\ee^{-\frac{\ii\pi n\alpha }{4}}\ee^{\ii n x}x^{-\frac{n^2}{2}},\quad x\to\infty\end{align}
        \item  If  $d_1\pm\ii d_2\neq 0$, and $0<\arg(x)<\pi$, then  \begin{align}\Delta_n(x,\alpha)\sim&{(d_1+\ii d_2)^{n}}{}\left(\frac{1}{2\pi}\right)^{\frac{n}{2}}\ee^{\frac{\ii\pi n^2}{4}}G(n+1)\ee^{\frac{\ii\pi n\alpha }{4}}\ee^{-\ii n x}x^{-\frac{n^2}{2}},\quad x\to\infty\end{align}
            \item  If  $d_1+\ii d_2=0$, and $-\pi<\arg(x)<\pi$, then  \begin{align}\Delta_n(x,\alpha)\sim&{(d_1-\ii d_2)^{n}}{}\left(\frac{1}{2\pi}\right)^{\frac{n}{2}}\ee^{-\frac{\ii\pi n^2}{4}}G(n+1)\ee^{-\frac{\ii\pi n\alpha }{4}}\ee^{\ii n x}x^{-\frac{n^2}{2}},\quad x\to\infty\end{align}
        \item \label{thm: toeplitz_infinity_part_6} If  $d_1-\ii d_2= 0$, and $-\pi<\arg(x)<\pi$, then  \begin{align}\Delta_n(x,\alpha)\sim&{(d_1+\ii d_2)^{n}}{}\left(\frac{1}{2\pi}\right)^{\frac{n}{2}}\ee^{\frac{\ii\pi n^2}{4}}G(n+1)\ee^{\frac{\ii\pi n\alpha }{4}}\ee^{-\ii n x}x^{-\frac{n^2}{2}},\quad x\to\infty\end{align}
\end{enumerate}
where $G(x)$ refers to the Barnes $G$-function and $\phi=\frac{1}{2\ii}\ln\left({d_1+\ii d_2}\right)-\frac{1}{2\ii}\ln\left({d_1-\ii d_2}\right)$.\\
\end{theorem}
\begin{theorem}\label{thm:q_n_asymptotic_at_infinity} Solution \eqref{eq:q_n_formula} of the Painlev\'e-III equation \eqref{eq:q_n_painleve_equation}  admits the following  $x \rightarrow \infty,\,x>0$ asymptotics for fixed $d_1,d_2\in \mathbb{C}$, $n\in\mathbb{N}\cup\{0\}$, $\alpha\in\mathbb{C}$,
\begin{enumerate}
    \item If $d_1\pm\ii d_2\neq 0$ and $n$ is even, then for some $M>0$ $$u_n(x,\alpha)\sim -\cot\left(x-\phi+\frac{\pi}{4}(n+1-\alpha)\right), \text{ as }x\to \infty,\quad x>0,\mbox{ and } |\cot(x-\phi+\frac{\pi}{4}(n+1-\alpha))|<M.$$
    \item If $d_1\pm\ii d_2\neq 0$ and $n$ is odd, then for some $M>0$ $$u_n(x,\alpha)\sim -\tan\left(x-\phi+\frac{\pi}{4}(n-\alpha)\right), \text{ as }x\to \infty,\quad x>0,\mbox{ and } |\tan(x-\phi+\frac{\pi}{4}(n-\alpha))|<M.$$
    \item If $d_1\pm\ii d_2\neq 0$ and $\im(x)\neq 0$, then we have 
    \begin{align}
  u_n(x,\alpha)-\ii\sim  \left(\frac{d_1-\ii d_2}{d_1+\ii d_2}\right)\frac{2^{2n-1}}{(n-1)!}\ee^{-\frac{\ii\pi}{2}(n+1+\alpha)}x^{n-1}\ee^{2\ii x },\quad x\to \infty,\quad 0<\arg (x)<\pi,\\
        u_n(x,\alpha)+\ii\sim  \left(\frac{d_1+\ii d_2}{d_1-\ii d_2}\right)\frac{2^{2n-1}}{(n-1)!}\ee^{\frac{\ii\pi}{2}(n-1+\alpha)}x^{n-1}\ee^{-2\ii x },\quad x\to \infty,\quad -\pi<\arg (x)<0.
    \end{align}
     \item If $d_1+\ii d_2= 0$ and then the asymptotics holds in larger domain $$u_n(x,\alpha)+\ii\sim \frac{1-\alpha}{2x},\quad x\to\infty ,\quad -\pi<\arg (x)<\pi$$
     Similarly for $d_1-\ii d_2= 0$ we get $$u_n(x,\alpha)-\ii\sim \frac{1-\alpha}{2x},\quad x\to\infty ,\quad -\pi<\arg (x)<\pi.$$
\end{enumerate}

\end{theorem}

Again we refer to the fact that the asymptotics of Hankel functions is valid for $-\pi<\arg(x)<\pi$ to extend our result from domain $\re(x)>0$ with available contour integral representation to the plane with a cut.  To get the asymptotic for $\arg(x)=\pm \pi$ one can use the analytic continuation formulas for cylindrical functions , see Appendix \ref{sec:analytic_continuation}. More precisely, one could replace $d_1, d_2$ with $d_1^\pm$, $d_2^\pm$ given by \eqref{eq:d+}, \eqref{eq:d-} and $x$ with $\ee^{\pm\ii\pi }x$ and directly use the result for $x>0$. We can notice that $d_1^{\pm}$ and $d_2^{\pm}$ start depending on $\alpha$ in that case, but that change does not affect our computation. This strategy also provides an alternative justification for our result in the domain $\re(x)<0$, where contour integral representation does not hold.

We should also mention that for $\alpha-1\in 2\mathbb{Z}$ and $d_1\pm\ii d_2=0$ the special function solutions reduce to the rational solutions considered in \cite{BMS18}. It can be seen from the asymptotic formulas \eqref{eq:hankel1_asym} and \eqref{eq:hankel2_asym} that truncate for described choice of $\alpha$.


The Proposition \ref{prop:q_n_formula} is useful for the numerical calculation of the solution $u_n(x,\alpha)$ through direct evaluation of the determinants. We also present a color plot for the argument of $u_n(x,\alpha)$ in the complex plane for various choices of $\alpha$ and $n$ in Figure \ref{fig:complex_illustration}.
We present the result for fixed $\alpha$ and large $n$ in Figure \ref{illustration}. We observe that the pole structure is similar to the pole structure of rational solutions of the Painlev\'e III equation observed in \cite{BMS18}, but the poles now also lie in the regions extending to infinity. The special case with $d_2=0$ can be found in Figure \ref{illustration2}, and the case with $d_1+\ii d_2=0$ can be found in Figure \ref{illustrationm1}. The other case of large $\alpha$ and large $n$ can be seen in Figure \ref{illustration3}. We see that the pole structure looks different in this case. Finally, in Figure \ref{illustration4}, we can see what happens when we take $\alpha$ much larger than $n$. The analysis of such pictures would require tools like the nonlinear steepest descent method for Riemann-Hilbert problems. The Jupyter notebook with presented plots can be found at \url{https://github.com/andrei-prokhorov/special-function-solutions-of-PIII.git}.
\begin{figure*}[ht]
\begin{subfigure}[t]{0.49\textwidth}
\centering\includegraphics[width=\textwidth]{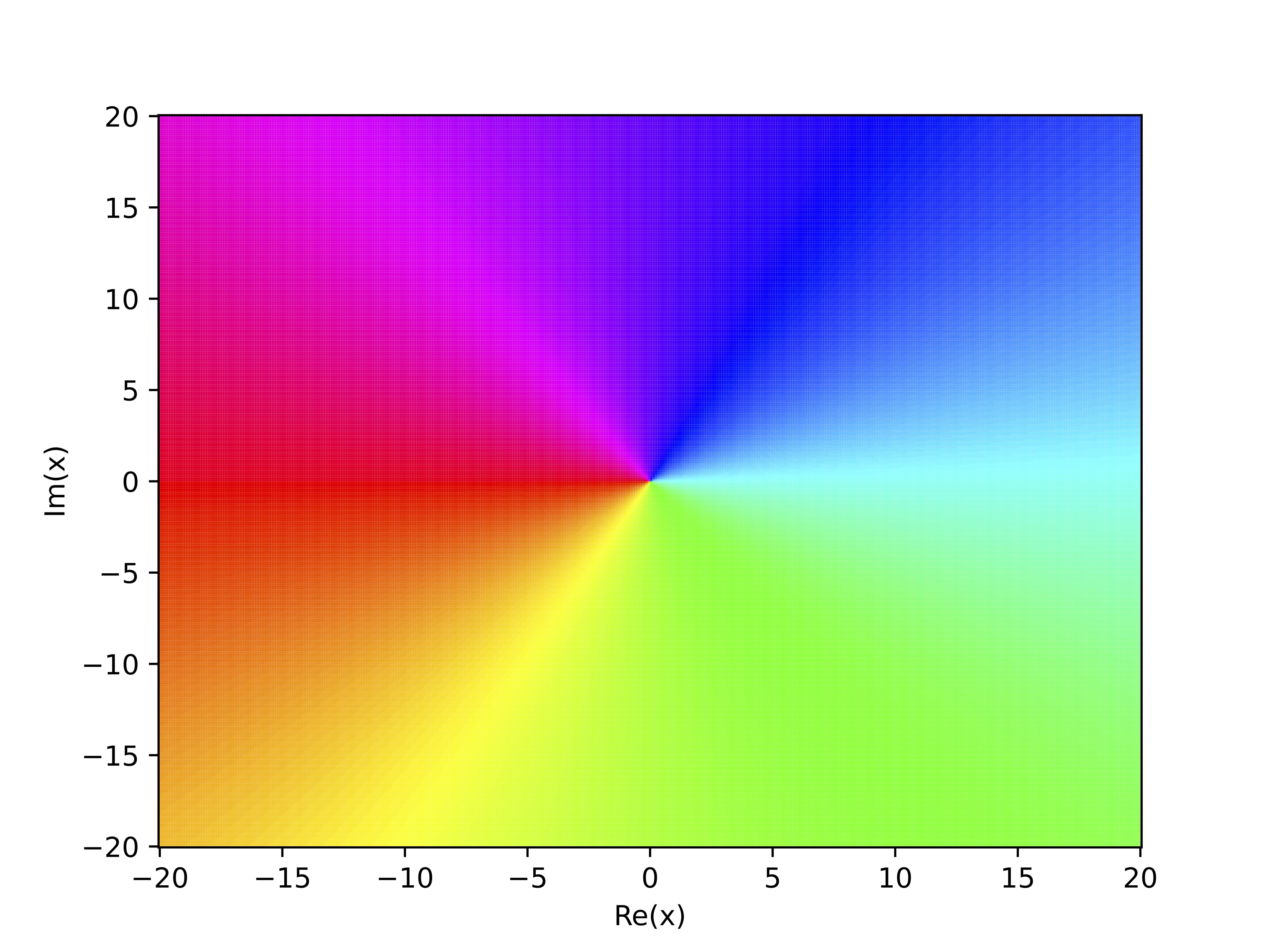}
\caption{Phase plot for the case $f(z)=z$.}\label{illustration0} 
     \end{subfigure}\hspace{0.1cm}
        \begin{subfigure}[t]{0.49\textwidth}
\includegraphics[width=\textwidth]{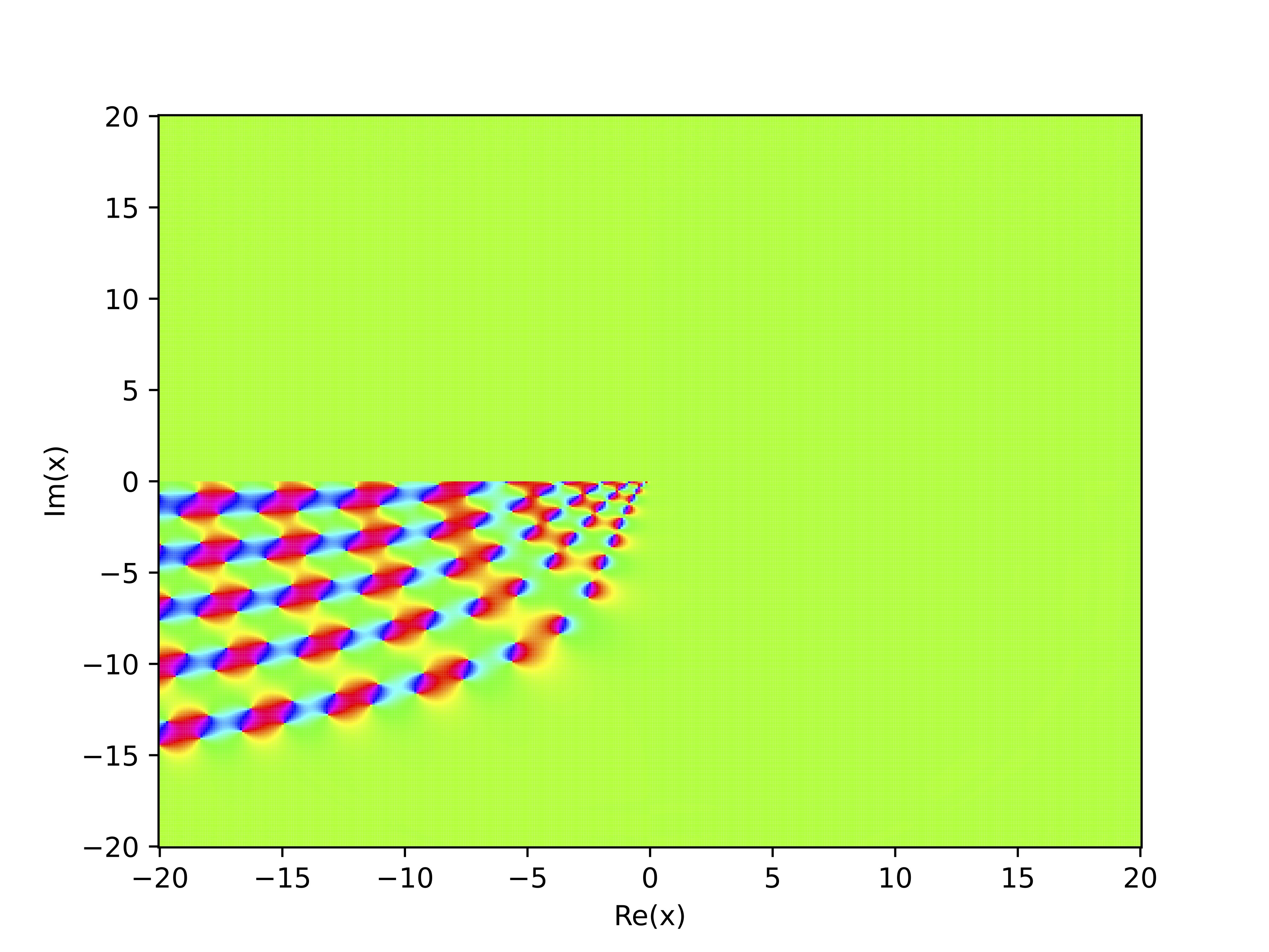}
\caption{Complex phase plot of solution $u_n(x,\alpha)$, for $n=10$ and $\alpha=1.08$, $d_1=0.55$, $d_2=0.55\ii$.}
      \label{illustrationm1}  \end{subfigure}\\
\begin{subfigure}[t]{0.49\textwidth}
\includegraphics[width=\textwidth]{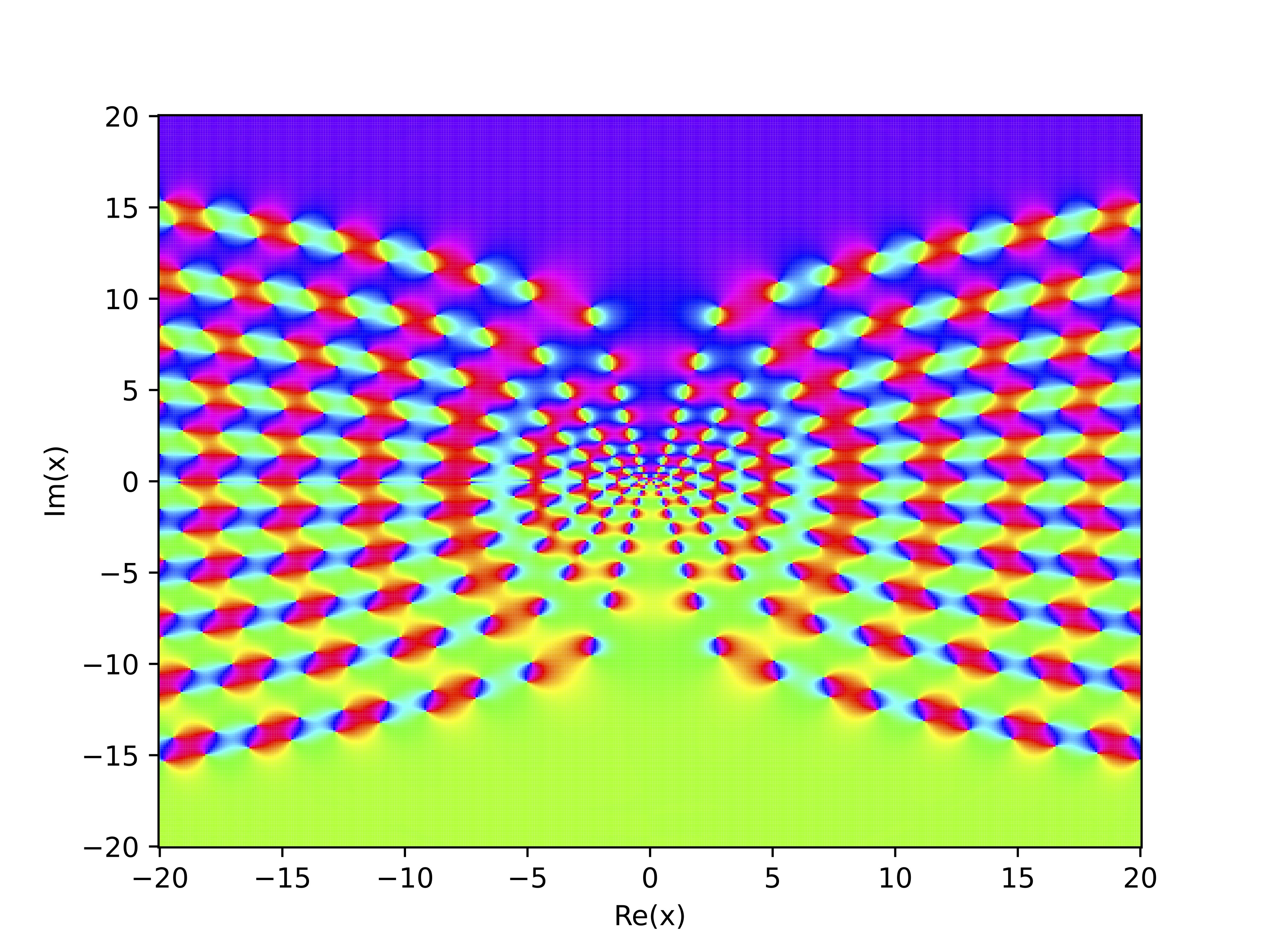}
\caption{Complex phase plot of solution $u_n(x,\alpha)$, for $n=10$ and $\alpha=1.08$, $d_1=0.55$, $d_2=0.71$.}
      \label{illustration}  \end{subfigure}\hspace{0.1cm}
        \begin{subfigure}[t]{0.49\textwidth}
\includegraphics[width=\textwidth]{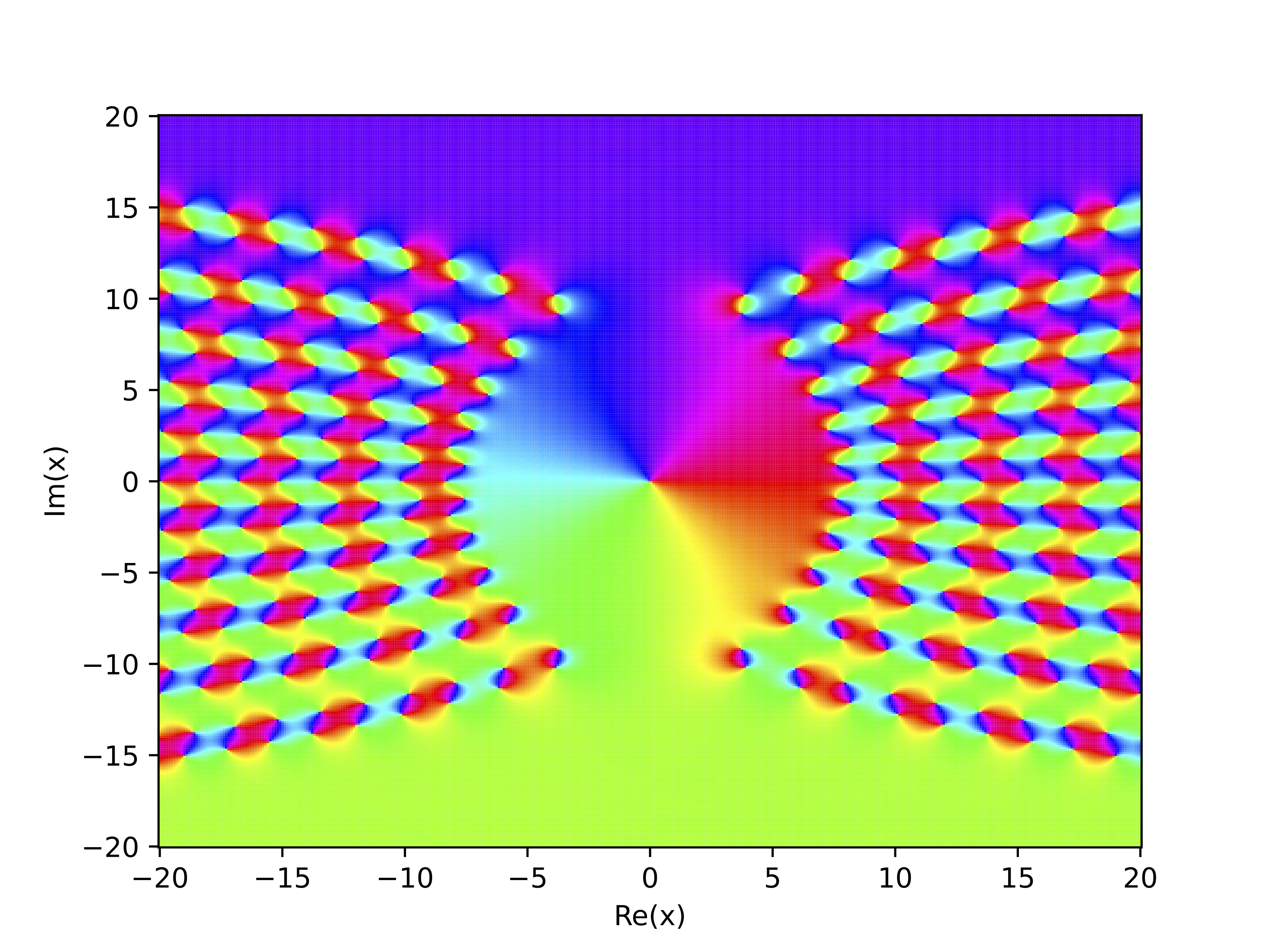}
\caption{Complex phase plot of solution $u_n(x,\alpha)$, for $n=10$ and $\alpha=1.08$, $d_1=0.55$, $d_2=0$.}
      \label{illustration2}  \end{subfigure}\\
        \begin{subfigure}[t]{0.49\textwidth}
\includegraphics[width=\textwidth]{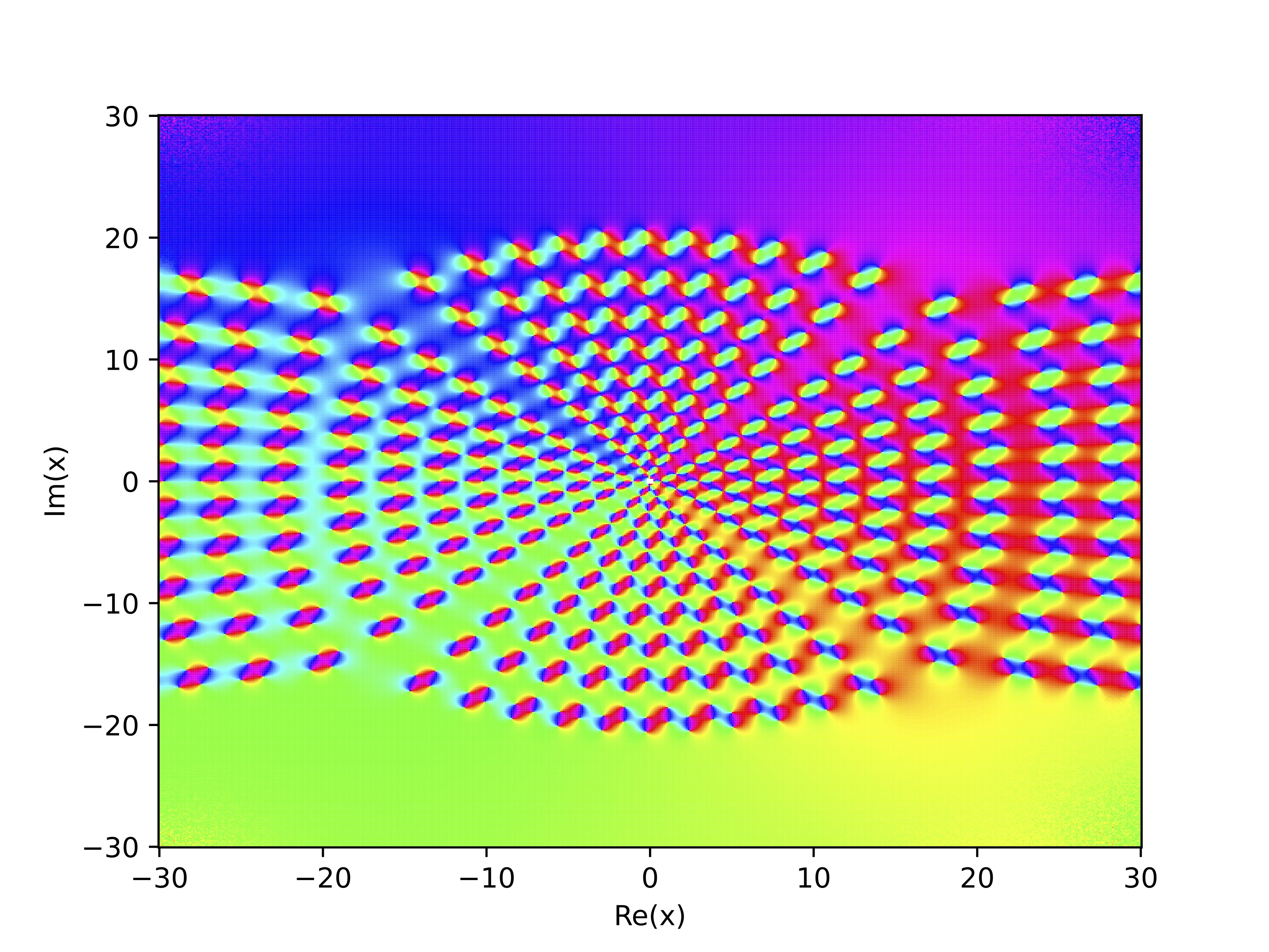}
\caption{Complex phase plot of solution $u_n(x,\alpha)$, for $n=10$ and $\alpha=23.04$, $d_1=0.55$, $d_2=0.71$.}
   \label{illustration3} \end{subfigure}\hspace{0.1cm}
   \begin{subfigure}[t]{0.49\textwidth}
\includegraphics[width=\textwidth]{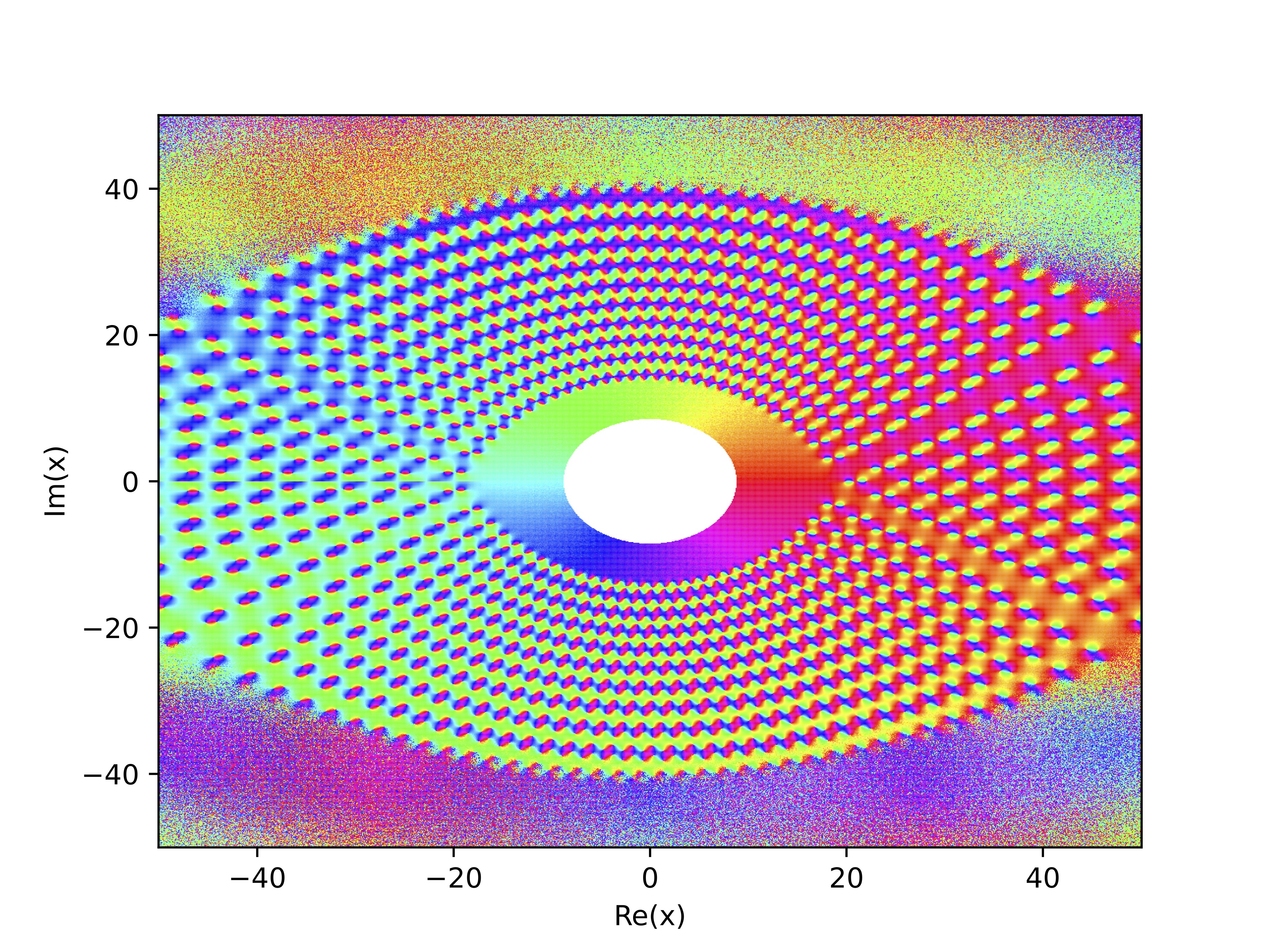}
\caption{Complex phase plot of solution $u_n(x,\alpha)$, for $n=10$ and $\alpha=80.04$, $d_1=0.55$, $d_2=0.71$.}
   \label{illustration4} \end{subfigure}
    \caption{Complex argument plots of solutions for various values of parameters. The color for each value of the argument can be found in Figure \ref{illustration0}}.
    \label{fig:complex_illustration}
        \end{figure*}

\subsection{Overview of the paper}
We start with the reminder of the construction of a special function solution using the Ricatti equation in Section \ref{sec:ricatti} and the generation of the family of special function solutions using the B\"acklund transformation in Section \ref{sec:backlund}. 

We prove representation for the special function solutions from Proposition \ref{prop:q_n_formula} in Section \ref{sec:prop_1_proof}. We start by introducing the tau function and demonstrating the classical fact that it satisfies the Toda equation in Section \ref{sec:tau_toda}. We use the Deshanot-Jacobi identity and the Toda equation to show the classical Wronskian formula for the tau function associated with the family of special function solutions in Section \ref{sec:linear-algebra}.
The crucial next step is the identification of the Toeplitz determinant \eqref{def:hankel_bessel_determinant} with the Painlev\'e tau function \eqref{def_tau} in the Proposition \ref{lem:tau-formula}, following \cite{FW03}, \cite{okamoto}. The main tools are differential identities \eqref{eq:diff-iden1}, \eqref{eq:diff-iden2}. After a long and tedious computation, we finish the proof of Proposition \ref{prop:q_n_formula}.

We prove Theorem \ref{thm:hankel_bessel_det_asymptotic} in Section \ref{sec:hankel_bessel_det_asymptotic}. We start by rewriting the Toeplitz determinant as a multiple contour integral using the Andr\`eief formula in Section \ref{sec:andreief}. We start with getting asymptotics for $x>0$. The key next step in the proof is splitting the multiple contour integral in the sum of other multiple contour integrals so that it is easy to compute the leading term of the asymptotics for the latter integrals. The result can be found in Lemma \ref{lem:sum_of_integrals}. We determine which integral has the largest leading term in Lemma \ref{prop:piecewise function of r_c}. The next step in the proof of Theorem \ref{thm:hankel_bessel_det_asymptotic} is the evaluation of the multiple contour integral corresponding to the leading term using formulas from \cite{DLMF}. Finally, we extend our  asymptotic formulas to the complex plane with $-\pi<\arg(x)<\pi$ by using the validity of asymptotic series of Bessel functions \eqref{eq:bessel_series} in that range. The alternative way of this extension using analytic continuation formulas is presented in the Appendix \ref{app:alternative_proof_thm_1.1}.

We prove Theorem \ref{thm:q_n_asymptotic} in Section \ref{sec:qn_asymptotics}. It consists of plugging in of result of Theorem \ref{thm:hankel_bessel_det_asymptotic} in Proposition \ref{prop:q_n_formula} and tedious manipulation with piecewise formulas.

We prove Theorem \ref{thm:hankel_bessel_det_asymptotic_at_infinity} in Section \ref{sec:toeplitz_asym_at_infty}. We rewrite the cylinder functions in terms of Hankel functions, which are more convenient for large $x$ asymptotics computation. We again start with the case $x>0$. We apply Andr\`eief identity and use the steepest descent method to determine the final result. The extension to complex plane is provided by the range of validity of asymptotic series of Hankel functions \eqref{eq:hankel1_asym}, \eqref{eq:hankel2_asym}. Alternative method using analytic continuation formulas is presented in Appendix \ref{app:alternative_proof_thm_1.3}. Theorem \ref{thm:q_n_asymptotic_at_infinity} is obtained in Section \ref{sec:solution_asym_at_infty} by combining Theorem \ref{thm:hankel_bessel_det_asymptotic_at_infinity} and Proposition \ref{prop:q_n_formula}. For some cases we compute additional error terms in Theorem \ref{thm:hankel_bessel_det_asymptotic_at_infinity} to get more meaningful expressions in Theorem \ref{thm:q_n_asymptotic_at_infinity}.

 In Appendix \ref{app:bessel} we derive the convenient contour integral representations \eqref{eq:cy_integral_representation}, \eqref{eq:alt_cy_integral_representation} for the cylinder function \eqref{def:f_nu}, present differential identities \eqref{eq:diff-iden1}, \eqref{eq:diff-iden2}, and analytic continuation formulas.

In Appendix \ref{app:rhp representation} we derive Riemann--Hilbert problem representation for solution \eqref{def:qn} and confirm formulas \eqref{eq:monodromy_data}, \eqref{eq:monodromy_data2} for the monodromy data.

In Appendix \ref{app:special_asymptotics} we determine asymptotics at zero for values of $\alpha$ missing from Theorems \ref{thm:hankel_bessel_det_asymptotic}, \ref{thm:q_n_asymptotic}. 
\section{Construction of Bessel function solutions of the Painlev\'e III equation}

\subsection{The simultaneous solutions of Ricatti and Painlev\'e III equations}\label{sec:ricatti}
The standard way to construct the special function solutions of Painlev\'e equations is to use a Ricatti equation, see \cite[Theorem 3.5]{C23} and \cite[\href{http://dlmf.nist.gov/32.10.iii}{\S 32.10(iii)}]{DLMF}. More precisely, we look for the simultaneous solutions of Painlev\'e III equation \eqref{eq:PIII} and the Ricatti equation
    \begin{equation}
\label{eq:ricatti}u'(x)=a(x)u^2(x)+b(x)u(x)+c(x),\quad a(x)\neq 0.
    \end{equation}
Taking the first derivative of \eqref{eq:ricatti} and plugging in the $u'(x)$, we get:
    \begin{equation*}     u''(x)=2a^2(x)u^3(x)+(a'(x)+3a(x)b(x))u^2(x)+(2a(x)c(x)+b^2(x)+b'(x))u(x)+(b(x)c(x)+c'(x))
        \end{equation*}
Meanwhile, plugging \eqref{eq:ricatti} into \eqref{eq:PIII}, we get:        \begin{align*}
            u''(x)=&(a^2(x)+1)u^3(x)+\left((2a(x)b(x)-\frac{a(x)-\alpha}{x}\right)u^2(x)+\left(b^2(x)+2a(x)c(x)-\frac{b(x)}{x}\right)u(x)\\&+\frac{c(x)-1}{u(x)}+2b(x)c(x)-\frac{c(x)-\beta}{x}.
        \end{align*}
    By matching and solving for the coefficients, we have four cases in total. We list them below:
    \begin{align}
        a(x)=1,&& b(x)=\frac{\alpha-1}{x},&& c(x)=1,&& \beta=2-\alpha\label{eq:case1}
\\
         a(x)=-1,&& b(x)=\frac{-1-\alpha}{x},&& c(x)=-1,&& \beta=-2-\alpha
 \label{eq:case2}\\
        a(x)=1,&& b(x)=\frac{\alpha-1}{x},&& c(x)=-1,&& \beta=\alpha-2\label{eq:case3}
\\
         a(x)=-1,&& b(x)=\frac{-1-\alpha}{x},&& c(x)=1,&& \beta=\alpha+2\label{eq:case4}
    \end{align}
    Notice that if $u(x)$ solves the Ricatti equation, then
$w(x)=\exp\left(-\int a(x)u(x)dx\right)$
    solves the following linear ODE:
    \begin{equation}
\label{eq:linear_ricatti1}    a(x)w''(x)-(a'(x)+a(x)b(x))w'(x)+c(x)a^{2}(x)w(x)=0,\quad a(x)\neq 0.
\end{equation}
   From now on, we will only focus on the case \eqref{eq:case1}.  Equation \eqref{eq:linear_ricatti1}  becomes     \begin{equation}\label{eq:linear_ricatti2}
         w{''}(x)+\frac{(1-\alpha)}{x}w{'}(x)+w(x)=0.
    \end{equation}
We can notice that $x^{-\frac{\alpha}{2}} w(x)$ solves the Bessel equation \eqref{eq:bessel} with $\nu=\frac{\alpha}{2}$. For $\alpha\in\mathbb{C}\setminus (2\mathbb{Z}+\ii\mathbb{R})$ we denote the solution of \eqref{eq:linear_ricatti2}
\begin{equation}\label{def:w}
w(x,\alpha)=x^{\frac{\alpha}{2}}\Cy_{\frac{\alpha}{2}}(x),
    \end{equation}
    where cylinder function $\Cy_{\nu}(x)$ is given by \eqref{def:f_nu}.
    Here we assume that $\frac{\alpha}{2}$ is not an integer for convenience in our future computations.
    As a result, we get the following.
    \begin{proposition}[\text{\cite[\href{http://dlmf.nist.gov/32.10.iii}{\S 32.10(iii)}]{DLMF}}]\label{prop:u_0}
Painlev\'e III equation \eqref{eq:PIII} with $\beta=2-\alpha$ and $\alpha\in\mathbb{C}\setminus (2\mathbb{Z}+\ii\mathbb{R})$ admits the special function solution
\begin{equation}\label{def:u_0}
         u_0(x,\alpha)=-\frac{d}{dx}\ln(w(x,\alpha)).
    \end{equation}
    with $w(x,\alpha)$ given by \eqref{def:w}.
    \end{proposition}

\begin{remark}
    In case \eqref{eq:case2}, the relevant solution is also given in terms of Bessel functions, while in the cases \eqref{eq:case3}, \eqref{eq:case4} it is given in terms of modified Bessel functions.
\end{remark}
\subsection{B\"acklund transformation}\label{sec:backlund}
To construct more solutions for the PIII equation with more general parameters, we need to introduce a powerful tool. B\"acklund transformations for the Painlev\'e-III equation
        are given by (see \cite[\href{http://dlmf.nist.gov/32.7.iii}{\S 32.7(iii)}]{DLMF})
        \begin{equation}\label{eq:backlund1}
            B_1: (u(x),\alpha,\beta)\to \left(\frac{xu'(x)+xu^2(x)-\beta u(x)-u(x)+x}{u(x)(xu'(x)+xu^2(x)+\alpha u(x)+u(x)+x)},\alpha+2,\beta+2\right)
        \end{equation}
        \begin{equation}\label{eq:backlund2}
            B_2: (u(x),\alpha,\beta)\to \left(-\frac{xu'(x)-xu^2(x)-\beta u(x)-u(x)+x}{u(x)(xu'(x)-xu^2(x)-\alpha u(x)+u(x)+x)},\alpha-2,\beta+2\right)
        \end{equation}
        
They are used as follows. Assume that $u(x)$ solves the Painlev\'e-III equation \eqref{eq:q_n_painleve_equation} and denote \\$B_1(u(x),\alpha,\beta)=(W(x),\alpha+2,\beta+2)$. Then $W(x)$ solves Painlev\'e-III equation
\begin{equation*}
            W''(x)=\dfrac{\left( W'(x)\right)^2}{W(x)}-\dfrac{W'(x)}{x}  + \dfrac{(\alpha+2) W^2(x) + (\beta+2)}{x}+W^3(x)-\frac{1}{W(x)}.
        \end{equation*}
Similarly, if we denote $B_2(u(x),\alpha,\beta)=(W(x),\alpha-2,\beta+2)$ then $W(x)$ solves Painlev\'e-III equation
\begin{equation*}
            W''(x)=\dfrac{\left( W'(x)\right)^2}{W(x)}-\dfrac{W'(x)}{x}  + \dfrac{(\alpha-2) W^2(x) + (\beta+2)}{x}+W^3(x)-\frac{1}{W(x)}.
        \end{equation*}

\begin{proposition}[\text{\cite[\href{http://dlmf.nist.gov/32.7.iii}{\S 32.7(iii)}]{DLMF}}]
    Denote $B_1^n(u_0(x,\alpha),\alpha,2-\alpha)=(u_n(x,\alpha),\alpha+2n,-\alpha+2+2n)$ with $u_0(x,\alpha)$ given by \eqref{def:u_0}. Then $u_n(x,\alpha)$ is the special function solution of Painlev\'e III equation \eqref{eq:q_n_painleve_equation}.
\end{proposition}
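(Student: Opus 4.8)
The plan is to argue by induction on $n$, leaning entirely on the fact—established in the discussion immediately preceding the statement—that the B\"acklund transformation $B_1$ sends a solution of Painlev\'e III with parameters $(\alpha,\beta)$ to a solution with parameters $(\alpha+2,\beta+2)$. Once this transformation property is granted, the whole content of the proposition reduces to two routine ingredients: checking the base case, and tracking the arithmetic of the parameter shifts. So I would not reprove the Riccati--Bäcklund machinery, but rather organize it into an induction.

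For the base case $n=0$ I would invoke the preceding Proposition, which states that $u_0(x,\alpha)$ given by \eqref{def:u_0} solves \eqref{eq:PIII} with $\beta=2-\alpha$. This is precisely equation \eqref{eq:q_n_painleve_equation} evaluated at $n=0$, since then $(\alpha+2n,-\alpha+2+2n)=(\alpha,2-\alpha)$. For the inductive step, I would assume that $u_k(x,\alpha)$ solves Painlev\'e III with parameters $(\alpha+2k,-\alpha+2+2k)$. By definition $u_{k+1}(x,\alpha)$ is the first component of $B_1\bigl(u_k(x,\alpha),\alpha+2k,-\alpha+2+2k\bigr)$, so the transformation property yields that the image pair is $\bigl(u_{k+1}(x,\alpha),\alpha+2k+2,-\alpha+4+2k\bigr)$ and that $u_{k+1}(x,\alpha)$ solves Painlev\'e III with parameters $(\alpha+2(k+1),-\alpha+2+2(k+1))$. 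Since both parameters increase by $2$ at every application, after $n$ iterations one lands exactly on $(\alpha+2n,-\alpha+2+2n)$, which are the coefficients appearing in \eqref{eq:q_n_painleve_equation}. This closes the induction and gives the claimed equation.

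The one point that genuinely requires care—and that I expect to be the main obstacle—is the well-definedness of each iterate: I must ensure that at no stage does the denominator $u(x)\bigl(xu'(x)+xu^2(x)+\alpha u(x)+u(x)+x\bigr)$ in \eqref{eq:backlund1} vanish identically, so that $u_{k+1}$ is a genuine meromorphic solution rather than an undefined expression. For the Bessel seed this can be controlled directly from the Riccati relation \eqref{eq:ricatti} with the data \eqref{eq:case1}, which expresses $u_0'$ through $u_0$ and thereby shows the relevant denominator is not identically zero; one then argues that the same non-degeneracy is inherited by the iterates (excluding at most the exceptional parameter values already removed by the hypothesis $\alpha\in\mathbb{C}\setminus(2\mathbb{Z}+i\mathbb{R})$). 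Finally, since $B_1$ acts by a rational operation on $u$ and $u'$, and $u_0$ is built from the cylinder functions $\Cy_{\alpha/2}$, each $u_n(x,\alpha)$ remains expressible through Bessel functions; hence it is indeed a \emph{special function} solution, as asserted.
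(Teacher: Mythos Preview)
Your proposal is correct and matches the paper's (implicit) argument: the paper states this proposition by citation to DLMF and does not write out a proof, relying instead on the immediately preceding remarks that $B_1$ sends a Painlev\'e III solution with parameters $(\alpha,\beta)$ to one with parameters $(\alpha+2,\beta+2)$, together with the base case for $u_0$. Your induction is exactly the unpacking of this, and your extra care about the non-vanishing of the $B_1$-denominator goes slightly beyond what the paper makes explicit.
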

We can observe that the parameters of the Painlev\'e-III equation \eqref{eq:q_n_painleve_equation} satisfy $\alpha+\beta\in 2+4\mathbb{N}$. We will use B\"acklund tranformations $B_2$. 
\begin{remark} \label{rem:modified_bessel} Using the transformation $u(x)\rightarrow -u(x)$ we can get solutions with  $\alpha+\beta\in-2-4\mathbb{N}$. Using the transformation $u(x)\rightarrow -\ii u(-\ii x)$ we can get solutions with  $\alpha-\beta \in 2+4\mathbb{N}$. 
\end{remark}

\section{Toeplitz determinants of cylinder functions}\label{sec:prop_1_proof}

\subsection{Hamiltonian system} We use the formulas presented in \cite{C23}.
    \begin{definition}
     We define the momentum associated to the solution of Painlev\'e-III equation using formula
        \begin{equation*}
           v(x)=\frac{1}{2u^2(x)}\left(xu'(x)+xu^2(x)- x+u(x)\left({\beta-1}{}\right)\right)
        \end{equation*}
\end{definition}
\begin{definition}
      We define the Hamiltonian associated with the solution of the Painlev\'e-III equation using formula
        \begin{equation*}
           H(x)={v^2(x)}{}u^2(x)-v(x)\left(xu^2(x)-{x}+u(x)\left({\beta-1}\right)\right)+2xu(x)\left(\frac{\beta-(2+\alpha)}{4}\right)
        \end{equation*}
\end{definition}
One can show that Painlev\'e-III equation is equivalent to the {Hamiltonian system}:
    \begin{equation*}
        x\dfrac{du}{dx}=\dfrac{\partial H}{\partial v},
    \end{equation*}
    \begin{equation*}
        \quad x\dfrac{dv}{dx}=-\dfrac{\partial H}{\partial u}.
    \end{equation*}
  
\subsection{Tau function and Toda equation} \label{sec:tau_toda}For details of this Section, see \cite{okamoto} and \cite{FW03}.
\begin{definition}\label{def:aux-ham}
We define the auxiliary Hamiltonian associated with the solution of the Painlev\'e-III equation using formula
        \begin{equation*}
            h(x)=\frac{1}{2}\left(H(x)+u(x)v(x)- x^2+\frac{1}{4}(\beta-4)(\beta+(\alpha-2))\right).
        \end{equation*}
\end{definition}
In this Section we will deal with a generic solution $u(x)$ of \eqref{eq:PIII} . Since momentum, Hamiltonian and auxiliary Hamiltonian are expressed in terms of $u(x)$, the action of the B\"acklund transformation $B_1$ can be extended to them by formulas $(u(x),\alpha,\beta)$ to $v(x)$, $H(x)$ and $h(x)$. We denote 
        \begin{align}
(u_n(x),\alpha+2n,\beta+2n)=B_1^n(u(x),\alpha,\beta)
       \label{def:qn}\\v_n(x)=\left.v(x)\right|_{u(x)\to u_n(x),\beta\to\beta+2n}
 \label{def:pn}   \\H_n(x)=\left.H(x)\right|_{u(x)\to u_n(x),v(x)\to v_n(x),\alpha\to\alpha+2n,\beta\to\beta+2n}
 \label{def:Hn}      \\h_n(x)=\left.h(x)\right|_{H(x)\to H_n(x),u(x)\to u_n(x),v(x)\to v_n(x),\alpha\to\alpha+2n,\beta\to\beta+2n}\label{def:hn}
        \end{align}
On the path to derive the representation of Proposition \ref{prop:q_n_formula} we introduce the tau function associated with the solution $u(x)$.
\begin{definition}
\label{def:tau-fauntion}      The tau function associated to the solution of Painlev\'e-III equation is defined using the formula
\begin{equation}\label{def_tau}
           x\dfrac{d}{dx}\ln(\tau_n(x))=h_n(x).
        \end{equation}
        It is defined up to a multiplicative constant.
\end{definition}

\begin{proposition}[\text{\cite[Proposition 4.2]{FW03}}]
      Tau function for the Painlev\'e-III equation given by \eqref{def_tau} satisfies {Toda equation} 
        \begin{equation*}
           x\dfrac{d}{dx}x\dfrac{d}{dx}\ln(\tau_n(x))=c_n\frac{\tau_{n+1}(x)\tau_{n-1}(x)}{\tau_n^2(x)}
        \end{equation*} 
    for some constants $c_n$. Moreover, multiplicative constants can be chosen in the definition \eqref{def_tau} so that $c_n= 1$.
    \end{proposition}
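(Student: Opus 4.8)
The plan is to reduce the Toda equation to a single identity about logarithmic derivatives, and then verify that identity by direct computation using the Hamiltonian flow together with the B\"acklund transformation $B_1$. Writing $\sigma_n(x)=x\frac{d}{dx}\ln\tau_n(x)=h_n(x)$, the left-hand side of the Toda equation is simply $x\frac{d}{dx}\sigma_n=xh_n'(x)$. On the other hand, applying $x\frac{d}{dx}\ln(\cdot)$ to the right-hand side and using the definition \eqref{def_tau} of the tau function gives
\begin{equation*}
x\frac{d}{dx}\ln\frac{\tau_{n+1}(x)\tau_{n-1}(x)}{\tau_n^2(x)}=h_{n+1}(x)+h_{n-1}(x)-2h_n(x).
\end{equation*}
Hence it suffices to prove the identity $x\frac{d}{dx}\ln\bigl(xh_n'(x)\bigr)=h_{n+1}(x)+h_{n-1}(x)-2h_n(x)$, for then $xh_n'$ and $\tau_{n+1}\tau_{n-1}/\tau_n^2$ share the same logarithmic derivative and therefore agree up to a multiplicative constant, which is the sought $c_n$.

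For the left-hand side I would compute $xh_n'$ from the Hamiltonian system. Since $x\frac{du}{dx}=\frac{\partial H}{\partial v}$ and $x\frac{dv}{dx}=-\frac{\partial H}{\partial u}$, the total derivative of $H$ along the flow collapses to its explicit $x$-dependence, $\frac{dH}{dx}=\frac{\partial H}{\partial x}$, and the derivative of the product $u_nv_n$ is likewise controlled by Hamilton's equations. Feeding these into Definition \ref{def:aux-ham} expresses $h_n'$, and then $x\frac{d}{dx}(xh_n')$, as explicit rational functions of $u_n,v_n,x$ and the shifted parameters.

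For the right-hand side I would use the B\"acklund transformation \eqref{eq:backlund1} to write $u_{n+1}$ as the rational function of $u_n,u_n',x$ recorded there, and its inverse $B_1^{-1}$ to write $u_{n-1}$; substituting these into the definitions of $v$, $H$, and $h$ with the parameter shifts \eqref{def:pn}--\eqref{def:hn} produces $h_{n\pm1}$ as rational functions of $u_n,u_n',x$. Forming the second difference $h_{n+1}+h_{n-1}-2h_n$ and simplifying, using \eqref{eq:q_n_painleve_equation} to eliminate $u_n''$, should reproduce $x\frac{d}{dx}\ln(xh_n')$. I expect this matching to be the main obstacle: both sides are large rational functions and the cancellation is delicate. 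The cleanest way to organize it is to first derive from the Hamiltonian system the second-order second-degree ($\sigma$-form) ODE satisfied by $h_n$, obtaining a closed algebraic relation among $h_n,h_n',xh_n''$ and the parameters, and then check that the B\"acklund-shifted Hamiltonians $h_{n\pm1}$ solve the same relation with shifted parameters; the Toda identity then follows as an algebraic consequence rather than a brute-force expansion.

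Finally, for the normalization $c_n=1$: since each $\tau_n$ is fixed by \eqref{def_tau} only up to a multiplicative constant, replacing $\tau_n\mapsto\lambda_n\tau_n$ leaves $h_n$ unchanged but multiplies $\tau_{n+1}\tau_{n-1}/\tau_n^2$ by $\lambda_{n+1}\lambda_{n-1}/\lambda_n^2$, so $c_n\mapsto\bigl(\lambda_n^2/(\lambda_{n+1}\lambda_{n-1})\bigr)c_n$. Choosing $\log\lambda_n$ to solve the linear second-order recurrence $\log\lambda_{n+1}+\log\lambda_{n-1}-2\log\lambda_n=\log c_n$, which is always solvable, rescales every constant to $c_n=1$.
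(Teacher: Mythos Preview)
Your proposal is correct and follows essentially the same route as the paper: both reduce the claim to the equality of logarithmic derivatives $\frac{d}{dx}\ln(xh_n')=\frac{d}{dx}\ln\bigl(\tau_{n+1}\tau_{n-1}/\tau_n^2\bigr)$, verify it via the Hamiltonian flow and the B\"acklund transformation $B_1$, and then handle the normalization by solving the second-order linear recurrence for the multiplicative constants. The only shortcut the paper adds is the intermediate identity $h_{n+1}(x)=h_n(x)-u_n(x)v_n(x)+\text{const}$, which collapses your second difference $h_{n+1}+h_{n-1}-2h_n$ to $(v_{n-1}u_{n-1}-v_nu_n+2)/x$ before the final rational-function check.
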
  
    \begin{proof}
    Using the B\"acklund transformation $B_1$ we can check the identity
        \begin{equation}\label{eq:identity_1}
            h_{n+1}(x)=h_n(x)-v_{n}(x)u_{n}(x)-\frac{3}{2}+\frac{\alpha}{4}-\frac{3\beta}{4}+2n
        \end{equation}   
   Denote $A_n(x)=x\dfrac{d}{dx}x\dfrac{d}{dx}\ln(\tau_n(x))$ and $B_n(x)=\frac{\tau_{n+1}(x)\tau_{n-1}(x)}{\tau_n^2(x)}$. We want to show that $A_n(x)=c_nB_n(x)$. Taking a natural log on both sides, we get $\ln A_n(x)=\ln B_n(x)+\ln c_n$. Therefore, it is sufficient to show \begin{equation}\dfrac{d}{dx}(\ln A_n(x)-\ln B_n(x))=0.\label{eq:identity_2}\end{equation}
    Well, using Definition \ref{def:tau-fauntion} and identity \eqref{eq:identity_1}, we have:
    \begin{equation}
        \dfrac{d}{dx}\ln (B_n(x))=\frac{v_{n-1}(x)u_{n-1}(x)-v_n(x)u_n(x)+2}{x},\label{eq:g}
    \end{equation}
     \begin{equation}
        \dfrac{d}{dx}\ln (A_n(x))=\frac{h'_n(x)+xh''_n(x)}{xh'_n(x)}.\label{eq:f} 
    \end{equation}
    Using Definitions \ref{def:aux-ham} and \eqref{def:qn}--\eqref{def:hn}, we rewrite \eqref{eq:f}, \eqref{eq:g} in terms of $u_n(x)$. After a long computation, we obtain \eqref{eq:identity_2}.
    
    Let us show that using transformation $\tau_n(x)\to a_n\tau_n(x)$ one can make sure that the constant $c_n$ in the Toda equation is $1$. We notice that for that to happen $a_n$ has to satisfy a difference equation 
        \begin{equation*}
            c_n^{-1}a_n^2=a_{n+1}a_{n-1}.
        \end{equation*}   
    Its general solution is given by
        \begin{equation*}
            a_n=\frac{a_1^n}{a_0^{n-1}}\prod_{j=1}^{n-1}\prod_{i=1}^jc_i^{-1},\ \ n\in \mathbb{N}.
        \end{equation*} 
    We can pick the initial conditions $a_0=a_1=1$ and choose
        \begin{equation*}
            a_n=\prod_{j=1}^{n-1}\prod_{i=1}^jc_i^{-1},\ \ n\in \mathbb{N}.
        \end{equation*}
    \end{proof}

\subsection{Wronskian solutions of Toda equation}\label{sec:linear-algebra}
Toda equation
 \begin{equation*}
           x\dfrac{d}{dx}x\dfrac{d}{dx}\ln(\tau_n(x))=\frac{\tau_{n+1}(x)\tau_{n-1}(x)}{\tau_n^2(x)}
        \end{equation*} 
determines the tau function recursively given initial conditions. If we want to derive some nice formula for it, we need some properties of the determinants.
     The Leibniz formula for the determinant of $n\times n $ matrix $A=\left\{a_{ij}\right\}_{i,j=1}^n$ is given by
     \begin{equation*}
           \det(A)=\sum_{\sigma\in S_n}\mathrm{sgn}(\sigma)\prod_{k=1}^na_{k,\sigma(k)},
        \end{equation*}
        where $S_n$ is the set of permutations of $n$ elements and $\mathrm{sgn}(\sigma)$ is a sign of permutation $\sigma$.
Directly using the Leibniz formula above, we can show the following formulas for the derivative of a determinant
\begin{equation}\label{eq:det_deriv_row}
           \dfrac{d}{dx}\det(A)=\sum_{j=1}^n\sum_{\sigma\in S_n}\mathrm{sgn}(\sigma)\left(\dfrac{d}{dx}a_{j,\sigma(j)}\right)\prod_{\substack{k=1\\k\neq j}}^na_{k,\sigma(k)}.
        \end{equation}
        Remembering that $\det(A^T)=\det(A)$ we can write the alternative formula
        \begin{equation}\label{eq:det_deriv_column}
           \dfrac{d}{dx}\det(A)=\sum_{j=1}^n\sum_{\sigma\in S_n}\mathrm{sgn}(\sigma)\left(\dfrac{d}{dx}a_{\sigma(j),j}\right)\prod_{\substack{k=1\\k\neq j}}^na_{\sigma(k),k}.
        \end{equation}
    Denote by $A_{i|j}$ the matrix obtained from $A$ by deleting its $i$th row and $j$th column.  The Laplace expansion for the determinant along the $j$th row is given by
    \begin{equation*}
        \det(A)=\sum_{k=1}^{n}(-1)^{k+j}a_{jk}\det(A_{j|k}).
    \end{equation*}
\begin{proposition}[see \cite{VV}]\label{thm:deshanot-jacobi}
    Denote $A_{ij|kl}$ the matrix obtained from $A$ by deleting the $i$th and $j$th rows and $k$th and $l$th columns. Determinants of these matrices satisfy Deshanot-Jacobi identity
        \begin{equation}   \label{eq:deshanot_jacobi}  \det(A)\det(A_{ij|ij})=\det(A_{i|i})\det(A_{j|j})-\det(A_{i|j})\det(A_{j|i}),\quad 1\leq i,j\leq n.         \end{equation}
\end{proposition}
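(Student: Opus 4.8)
The plan is to recognize the right-hand side of \eqref{eq:deshanot_jacobi} as a $2\times 2$ minor of the cofactor matrix of $A$, and then to express minors of the cofactor matrix through complementary minors of $A$ by means of the fundamental relation $A\cdot\mathrm{adj}(A)=\det(A)I$. Write $C_{kl}=(-1)^{k+l}\det(A_{k|l})$ for the $(k,l)$ cofactor, so that $\mathrm{adj}(A)=C^{T}$. Because $(-1)^{2(i+j)}=1$, the two cross terms in the $2\times 2$ determinant
\begin{equation*}
\det\begin{pmatrix} C_{ii} & C_{ij}\\ C_{ji} & C_{jj}\end{pmatrix}=\det(A_{i|i})\det(A_{j|j})-\det(A_{i|j})\det(A_{j|i})
\end{equation*}
reproduce exactly the right-hand side of \eqref{eq:deshanot_jacobi}. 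Thus it remains to prove that this $2\times 2$ minor of the cofactor matrix equals $\det(A)\det(A_{ij|ij})$.

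First I would reduce to the invertible case. Both sides of \eqref{eq:deshanot_jacobi} are polynomials in the entries $a_{kl}$, so it suffices to verify the identity on the Zariski-dense set where $\det(A)\neq 0$; the general case then follows by continuity (the polynomial identity principle). On this set $\mathrm{adj}(A)=\det(A)\,A^{-1}$, hence $C_{kl}=\det(A)\,(A^{-1})_{lk}$, and substituting this into the $2\times 2$ determinant above shows that it equals $\det(A)^2$ times the $2\times 2$ minor of $A^{-1}$ supported on rows and columns $\{i,j\}$.

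The analytic heart is the formula for the minors of an inverse matrix: for invertible $A$ and index sets $I,J$ of equal size,
\begin{equation*}
\det\bigl((A^{-1})[I\,|\,J]\bigr)=(-1)^{\sum_{p\in I}p+\sum_{q\in J}q}\,\frac{\det\bigl(A[\bar J\,|\,\bar I]\bigr)}{\det(A)},
\end{equation*}
where $A[I\,|\,J]$ denotes the submatrix of $A$ on rows $I$ and columns $J$, and $\bar{\cdot}$ is the complementary index set. I would establish this by Cramer's rule together with Laplace expansion along the selected rows (equivalently, one may cite it as Jacobi's theorem on the minors of the adjugate, as in \cite{VV}). Applying it with $I=J=\{i,j\}$, the sign $(-1)^{2(i+j)}$ equals $+1$, the complementary sets coincide, $\bar I=\bar J=\{i,j\}^{c}$, and $A[\{i,j\}^{c}\,|\,\{i,j\}^{c}]=A_{ij|ij}$, so the $2\times 2$ minor of $A^{-1}$ equals $\det(A_{ij|ij})/\det(A)$. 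Multiplying by the factor $\det(A)^2$ from the previous step produces $\det(A)\det(A_{ij|ij})$, as required.

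The main obstacle is the careful sign bookkeeping in the minors-of-the-inverse formula, where one must track the parity of $\sum I+\sum J$ together with the positions of the deleted rows and columns in the complementary minor; the symmetric choice $I=J=\{i,j\}$ is precisely what makes every sign cancel and keeps the final statement clean. Everything else, namely the reduction to $\det(A)\neq 0$ and the passage from cofactors to the inverse, is routine.
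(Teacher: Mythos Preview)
Your argument is correct: recognizing the right-hand side as a $2\times 2$ minor of the cofactor matrix and then invoking Jacobi's theorem on the minors of the adjugate (equivalently, the minors-of-the-inverse formula) is a standard and valid route to the Desnanot--Jacobi identity, and the sign bookkeeping with $(-1)^{2(i+j)}=1$ works exactly as you describe. The paper itself does not supply a proof of this proposition at all; it merely states the identity and refers the reader to \cite{VV}, so there is no ``paper's own proof'' to compare against, and your proposal would serve as a perfectly appropriate fill-in.
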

\begin{proposition}[\text{\cite[(2.43)]{FW03}}]\label{prop:fn_sol}
The sequence of functions \begin{equation}\label{def:fn}
           f_n(x)=\det\left(\left\{\left(x\dfrac{d}{dx}\right)^{i+j}f_0(x)\right\}_{i,j=0}^{n}\right)
        \end{equation}
with infinitely differentiable $f_0(x)$ solves Toda equation corresponding to Painlev\'e-III equation
\begin{equation}\label{def:fn_toda}
           \left(x\dfrac{d}{dx}\right)^2\ln(f_n(x))=\frac{f_{n+1}(x)f_{n-1}(x)}{f_n^2(x)},\quad n\geq 1.
        \end{equation}
\end{proposition}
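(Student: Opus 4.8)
The plan is to reduce the Toda equation to its bilinear (Hirota) form and then recognize every object appearing in it as a minor of a single Hankel matrix, at which point the identity becomes an instance of the Desnanot--Jacobi relation \eqref{eq:deshanot_jacobi}. Write $D := x\frac{d}{dx}$, which is a derivation since $D(fg) = (Df)g + f(Dg)$, and set $s_k := D^k f_0$, so that $f_n = \det\big((s_{i+j})_{i,j=0}^n\big)$ is a Hankel determinant and $D s_k = s_{k+1}$. Because $D^2\ln f_n = \frac{f_n D^2 f_n - (Df_n)^2}{f_n^2}$, the claim \eqref{def:fn_toda} is equivalent to the bilinear identity
$$ f_n\, D^2 f_n - (Df_n)^2 = f_{n+1} f_{n-1}. $$

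First I would compute $Df_n$ and $D^2 f_n$ as minors of the larger Hankel matrix $A := (s_{i+j})_{i,j=0}^{n+1}$, whose determinant is $f_{n+1}$. Applying the row-differentiation formula \eqref{eq:det_deriv_row} to $f_n$ and using $D s_{i+j} = s_{i+j+1}$, differentiating the $i$-th row turns it into the $(i+1)$-st row; for $i<n$ this produces two identical rows and hence a vanishing term, so only the differentiation of the last row survives. This identifies $Df_n$ with the minor $A_{n|n+1}$ of $A$ obtained by deleting row $n$ and column $n+1$ (equivalently, by the symmetry of the Hankel matrix, with $A_{n+1|n}$ via \eqref{eq:det_deriv_column}). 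Differentiating this expression once more --- now most conveniently by columns via \eqref{eq:det_deriv_column} --- the same cancellation leaves a single surviving term, which I would identify with the central minor $A_{n|n}$ (delete row $n$ and column $n$); thus $D^2 f_n = \det A_{n|n}$.

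With these identifications in hand, I would apply the Desnanot--Jacobi identity \eqref{eq:deshanot_jacobi} to the $(n+2)\times(n+2)$ matrix $A$ with the deleted indices $i=n$, $j=n+1$. The five resulting minors are exactly $\det A = f_{n+1}$, $\det A_{n,n+1|n,n+1} = f_{n-1}$, $\det A_{n+1|n+1} = f_n$, $\det A_{n|n} = D^2 f_n$, and $\det A_{n|n+1} = \det A_{n+1|n} = Df_n$, so the identity reads $f_{n+1}f_{n-1} = f_n\,D^2 f_n - (Df_n)^2$, which is precisely the bilinear form above. This proves \eqref{def:fn_toda} for $n\geq 1$.

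The routine but delicate part --- and the step I expect to require the most care --- is the bookkeeping in the previous paragraph: verifying that after differentiation the surviving matrices are the specific minors $A_{n|n+1}$ and $A_{n|n}$ of $A$ (rather than merely Hankel determinants of shifted sequences), and that the Hankel symmetry correctly matches $A_{n|n+1}$ with $A_{n+1|n}$ so that the cross terms in \eqref{eq:deshanot_jacobi} combine into $(Df_n)^2$. Once the correspondence between the two differentiations and the choice of deleted rows and columns is pinned down, the rest is a direct substitution into \eqref{eq:deshanot_jacobi}.
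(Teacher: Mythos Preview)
Your proposal is correct and follows essentially the same route as the paper's proof: both rewrite the Toda equation in bilinear form, identify $f_n$, $Df_n$, $D^2f_n$, $f_{n\pm1}$ as the four corner minors and the double minor of the $(n+2)\times(n+2)$ Hankel matrix (via row- then column-differentiation and the Hankel symmetry), and finish with Desnanot--Jacobi \eqref{eq:deshanot_jacobi}. The only difference is indexing conventions (you use $0$-based indices $A_{n|n+1}$, $A_{n|n}$; the paper uses $1$-based $A_{n+1|n+2}$, $A_{n+1|n+1}$), which is immaterial.
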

\begin{proof}
Specifically, to match the expression in Proposition \ref{thm:deshanot-jacobi}, we rewrite \eqref{def:fn_toda} as:
    \begin{equation*}
        f_{n-1}(x)f_{n+1}(x)=f_n(x)\left(x\dfrac{d}{dx}\right)^2f_n(x)-\left(x\dfrac{d}{dx}f_n(x)\right)^2,\quad n\geq 1
    \end{equation*}
    Put $f_{n+1}(x)=\det(A)$. It follows that $f_n(x)=\det(A_{n+2|n+2})$ and $f_{n-1}(x)=\det(A_{n+1,n+2|n+1,n+2})$. We take the first derivative of the determinant in \eqref{def:fn} by multilinearity with respect to rows using \eqref{eq:det_deriv_row}. Since a determinant with two identical rows is zero, we end up with $x\dfrac{d}{dx}f_n(x)=\det(A_{n+1|n+2})$. Since $A_{n+1|n+2}=(A_{n+2|n+1})^{T}$, this implies that $x\dfrac{d}{dx}f_n(x)=\det(A_{n+1|n+2})=\det(A_{n+2|n+1})$. Then we take the second derivative of the determinant in \eqref{def:fn} successively by multilinearity with respect to columns using \eqref{eq:det_deriv_column}. Similarly, since a determinant with two identical columns is zero, we end up with $\left(x\dfrac{d}{dx}\right)^2f_n(x)=\det(A_{n+1|n+1})$. Using \eqref{eq:deshanot_jacobi}, we obtain \eqref{def:fn_toda}.
\end{proof}
Now, let us return to the special function solutions $u_n(x,\alpha)$.  We compute the corresponding auxiliary Hamiltonians $h_0(x,\alpha)$, $h_1(x,\alpha)$, and $h_2(x,\alpha)$. It turns out that corresponding tau functions can be chosen as
        \begin{equation}\label{eq:tau_0}
           \tau_0(x,\alpha)=1,
        \end{equation}
        \begin{equation}\label{eq:tau_1}
           \tau_1(x,\alpha)=\Cy_\frac{\alpha}{2}(x),
        \end{equation}
        \begin{equation}\label{eq:tau_2}
           \tau_2(x,\alpha)=\det\left(\begin{array}{cc}
   \tau_1(x,\alpha)  & x\dfrac{d}{dx}\tau_1(x,\alpha) \vspace{0.2cm}\\
  x\dfrac{d}{dx}\tau_1(x,\alpha)   & \left(x\dfrac{d}{dx}\right)^2\tau_1(x,\alpha)
\end{array}\right).
        \end{equation}
where $\Cy_\nu(x)$ is given by \eqref{def:f_nu}. It indicates that special function solutions can be represented using determinants as in Proposition \ref{prop:fn_sol} which is not true for arbitrary family of B\"acklund iterates. Moreover, it is the only family of solutions with this property. More specifically, condition \eqref{eq:tau_0} puts the restriction on parameters $\alpha$ and $\beta$ and imposes the Bessel differential equation for $\tau_1(x,\alpha)$, see \cite[Proposition 4.3]{FW03}. After that, the Toda equation determines the tau function uniquely given the initial condition, which produces determinantal formulas \eqref{eq:tau_2}, \eqref{eq:tau_first_det_formula}, see \cite[(2.43)]{FW03}. To summarize the solution with initial conditions \eqref{eq:tau_1}, \eqref{eq:tau_2} is given by \eqref{def:fn} in Proposition \ref{prop:fn_sol}. As a result, we get
\begin{proposition} Tau functions associated with the special function solutions $u_n(x,\alpha)$ can be chosen as
\begin{equation}\label{eq:tau_first_det_formula}
\tau_n(x,\alpha)=\det\left(\left\{\left(x\dfrac{d}{dx}\right)^{i+j-2}\tau_1(x,\alpha)\right\}_{i,j=1}^{n}\right)
        \end{equation}
        with $\tau_1(x,\alpha)=\Cy_\frac{\alpha}{2}(x)$, $\tau_0(x,\alpha)=1$.
\end{proposition}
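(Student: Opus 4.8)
The plan is to recognize the right-hand side of \eqref{eq:tau_first_det_formula} as the Wronskian-type solution of the Toda equation supplied by Proposition \ref{prop:fn_sol}, and then to pin down the actual tau functions by the uniqueness of the Toda recursion once two consecutive initial values are fixed. Write $D_n(x)$ for the right-hand side of \eqref{eq:tau_first_det_formula}, with $D_0=1$; the goal is to show $\tau_n(x,\alpha)=D_n(x)$ for all $n$.

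First I would reconcile the indexing between \eqref{def:fn} and \eqref{eq:tau_first_det_formula}. Setting $f_0(x)=\tau_1(x,\alpha)=\Cy_{\frac{\alpha}{2}}(x)$ in \eqref{def:fn} and applying the shift $i\mapsto i-1$, $j\mapsto j-1$ turns the $(n+1)\times(n+1)$ determinant $f_n$ into the $n\times n$ determinant $D_{n+1}$, so that $D_n=f_{n-1}$ for $n\geq 1$. In particular $D_1=f_0=\Cy_{\frac{\alpha}{2}}(x)$ reproduces \eqref{eq:tau_1}, and expanding the $2\times 2$ determinant $D_2=f_1$ reproduces \eqref{eq:tau_2}; together with $D_0=1$ this matches \eqref{eq:tau_0}.

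Next I would record that $D_n$ solves the Toda equation with unit constant. Proposition \ref{prop:fn_sol} gives \eqref{def:fn_toda}, namely $\left(x\frac{d}{dx}\right)^2\ln f_k=f_{k+1}f_{k-1}/f_k^2$ for $k\geq 1$, which under $D_n=f_{n-1}$ becomes
\begin{equation*}
\left(x\frac{d}{dx}\right)^2\ln D_m=\frac{D_{m+1}D_{m-1}}{D_m^2},\qquad m\geq 2 .
\end{equation*}
The case $m=1$ is not covered by Proposition \ref{prop:fn_sol} and must be verified by hand: using $D_0=1$ and the cofactor expansion of $D_2$ in \eqref{eq:tau_2} one checks directly that $\left(x\frac{d}{dx}\right)^2\ln D_1=D_2D_0/D_1^2$, so the Toda relation with $c_n=1$ holds for all $m\geq 1$.

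Finally I would compare with the genuine tau functions. By the Toda equation for the $\tau_n$ (\cite[Proposition 4.2]{FW03}) the multiplicative freedom in Definition \ref{def:tau-fauntion} can be used to normalize so that $c_n=1$, and with this normalization \eqref{eq:tau_0} and \eqref{eq:tau_1} give $\tau_0=1=D_0$ and $\tau_1=\Cy_{\frac{\alpha}{2}}=D_1$. Since the relation can be solved forward as meromorphic functions of $x$ via $y_{n+1}=y_n^2\left(x\frac{d}{dx}\right)^2\ln(y_n)/y_{n-1}$, the pair $(y_0,y_1)$ determines the whole sequence; as $\tau_n$ and $D_n$ satisfy the same recursion and agree at $n=0,1$, an induction yields $\tau_n=D_n$ for all $n$, which is \eqref{eq:tau_first_det_formula}. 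The delicate points here are the bookkeeping in the normalization, ensuring that the multiplicative constants can be fixed simultaneously to realize both $c_n=1$ and the stated values $\tau_0,\tau_1$, and the base case $m=1$ of the Toda recursion for $D_n$, which falls outside the range of Proposition \ref{prop:fn_sol} and has to be confirmed from the explicit $2\times 2$ determinant.
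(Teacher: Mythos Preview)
Your argument is correct and follows essentially the same route as the paper: identify the determinant as the Wronskian solution of the normalized Toda equation via Proposition~\ref{prop:fn_sol}, match the first two terms with the explicitly computed tau functions \eqref{eq:tau_0}--\eqref{eq:tau_2}, and conclude by the uniqueness of the forward recursion. The only cosmetic difference is that the paper seeds the induction with $(\tau_1,\tau_2)$ while you seed it with $(\tau_0,\tau_1)$ and check the $m=1$ step of Toda separately; your remark that the normalization $a_0=a_1=1$ in the proof of Proposition~4.2 simultaneously achieves $c_n=1$ and preserves the values \eqref{eq:tau_0}, \eqref{eq:tau_1} resolves the bookkeeping concern you flag.
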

       
\begin{proposition}
    \label{lem:tau-formula}
 Formula \eqref{eq:tau_first_det_formula} can be alternatively written as
     \begin{equation}\label{eq:tau_through_Delta}
         \tau_n(x,\alpha)=x^{n(n-1)}(-1)^{\frac{n(n-1)}{2}}\Delta_n(x,\alpha),
     \end{equation}
     where $\Delta_n(x,\alpha)$ is given by \eqref{def:hankel_bessel_determinant}.
\end{proposition}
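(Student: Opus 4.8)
The plan is to exhibit an explicit factorization of the matrix underlying $\Delta_n$ as the Hankel matrix underlying $\tau_n$ multiplied on the two sides by triangular matrices whose determinants produce exactly the prefactor $x^{n(n-1)}(-1)^{\frac{n(n-1)}{2}}$. Throughout I write $\nu=\frac{\alpha}{2}$, $D=x\frac{d}{dx}$, and $m_p=D^p\Cy_\nu(x)$. With these abbreviations \eqref{eq:tau_first_det_formula} reads $\tau_n=\det\left(\{m_{i+j}\}_{i,j=0}^{n-1}\right)$, a Hankel determinant in the ``moments'' $m_p$, while \eqref{def:hankel_bessel_determinant} is $\Delta_n=\det\left(\{\Cy_{\nu-i+k}(x)\}_{i,k=0}^{n-1}\right)$. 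The whole point is to rewrite each shifted cylinder function $\Cy_{\nu-i+k}$ as a polynomial in $D$ with $x$-independent coefficients applied to $\Cy_\nu$, so that the two determinants can differ only by triangular factors.

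First I would record the contiguous relations for cylinder functions in the operator form $x\,\Cy_{\nu+1}=(\nu-D)\Cy_\nu$ and $x\,\Cy_{\nu-1}=(\nu+D)\Cy_\nu$, which follow from $\Cy_{\nu\mp1}=\frac{\nu}{x}\Cy_\nu\pm\Cy_\nu'$. Using the commutation identity $D\,x^k=x^k(D+k)$ these iterate, by a short induction, to the closed forms
\begin{equation*}
x^k\Cy_{\nu+k}=P_k(D)\Cy_\nu,\qquad x^i\Cy_{\nu-i}=\tilde P_i(D)\Cy_\nu,
\end{equation*}
where $P_k(D)=\prod_{j=0}^{k-1}(\nu+2j-D)$ and $\tilde P_i(D)=\prod_{l=0}^{i-1}(\nu-2l+D)$ are polynomials in $D$ with coefficients independent of $x$; their top coefficients are $(-1)^k$ and $1$ respectively. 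That these coefficients do not depend on $x$ is exactly what will make the subsequent row and column operations legitimate.

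The key identity is then
\begin{equation*}
x^{i+k}\,\Cy_{\nu-i+k}(x)=\tilde P_i(D)\,P_k(D)\,\Cy_\nu(x),
\end{equation*}
obtained by applying $\tilde P_i(D)$ to $x^k\Cy_{\nu+k}=P_k(D)\Cy_\nu$ and using the conjugation $\tilde P_i(D)\,x^k=x^k\,\tilde P_i(D+k)$ to recognise the resulting operator as the down-ladder based at index $\nu+k$, which returns $x^i\Cy_{\nu+k-i}$. Expanding $\tilde P_i(D)=\sum_{p=0}^i s_{ip}D^p$ and $P_k(D)=\sum_{q=0}^k t_{qk}D^q$ converts this identity into the matrix factorization
\begin{equation*}
\left[\,x^{i+k}\Cy_{\nu-i+k}\,\right]_{i,k=0}^{n-1}=S\,\left[\,m_{p+q}\,\right]_{p,q=0}^{n-1}\,T,
\end{equation*}
with $S=(s_{ip})$ lower triangular with unit diagonal and $T=(t_{qk})$ upper triangular with diagonal entries $(-1)^k$. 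Taking determinants, pulling the scalar $x^{i+k}$ out of the left-hand side as $x^{n(n-1)}$, and using $\det S=1$, $\det T=(-1)^{\frac{n(n-1)}{2}}$, $\det[m_{p+q}]=\tau_n$ gives $x^{n(n-1)}\Delta_n=(-1)^{\frac{n(n-1)}{2}}\tau_n$, which is \eqref{eq:tau_through_Delta} since $(-1)^{\frac{n(n-1)}{2}}$ is its own inverse.

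The main obstacle is the bookkeeping inside the key identity: once the column ladder has introduced the factor $x^k$, applying the row ladder no longer acts on $\Cy_{\nu+k}$ directly but on $x^k\Cy_{\nu+k}$, and one must track that the conjugation $\tilde P_i(D)x^k=x^k\tilde P_i(D+k)$ shifts the constant in each factor by exactly $+k$, so that $\tilde P_i(D+k)$ is precisely the down-ladder operator appropriate to the base index $\nu+k$ rather than some other polynomial. Verifying these constant patterns ($\nu+2j$ for the up-ladder, $\nu-2l$ for the down-ladder) and confirming that both families of coefficients are genuinely independent of $x$ is the delicate part; I would pin down the signs on the base cases $n=1,2$ before committing to the induction.
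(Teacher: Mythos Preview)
Your proof is correct and rests on the same idea as the paper's: both express the entries $\Cy_{\nu-i+k}$ as triangular combinations of the ``moments'' $D^{p}\Cy_{\nu}$ via the contiguous relations, and read off the prefactor $x^{n(n-1)}(-1)^{n(n-1)/2}$ from the diagonals. The paper carries this out as a two-stage elimination: first row-reduce $\tau_n$ using $D^{j}\Cy_{\nu}=x^{j}\Cy_{\nu-j}+\text{(lower)}$ to obtain $\det\bigl(D^{k}(x^{j}\Cy_{\nu-j})\bigr)$, and then column-reduce using $D^{k}(x^{j}\Cy_{\nu-j})=(-1)^{k}x^{j+k}\Cy_{\nu-j+k}+\text{(lower)}$; for the second step it must verify separately that the residual coefficients $d_{mk}$ do not depend on the column index $j$. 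Your explicit factorization $x^{i+k}\Cy_{\nu-i+k}=\tilde P_i(D)P_k(D)\Cy_{\nu}$ with closed-form ladder polynomials is a tighter packaging of the same computation: the row and column dependence separate automatically into $S$ and $T$, and your conjugation observation $\tilde P_i(D)\,x^{k}=x^{k}\tilde P_i(D+k)$ is exactly what replaces the paper's ad hoc check of $j$-independence. The two arguments are equivalent in content; yours organizes the bookkeeping more cleanly.
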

     \begin{proof}
        Using mathematical induction and identity \eqref{eq:diff-iden1} one can compute the structure of $\left(x\dfrac{d}{dx}\right)^j\Cy_{\frac{\alpha}{2}}(x)$:
        \begin{equation}\label{eq:diff-iden5}
         \left(x\dfrac{d}{dx}\right)^j\Cy_{\frac{\alpha}{2}}(x)=x^j\Cy_{\frac{\alpha}{2}-j}(x)+\sum_{k=0}^{j-1}c_{kj}x^k\Cy_{\frac{\alpha}{2}-k}(x),
        \end{equation}
where $c_{kj}$ are constant coefficients.        Well, we furtherly simplify the determinant using \eqref{eq:diff-iden5}:
        \begin{align*}
           & \det\left(\left\{\left(x\dfrac{d}{dx}\right)^{k+j}\Cy_{\frac{\alpha}{2}}(x)\right\}_{k,j=0}^{n-1}\right)
            \\
            &=\begin{vmatrix}
                \Cy_{\frac{\alpha}{2}}(x) & x\dfrac{d}{dx}\Cy_{\frac{\alpha}{2}}(x) & \cdots\\
                x\Cy_{\frac{\alpha}{2}-1}(x)+c_{01}\Cy_{\frac{\alpha}{2}}(x) & x\dfrac{d}{dx}\left(x\Cy_{\frac{\alpha}{2}-1}(x)+c_{01}\Cy_{\frac{\alpha}{2}}(x)\right)& \cdots \\
                x^2\Cy_{\frac{\alpha}{2}-2}(x)+c_{12}x\Cy_{\frac{\alpha}{2}-1}(x)+c_{02}\Cy_{\frac{\alpha}{2}}(x)& x\dfrac{d}{dx}\left(x^2\Cy_{\frac{\alpha}{2}-2}(x)+c_{12}\Cy_{\frac{\alpha}{2}-1}+c_{02}\Cy_{\frac{\alpha}{2}}(x)\right)&\cdots\\
                \vdots&\vdots&\ddots
            \end{vmatrix}.
        \end{align*}
    Observe that by elementary row operations, we can always use the previous rows to eliminate the $\sum_{k=0}^{j-1}c_{kj}x^k\Cy_{\frac{\alpha}{2}-k}(x)$ part in a fixed row and the value of the determinant doesn't change. Doing that, we end up with:
    \begin{align*}
            &\det\left(\left\{\left(x\dfrac{d}{dx}\right)^{k+j}\Cy_{\frac{\alpha}{2}}(x)\right\}_{k,j=0}^{n-1}\right)\\
            &=\begin{vmatrix}
                \Cy_{\frac{\alpha}{2}}(x) & x\dfrac{d}{dx}\Cy_{\frac{\alpha}{2}}(x) & \cdots\vspace{0.2cm}\\
                x\,\Cy_{\frac{\alpha}{2}-1}(x)& x\dfrac{d}{dx}(x\,\Cy_{\frac{\alpha}{2}-1}(x))& \cdots \vspace{0.2cm}\\
                x^2\Cy_{\frac{\alpha}{2}-2}(x)& x\dfrac{d}{dx}(x^2\Cy_{\frac{\alpha}{2}-2}(x))&\cdots\\
                \vdots&\vdots&\ddots
            \end{vmatrix}=\det\left(\left\{\left(x\dfrac{d}{dx}\right)^{k}x^j\Cy_{\frac{\alpha}{2}-j}\right\}_{k,j=0}^{n-1}\right).
        \end{align*}
    By relation \eqref{eq:diff-iden2}, by induction, we can show:
    \begin{align}\label{eq:diff-iden6}
         \left(x\dfrac{d}{dx}\right)^kx^j\Cy_{\frac{\alpha}{2}-j}(x)&=(-1)^kx^{j+k}\Cy_{\frac{\alpha}{2}-j+k}(x)+\sum_{n=0}^{k-1}d_{nkj}x^{j+n}\Cy_{\frac{\alpha}{2}-j+n}(x)
        \end{align}
    To prove \eqref{eq:diff-iden6} by \eqref{eq:diff-iden2}, we first fix $k=1$ and induct on $j$. We have
    \begin{equation}
    \begin{aligned}
        x\dfrac{d}{dx}x^j\Cy_{\frac{\alpha}{2}-j}(x)=-x^{j+1}\Cy_{\frac{\alpha}{2}-j+1}(x)+x^j\left(\frac{\alpha}{2}-j\right)\Cy_{\frac{\alpha}{2}-j}(x)+jx^j\Cy_{\frac{\alpha}{2}-j}(x)\\=-x^{j+1}\Cy_{\frac{\alpha}{2}-j+1}(x)+x^j\frac{\alpha}{2}\Cy_{\frac{\alpha}{2}-j}(x)\label{eq:diff-iden7}
        \end{aligned}
        \end{equation}
After showing \eqref{eq:diff-iden7} we induct on $k$.        
    We also can notice that coefficient depending in $j$ cancels in the right-hand side of \eqref{eq:diff-iden7}. It implies that $d_{nkj}$ actually does not depend on $j$. So \eqref{eq:diff-iden6} can be written as 
    \begin{align}
         \left(x\dfrac{d}{dx}\right)^kx^j\Cy_{\frac{\alpha}{2}-j}(x)&=(-1)^kx^{j+k}\Cy_{\frac{\alpha}{2}-j+k}(x)+\sum_{n=0}^{k-1}d_{nk}x^{j+n}\Cy_{\frac{\alpha}{2}-j+n}(x)\label{eq:diff-iden8}
        \end{align}
    Again, we furtherly simplify the determinant by \eqref{eq:diff-iden8}:
    \begin{align*}
     \det\left(\left\{\left(x\dfrac{d}{dx}\right)^{k+j}\Cy_{\frac{\alpha}{2}}(x)\right\}_{k,j=0}^{n-1}\right)=
            \begin{vmatrix}
                \Cy_{\frac{\alpha}{2}}(x) & -x\,\Cy_{\frac{\alpha}{2}+1}(x)+d_{01}\Cy_{\frac{\alpha}{2}}(x)&\cdots\\
                x\,\Cy_{\frac{\alpha}{2}-1}(x) & -x^2\Cy_{\frac{\alpha}{2}}(x)+d_{01}x\,\Cy_{\frac{\alpha}{2}-1}(x)&\cdots\\
                \vdots&\vdots&\ddots
            \end{vmatrix}
    \end{align*}
    Similarly, by applying elementary column operations, we can always use the previous columns to eliminate the $\sum_{n=0}^{k-1}d_{nk}x^{j+n}\Cy_{\frac{\alpha}{2}-j+n}(x)$ part in a fixed column and the value of the determinant does not change. Finally, we will end up with:
    \begin{align*}
     \det\left(\left\{\left(x\dfrac{d}{dx}\right)^{k+j}\Cy_{\frac{\alpha}{2}}(x)\right\}_{k,j=0}^{n-1}\right)=\det\left((-1)^kx^{j+k}\Cy_{\frac{\alpha}{2}-j+k}(x)\right)
    \end{align*}
    By multi-linearity of determinant, we can factor out $(-1)^kx^{j+k}$ and reach the conclusion:
    \begin{align*}
      \tau_n(x,\alpha)=\det\left((-1)^kx^{j+k}\Cy_{\frac{\alpha}{2}-j+k}(x)\right)=x^{n(n-1)}(-1)^{\frac{n(n-1)}{2}}\Delta_n(x,\alpha)
    \end{align*}
    where $\Delta_n(x,\alpha)$ is given by \eqref{def:hankel_bessel_determinant}.
    That completes the proof.
     \end{proof}
  \subsection{Proof of Proposition \ref{prop:q_n_formula}}

  Before starting the proof we need to prove the following lemma.

  \begin{lemma}\label{lem:B1B2_identity}
The special function solution  $u_{n+1}(x,\alpha-2)$ admits the following formula in terms of $u_n(x,\alpha)$.
\begin{align}
(u_{n+1}(x,\alpha-2),\alpha+2n,-\alpha+6+2n)=B_1B_2(u_n(x,\alpha),\alpha+2n,-\alpha+2+2n).\label{eq:B1B2_identity}
\end{align}
  \end{lemma}
  \begin{proof}
We start by considering the B\"acklund transformations $B_1$ and $B_2$. Using the explicit formulas \eqref{eq:backlund1}, \eqref{eq:backlund2} and equation \eqref{eq:PIII} we can show that these transformations commute: $B_1B_2=B_2B_1$.

Let us now consider $B_2$ applied to the special function solution \eqref{def:u_0}. After using differential equation \eqref{eq:linear_ricatti2} for $w(x)$ we get
\begin{align*}
B_2(u_0(x,\alpha),\alpha,2-\alpha)[1]=\frac{2-\alpha}{x}+\frac{w(x)}{w'(x)}=\frac{2-\alpha}{x}+\dfrac{1}{\dfrac{\alpha}{2x}+\dfrac{\Cy'_\frac{\alpha}{2}(x)}{\Cy_\frac{\alpha}{2}(x)}}.
\end{align*}
We use the notation $[1]$ above for the first component of the output of B\"acklund transformation. We use \eqref{eq:diff-iden2} to get
\begin{align*}
B_2(u_0(x,\alpha),\alpha,2-\alpha)[1]=\frac{2-\alpha}{x}+\dfrac{\Cy_{\frac{\alpha}{2}}(x)}{\Cy_{\frac{\alpha}{2}-1}(x)}.
\end{align*}
Using identity \eqref{eq:diff-iden1} we can rewrite it as 
\begin{align}
B_2(u_0(x,\alpha),\alpha,2-\alpha)[1]=\frac{2-\alpha}{2x}-\dfrac{\Cy'_{\frac{\alpha}{2}-1}(x)}{\Cy_{\frac{\alpha}{2}-1}(x)}=-\dfrac{d}{dx}\ln(w(x,\alpha-2))=u_0(x,\alpha-2).
\end{align}
Now using commutativity of $B_1$ and $B_2$ we get
\begin{align*}
B_2(u_n(x,\alpha),\alpha+2n,-\alpha+2+2n)=B_2B_1^n(u_0(x,\alpha),\alpha,-\alpha+2)\\=B_1^n(u_0(x,\alpha-2),\alpha-2,-\alpha+4)=(u_n(x,\alpha-2),\alpha-2+2n,-\alpha+4+2n).
\end{align*}
Using similar logic, we arrive at \eqref{eq:B1B2_identity} and finish the proof.
  \end{proof}
\begin{proof}[Proof of Proposition \ref{prop:q_n_formula}]

We start our proof by introducing the following sequence of functions
\begin{equation}
    \widetilde{u}_n(x,\alpha)=-\frac{\Delta_{n+1}(x,\alpha-2)\Delta_n(x,\alpha)}{\Delta_{n+1}(x,\alpha)\Delta_n(x,\alpha-2)}.
        \end{equation}
Using Proposition \ref{lem:tau-formula} we can rewrite it in terms of tau functions
\begin{equation}\label{eq:qn-through-tau}
    \widetilde{u}_n(x,\alpha)=-\frac{\tau_{n+1}(x,\alpha-2)\tau_n(x,\alpha)}{\tau_{n+1}(x,\alpha)\tau_n(x,\alpha-2)}.
        \end{equation}
Using the Toda equation \eqref{def:fn_toda} and the definition of tau function \eqref{def_tau} we can see that 
\begin{equation}\label{eq:un_tilde_recurrence1}
    \widetilde{u}_{n+1}(x,\alpha)= \widetilde{u}_{n}(x,\alpha)\dfrac{  \left(x\dfrac{d}{dx}\right)^2\ln(\tau_{n+1}(x,\alpha-2))}{  \left(x\dfrac{d}{dx}\right)^2\ln(\tau_{n+1}(x,\alpha))}=\widetilde{u}_{n}(x,\alpha)\dfrac{  \left(x\dfrac{d}{dx}\right)h_{n+1}(x,\alpha-2)}{  \left(x\dfrac{d}{dx}\right)h_{n+1}(x,\alpha)}.
\end{equation}
Using Lemma \ref{lem:B1B2_identity} and the definition of $h_{n+1}(x,\alpha)$ we can express the right hand side of \eqref{eq:un_tilde_recurrence1} in terms of $u_n(x,\alpha)$. We also provide intermediate formulas
\begin{align}
    h_{n+1}(x,\alpha)=&-\frac{1}{8 u_n(x,\alpha)^2}\Big[u_n(x,\alpha)^2 \left(\alpha^2-4 n^2+2 x^2+3\right)+2 x u_n(x,\alpha)
   \left(\alpha-2 n+u_n'(x,\alpha)-2\right)\Big.\\&\Big.+2 x (\alpha+2 n+2) u_n(x,\alpha)^3-x^2
\left(\left(u_n'(x,\alpha)\right)^2-1\right)+x^2 u_n(x,\alpha)^4\Big],
\end{align}
\begin{align}
    h_{n+1}(x,\alpha-2)=&-\frac{1}{8 u_n(x,\alpha)^2}\Big[u_n(x,\alpha)^2 \left(a^2-4 a-4 n^2+2 x^2+7\right)+x^2 u_n(x,\alpha)^4\\&-2 x u_n(x,\alpha)
   \left(-a+2 n+u_n'(x,\alpha)+4\right)+2 x (a+2 n) u_n(x,\alpha)^3-x^2
   \left(u_n'(x,\alpha)^2-1\right)\Big].
\end{align}

As the result we get
\begin{equation}\label{eq:un_tilde_recurrence2}
    \widetilde{u}_{n+1}(x,\alpha)= \widetilde{u}_{n}(x,\alpha)\frac{(-3+\alpha+2n)u_n(x,\alpha)+xu_n^2(x,\alpha)+x+xu_n'(x,\alpha)}{u_n^2(x,\alpha)((1+\alpha+2n)u_n(x,\alpha)+xu_n^2(x,\alpha)+x+xu_n'(x,\alpha))}.
\end{equation}
Using identity \eqref{eq:diff-iden1}, initial conditions \eqref{eq:tau_0}, \eqref{eq:tau_1} and definition \eqref{eq:qn-through-tau} we can observe that 
\begin{equation}
\widetilde{u}_0(x,\alpha)=u_0(x,\alpha).
\end{equation}
Using explicit formula \eqref{eq:backlund1} for B\"acklund transformation $B_1$, relation
\eqref{eq:un_tilde_recurrence2}  and mathematical induction we get the desired result
\begin{equation}
    \widetilde{u}_n(x,\alpha)=u_n(x,\alpha).
\end{equation}

\end{proof}
\section{Asymptotics of Toeplitz determinant at zero}\label{sec:hankel_bessel_det_asymptotic}
\subsection{Andr\`eief identity}\label{sec:andreief}
To prove our result, we rewrite Toeplitz determinant \eqref{def:hankel_bessel_determinant} as a multiple contour integral. 
\begin{proposition}[see \cite{forrester}]\label{prop:andreief}
    Andr\`eief identity is given by the following formula
        \begin{equation*}
           \intop_{\Gamma}\ldots\intop_\Gamma \det\left(\left\{f_j(x_k)\right\}_{j,k=1}^n\right)\det\left(\left\{g_j(x_k)\right\}_{j,k=1}^n\right)\prod_{k=1}^n h(x_k)dx_k=n!\det\left(\left\{\intop_{\Gamma}f_j(x)g_k(x)h(x)dx\right\}_{j,k=1}^n\right)
        \end{equation*}
        where $\Gamma$ is some contour in the complex plane, such that the corresponding integral is finite.
\end{proposition}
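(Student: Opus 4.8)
The plan is to reduce the identity to pure permutation bookkeeping by expanding both determinants with the Leibniz formula stated above, factoring the resulting multiple integral, and then reorganizing the double sum over permutations.

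First I would apply the Leibniz formula to each determinant, writing
\[
\det\left(\{f_j(x_k)\}_{j,k=1}^n\right)=\sum_{\sigma\in S_n}\mathrm{sgn}(\sigma)\prod_{k=1}^n f_{\sigma(k)}(x_k),
\]
and analogously for the $g$-determinant with a second permutation $\tau\in S_n$. Substituting both expansions into the integrand and noting that the weight $\prod_{k=1}^n h(x_k)$ is already a product over the integration variables, the integrand becomes a finite sum over pairs $(\sigma,\tau)$ of terms that factor variable by variable. Since this sum is finite, I can interchange it with the integration, and because each summand factors as $\prod_{k=1}^n f_{\sigma(k)}(x_k)g_{\tau(k)}(x_k)h(x_k)$, the $n$-fold integral of each summand splits into a product of $n$ single integrals over $\Gamma$. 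Introducing the matrix entries $M_{jk}=\intop_\Gamma f_j(x)g_k(x)h(x)\,dx$, the left-hand side becomes
\[
\sum_{\sigma,\tau\in S_n}\mathrm{sgn}(\sigma)\,\mathrm{sgn}(\tau)\prod_{k=1}^n M_{\sigma(k),\tau(k)}.
\]

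The final step is the permutation relabeling. For fixed $\sigma$ I would substitute $\tau=\rho\sigma$, so that $\mathrm{sgn}(\sigma)\,\mathrm{sgn}(\tau)=\mathrm{sgn}(\sigma)^2\,\mathrm{sgn}(\rho)=\mathrm{sgn}(\rho)$, and, reindexing the product by $j=\sigma(k)$, one has $\prod_{k=1}^n M_{\sigma(k),\tau(k)}=\prod_{j=1}^n M_{j,\rho(j)}$. The inner sum over $\rho$ is then exactly $\det(\{M_{jk}\}_{j,k=1}^n)$ by the Leibniz formula, and it no longer depends on $\sigma$; summing over the $n!$ choices of $\sigma$ therefore yields $n!\det(\{M_{jk}\})$, which is precisely the right-hand side.

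I expect the only delicate point to be this reindexing of the double sum, while everything else is a routine application of Leibniz's formula together with the factorization of the product integral. If one wishes to be careful, the interchange of sum and integral requires no hypothesis beyond integrability of each $f_jg_kh$ along $\Gamma$, since the permutation sum has only finitely many terms.
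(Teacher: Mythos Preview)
Your argument is correct and is precisely the standard proof of the Andr\'eief identity: expand both determinants via the Leibniz formula, factor the resulting multiple integral, and collapse the double permutation sum via the substitution $\tau=\rho\sigma$. The paper itself does not supply a proof of this proposition at all---it simply cites \cite{forrester}---so there is nothing to compare against; your write-up would serve as a self-contained proof where the paper has none.
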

    We apply the Andr\`eief identity and get the following result.
\begin{theorem} \label{thm:tau-integral-formula}
The Toeplitz determinant $\Delta_n(x,\alpha)$ given by \eqref{def:hankel_bessel_determinant} can be rewritten as
    \begin{equation}
\Delta_n(x,\alpha)=\frac{1}{n!}\intop_{\Gamma_1\cup\Gamma_2}\ldots \intop_{\Gamma_1\cup\Gamma_2}\prod_{1\leq j<k\leq n}(t_k-t_j)\left(\frac{1}{t_k}-\frac{1}{t_j}\right)\prod_{k=1}^{n} h_1(t_k)dt_k \label{eq:tau-formula}
        \end{equation}
        where
        \begin{equation*}
        h_1(t)=\frac{\ee^{\frac{x}{2}\left(t-\frac{1}{t}\right)}}{2 \pi \ii t^{1+\frac{\alpha}{2}}}\left(\left(d_1+d_2\cot\left(\frac{\pi\alpha}{2}\right)\right)\chi_{\Gamma_1}(t)+d_2\csc\left(\frac{\pi\alpha}{2}\right)\ee^{\frac{\ii\pi\alpha}{2}}\chi_{ \Gamma_2}(t)\right),
        \end{equation*}
       and contours of integration $\Gamma_1$, $\Gamma_2$ are shown on Figure \ref{fig:gamma12Cy} and they don't intersect. We use the notation $\chi_{ \Gamma_j}(t)$ for the characteristic function of the contour $\Gamma_j$.  We assume $-\pi<\arg(t)<\pi$ on the contour $\Gamma_1$ and $0<\arg(t)<2\pi $ on the contour $\Gamma_2$.
\end{theorem}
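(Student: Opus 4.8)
The plan is to read the claimed identity from right to left: I recognize the product $\prod_{1\le j<k\le n}(t_k-t_j)\left(\frac{1}{t_k}-\frac{1}{t_j}\right)$ as a product of two Vandermonde determinants, collapse the $n$-fold integral to a single determinant by the Andr\`eief identity (Proposition \ref{prop:andreief}), and then verify that the resulting scalar contour integrals reproduce the cylinder functions entrywise, recovering $\Delta_n(x,\alpha)$.

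First I would isolate the one–dimensional building block: for every integer $m$,
\begin{equation*}
\intop_{\Gamma_1\cup\Gamma_2} t^{m}\,h(t)\,dt=\Cy_{\frac{\alpha}{2}-m}(x).
\end{equation*}
The $\Gamma_1$-part is evaluated by the classical Hankel-type contour representation $J_\nu(x)=\frac{1}{2\pi i}\intop_{\Gamma_1}t^{-\nu-1}e^{\frac{x}{2}(t-1/t)}\,dt$ with $\nu=\frac{\alpha}{2}-m$, while the $\Gamma_2$-part, where $0<\arg t<2\pi$, produces $J_{-(\frac{\alpha}{2}-m)}(x)$, the factor $e^{\frac{i\pi\alpha}{2}}$ appearing in $h$ compensating for the shift of the branch of $t^{-\alpha/2}$ between the two contours. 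Combining the two pieces through the connection formula $Y_\nu=\cot(\pi\nu)J_\nu-\csc(\pi\nu)J_{-\nu}$, and using that $\cot$ is invariant and $\csc$ invariant up to a sign under the integer shift $\nu\mapsto\nu-m$, the coefficients $d_1+d_2\cot\left(\frac{\pi\alpha}{2}\right)$ and $d_2\csc\left(\frac{\pi\alpha}{2}\right)e^{\frac{i\pi\alpha}{2}}$ recombine into exactly $d_1J_{\frac{\alpha}{2}-m}+d_2Y_{\frac{\alpha}{2}-m}=\Cy_{\frac{\alpha}{2}-m}$. The hypothesis $\alpha\notin 2\mathbb{Z}+i\mathbb{R}$ guarantees that $\cot$ and $\csc$ are finite here.

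With the building block in hand I would factor the integrand by writing its entries as powers: $\prod_{1\le j<k\le n}(t_k-t_j)=\det\left(\{t_k^{j-1}\}_{j,k=1}^n\right)$ is the Vandermonde determinant, and $\prod_{1\le j<k\le n}\left(\frac{1}{t_k}-\frac{1}{t_j}\right)=\det\left(\{t_k^{-(j-1)}\}_{j,k=1}^n\right)$ is the Vandermonde determinant in the variables $1/t_k$. Applying Proposition \ref{prop:andreief} with $f_j(t)=t^{j-1}$, $g_k(t)=t^{-(k-1)}$, weight $h$, and contour $\Gamma=\Gamma_1\cup\Gamma_2$ turns the $n$-fold integral into $n!\det\left(\{\intop_{\Gamma_1\cup\Gamma_2}t^{\,j-k}h(t)\,dt\}_{j,k=1}^n\right)$. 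By the building block with $m=j-k$, the $(j,k)$ entry equals $\Cy_{\frac{\alpha}{2}-j+k}(x)$, so after the harmless reindexing $j,k\in\{1,\dots,n\}\to\{0,\dots,n-1\}$ this determinant is precisely $\Delta_n(x,\alpha)$ from \eqref{def:hankel_bessel_determinant}; dividing by $n!$ yields \eqref{eq:tau-formula}.

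The main obstacle is the scalar identity of the first step: all the analytic content sits in choosing the contours $\Gamma_1,\Gamma_2$ and carefully tracking the branch of $t^{-\alpha/2}$ so that the $\Gamma_2$-integral delivers $J_{-(\frac{\alpha}{2}-m)}$ with exactly the coefficient needed to assemble the second-kind Bessel function, with $e^{\frac{i\pi\alpha}{2}}$ serving as the bookkeeping device that makes this work uniformly in $m$. By contrast, the Vandermonde factorization and the application of the Andr\`eief identity are purely formal, and convergence of the contour integrals, controlled by the decay of $e^{\frac{x}{2}(t-1/t)}$ along the ends of $\Gamma_1,\Gamma_2$, is routine.
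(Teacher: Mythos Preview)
Your proof is correct and follows essentially the same route as the paper: establish the scalar identity $\intop_{\Gamma_1\cup\Gamma_2}t^{m}h(t)\,dt=\Cy_{\frac{\alpha}{2}-m}(x)$ (which the paper records as \eqref{eq:cy_integral_representation} in the appendix), factor the integrand via the Vandermonde formulas, and apply Andr\`eief's identity. The only difference is cosmetic---you run the argument from the integral side toward the determinant, while the paper starts from $\Delta_n(x,\alpha)$ and unfolds it into the multiple integral---and you spell out explicitly why the $\Gamma_2$ coefficient $\csc(\pi\nu)e^{i\pi\nu}$ is invariant under the integer shift $\nu\mapsto\nu-m$, a point the paper leaves implicit.
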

\begin{proof}
  Using contour integral representation \eqref{eq:cy_integral_representation} we get
    \begin{equation*}
    \Cy_{\frac{\alpha}{2}-j+k}(x)=\intop_{\Gamma_1\cup\Gamma_2}t^{-k}t^jh_1(t)dt
     \end{equation*}
Put $g_k(t)=t^{-k}$ and $f_j(t)=t^j$. By Proposition \ref{prop:andreief}, we get
     \begin{align*}
         \Delta_n(x,\alpha)&=\frac{1 }{n!}\intop_{\Gamma_1\cup\Gamma_2}\ldots\intop_{\Gamma_1\cup\Gamma_2}\det\left(\left\{f_j(t_k)\right\}_{j,k=0}^{n-1}\right)\det\left(\left\{g_j(t_k)\right\}_{j,k=0}^{n-1}\right)\prod_{k=0}^{n-1} h_1(t_k)dt_k\\&=\frac{1}{n!}\intop_{\Gamma_1\cup\Gamma_2}\ldots\intop_{\Gamma_1\cup\Gamma_2}\det\left(\left\{t_k^j\right\}_{j,k=0}^{n-1}\right)\det\left(\left\{t_k^{-j}\right\}_{j,k=0}^{n-1}\right)\prod_{k=0}^{n-1} h_1(t_k)dt_k
     \end{align*}
By the formula for the Vandermonde determinant we can simplify the integrand and get
    \begin{equation*}
       \det\left(\left\{t_k^j\right\}_{j,k=0}^{n-1}\right)=\prod_{0\leq j<k\leq n-1}(t_k-t_j)
     \end{equation*}
    \begin{equation*}
       \det\left(\left\{t_k^{-j}\right\}_{j,k=0}^{n-1}\right)=\prod_{0\leq j<k\leq n-1}\left(\frac{1}{t_k}-\frac{1}{t_j}\right)
     \end{equation*}
Thus, the explicit formula for $\Delta_n(x,\alpha)$ is given by:
        \begin{equation*}
         \Delta_n(x,\alpha)=\frac{1}{n!}\intop_{\Gamma_1\cup\Gamma_2}\ldots \intop_{\Gamma_1\cup\Gamma_2}\prod_{0\leq j<k\leq n-1}(t_k-t_j)\left(\frac{1}{t_k}-\frac{1}{t_j}\right)\prod_{k=0}^{n-1} h_1(t_k)dt_k.
        \end{equation*}
For convenience we shift the index of variables $t_k$.

\end{proof}
\subsection{Basic strategies}
Up to this point, we have enough preparation to compute the asymptotics at zero. Our goal is to get asymptotics $\Delta_n(x,\alpha)\sim b(n)x^{a(n)}$ when $x\rightarrow 0$, $x>0$. This is a reasonable expectation, since the Bessel function $J_\nu(x)$ admits series representation \eqref{eq:bessel_series}.
We summarize several key ideas to achieve this goal.
\begin{itemize}
\item The contours $\Gamma_1$ and $\Gamma_2$ spread to zero and infinity in formula \eqref{eq:tau-formula}.   We cannot put $x=0$ here without losing convergence of the integral. 
     \item Expanding the product $\prod_{k=0}^{n-1} h(t_k)$ in the integrand of \eqref{eq:tau-formula} we get the sum of expressions, each of them has some of the variables $t_k$ belonging to the contour $\Gamma_1$ and others belonging to $\Gamma_2$.
    \item We apply the change of variables $t=\frac{2}{x}s$ to variables on contours $\Gamma_1$.  The integrand will maintain an exponential decay at infinity when we put $x=0$. On the other hand, we can apply the change of variable $t=\frac{x}{2}s$ to the variables on contours $\Gamma_2$. In this case, the integrand will preserve exponential decay at zero when we put $x=0$.
     \item From first glance, it seems that it would be more convenient to use the expression $d_1J_{\nu}(x)+d_2J_{-\nu}(x)$ instead of \eqref{def:f_nu}. But unfortunately, Proposition \ref{prop:q_n_formula} would fail if we replace $Y_\nu(x)$ with $J_{-\nu}(x)$. It follows from the fact that the differential identities \eqref{eq:diff-iden1}, \eqref{eq:diff-iden2} would fail for this alternative choice. And we need them to hold, since they were used extensively in the proof of Proposition \ref{lem:tau-formula}. 
\end{itemize}

\subsection{Expanded formula for \texorpdfstring{$\Delta_n(x,\alpha)$}{Deltan(x,alpha)}}
    
We start with the following observation. Let $I$ denote a subset of the set of indices $\{1,\ldots,n\}$, $r$ denote its cardinality $|I|$ and $I^c$ denote its complement. The following identity holds
         \begin{equation}
           \label{eq:expansion}\prod\limits_{k=1}^n(c_1\chi_{\Gamma_1}(t_k)+c_2\chi_{ \Gamma_2} 
 (t_k))=\sum_{r=0}^n\sum_{\substack{I\subset\{1,...,n\}\\ \left\vert I\right\vert=r}}c_1^{r}c_2^{n-r}\prod_{k\in I}\chi_{\Gamma_1}(t_k)\prod_{j\in I^c}\chi_{\Gamma_2}(t_j).
          \end{equation}

Now we apply \eqref{eq:expansion} to the expression in Theorem \ref{thm:tau-integral-formula} to convert the formula into a summation form and decouple the contours. Denote
\begin{equation}
    \label{def:c1_c2}{}c_1=d_1+d_2\cot\left(\frac{\pi\alpha}{2}\right),\quad c_2=d_2\csc\left(\frac{\pi\alpha}{2}\right)\ee^{\frac{\ii\pi\alpha}{2}}.
\end{equation}
We have
    \begin{align*}
&\Delta_n(x,\alpha)=\frac{(-1)^{\frac{n(n-1)}{2}}}{n!}\intop_{\Gamma_1\cup\Gamma_2}\ldots \intop_{\Gamma_1\cup\Gamma_2}\prod_{m=1}^{n}(c_1\chi_{\Gamma_1}(t_m)+c_2\chi_{\Gamma_{2}}(t_m))\prod_{1\leq j<k\leq n}\frac{(t_j-t_k)^2}{t_jt_k}\prod_{l=1}^{n}\frac{\ee^{\frac{x}{2}(t_l-\frac{1}{t_l})}}{2\pi \ii t_{l}^{\frac{\alpha}{2}+1}}dt_l\\=&\frac{(-1)^{\frac{n(n-1)}{2}}}{n!}\intop_{\Gamma_1\cup\Gamma_2}\ldots \intop_{\Gamma_1\cup\Gamma_2}\sum_{r=0}^{n}\sum_{\substack{I\subset\{1,...,n\}\\ \left\vert I\right\vert=r}}c_1^{r}c_2^{n-r}\prod_{i\in I}\chi_{\Gamma_1}(t_i)\prod_{j\in I^c}\chi_{\Gamma_2}(t_j)\prod_{1\leq j<k\leq n}\frac{(t_j-t_k)^2}{t_jt_k}\prod_{l=1}^{n}\frac{\ee^{\frac{x}{2}(t_l-\frac{1}{t_l})}}{2\pi \ii t_{l}^{\frac{\alpha}{2}+1}}dt_l\\=&\frac{(-1)^{\frac{n(n-1)}{2}}}{(2\pi \ii)^{n}n!}\sum_{r=0}^{n}\sum_{\substack{I\subset\{1,...,n\}\\ \left\vert I\right\vert=r}}c_1^{r}c_2^{n-r}\intop_{\Gamma_1\cup\Gamma_2}\ldots \intop_{\Gamma_1\cup\Gamma_2}{\prod_{\substack{j<k\\j,k\in I}}\frac{(t_j-t_k)^2}{t_jt_k}}{\prod_{\substack{j<k\\j,k\in I^{c}}}\frac{(t_j-t_k)^2}{t_jt_k}}{\prod_{\substack{j\in I\\k\in I^{c}}}\frac{(t_j-t_k)^2}{t_jt_k}}\\&\prod_{l\in I}\chi_{\Gamma_1}(t_l)\prod_{l\in I^c}\chi_{\Gamma_2}(t_l)\prod_{l=1}^n\frac{\ee^{\frac{x}{2}(t_l-\frac{1}{t_l})}}{t_{l}^{\frac{\alpha}{2}+1}}dt_l
        \end{align*}
We remind that we assume $-\pi<\arg(t)<\pi$ on contour $\Gamma_1$ and $0<\arg(t)<2\pi $ on contour $\Gamma_2$. We can see that by renaming variables on the right hand side we can guarantee that $I=\{1,2,\ldots,r\}$, and $I^c=\{r+1,r+2,\ldots,n\}$ for each integral in the sum. Combining the same integrals together we get
\begin{align}
&\Delta_n(x,\alpha)=\frac{(-1)^{\frac{n(n-1)}{2}}}{(\pi \ii)^{n}}\sum_{r=0}^n\frac{c_1^{r}c_2^{n-r}}{r!(n-r)!}\intop_{\Gamma_1}\ldots\intop_{\Gamma_1}\intop_{\Gamma_2}\ldots\intop_{\Gamma_2}\prod_{{1\leq j<k\leq r}}\frac{(t_j-t_k)^2}{t_jt_k}{\prod_{r+1\leq j<k\leq n}\frac{(t_j-t_k)^2}{t_jt_k}}\\&\times{\prod_{{j=1}}^r\prod_{{k=r+1}}^n\frac{(t_j-t_k)^2}{t_jt_k}}\prod_{l=1}^n\frac{\ee^{\frac{x}{2}(t_l-\frac{1}{t_l})}}{t_{l}^{\frac{\alpha}{2}+1}}dt_l{},
\label{eq:delta_n_expansion0}
\end{align}
where we have $r$ integrals over $\Gamma_1$ and $n-r$ integrals over $\Gamma_2$.

For $t_k\in I$, we use change of variable $t_k=\frac{2}{x}s_k$. On the other hand, for $t_j\in I^{c}$, we use change of variable $t_j=\frac{x}{2}s_j$. Since the only singularities of the integrand are at zero and infinity, we can deform contours of integration back to $\Gamma_1$ and $\Gamma_2$. The formula above becomes 
    \begin{align*}
        &\Delta_{n}(x,\alpha)=\frac{(-1)^{\frac{n(n-1)}{2}}}{(2\pi \ii)^{n}}\sum_{r=0}^n\frac{c_1^{r}c_2^{n-r}}{r!(n-r)!}\intop_{\Gamma_1}\ldots\intop_{\Gamma_1}\intop_{\Gamma_2}\ldots\intop_{\Gamma_2}\prod_{1\leq j<k\leq r}\frac{(s_j-s_k)^2}{s_js_k}\prod_{r+1\leq j<k\leq n}\frac{(s_j-s_k)^2}{s_js_k}\\&\prod_{j=1}^r\prod_{k=r+1}^n\left(\frac{2}{x}\right)^2\frac{(s_j(1+\mathcal{O}(x^2))^2}{s_js_k}\prod_{l=1}^r\frac{\ee^{s_l}(1+\mathcal{O}(x^2))}{\left(\frac{2}{x}\right)^{\frac{\alpha}{2}+1}s_l^{\frac{\alpha}{2}+1}}\frac{2}{x}ds_l\prod_{l=r+1}^n\frac{\ee^{-\frac{1}{s_l}}(1+\mathcal{O}(x^2))}{s_l^{\frac{\alpha}{2}+1}\left(\frac{x}{2}\right)^{\frac{\alpha}{2}+1}}\frac{x}{2}ds_l
    \end{align*}
\normalsize
    Grouping all the $\frac{2}{x}$ factors together and pulling them out of the summation, 
    \begin{align*}
&\Delta_n(x,\alpha)=\frac{(-1)^{\frac{n(n-1)}{2}}}{(2\pi \ii)^{n}}\sum_{r=0}^{n}\frac{c_1^{r}c_2^{n-r}}{r!(n-r)!}\left(\frac{2}{x}\right)^{-\alpha r+2r(n-r)+\frac{\alpha n}{2}}\intop_{\Gamma_1}\ldots\intop_{\Gamma_1}\intop_{\Gamma_2}\ldots\intop_{\Gamma_2}\prod_{1\leq j<k\leq r}\frac{(s_j-s_k)^2}{s_js_k}\\&\prod_{r+1\leq j<k\leq n}\frac{(s_j-s_k)^2}{s_js_k}
        \prod_{j=1}^r\prod_{k=r+1}^n\frac{(s_j(1+\mathcal{O}(x^2))^2}{s_js_k}\prod_{l=1}^r\frac{\ee^{s_l}(1+\mathcal{O}(x^2))}{s_l^{\frac{\alpha}{2}+1}}ds_l\prod_{l=r+1}^n\frac{\ee^{-\frac{1}{s_l}}(1+\mathcal{O}(x^2))}{s_l^{\frac{\alpha}{2}+1}}ds_l
    \end{align*}
\normalsize
We also want to group all the products of variable s together and separate the integrals based on different contours. We rewrite the following three parts,
\begin{align}
    &\prod_{1\leq j<k\leq r}\frac{(s_j-s_k)^2}{s_js_k}= \prod_{1\leq j<k\leq r}(s_j-s_k)^2\prod_{\substack{j\not= k\\1\leq j,k\leq r}}\frac{1}{\sqrt{s_js_k}}=\prod_{1\leq j<k\leq r}(s_j-s_k)^2\prod_{l=1}^rs_l^{\gamma_1}\label{eq:gamma1}\\&\prod_{r+1\leq j<k\leq n}\frac{(s_j-s_k)^2}{s_js_k}= \prod_{r+1\leq j<k\leq n}(s_j-s_k)^2\hspace{-0.5cm}\prod_{\substack{j\not= k\\r+1\leq j,k\leq n}}\frac{1}{\sqrt{s_js_k}}=\prod_{r+1\leq j<k\leq n}(s_j-s_k)^2\prod_{l=r+1}^ns_l^{\gamma_2}\label{eq:gamma2}\\&\prod_{j=1}^r\prod_{k=r+1}^n\frac{s_j}{s_k}=\prod_{l=1}^rs_l^{\gamma_3}\prod_{l=r+1}^ns_l^{\gamma_4}
\end{align}
To find $\gamma_1$ we can interpret the product in \eqref{eq:gamma1} as product a over all elements of $r\times r$ matrix except for the diagonal. The terms with $s_j$ appear along the $j$th row and $j$th column, so there are $2(r-1)$ of them. A similar argument can be used for the computation of $\gamma_2$, but the size of the matrix would be $(n-r)\times (n-r)$.  Keeping in mind that we still have square root and introducing the power $-\frac{1}{2}$, then we get:
\begin{align*}
    &\gamma_1=-\frac{2(r-1)}{2}=1-r,\\
    &\gamma_2=-\frac{2(n-r-1)}{2}=-n+r+1.
\end{align*}
To compute $\gamma_3$ and $\gamma_4$ we visualize the number of the $s$-factors using the following matrix
\begin{equation*}
    \left\{\dfrac{s_{j}}{s_{k}}\right\}_{\substack{1\leq j \leq r\\r+1 \leq k\leq n}}=
    \begin{pmatrix}
    \dfrac{s_{1}}{s_{r+1}} & \dfrac{s_{1}}{s_{r+2}} & \dfrac{s_{1}}{s_{r+3}} & \cdots
    \vspace{0.2cm}\\
     \dfrac{s_{2}}{s_{r+1}} & \dfrac{s_{2}}{s_{r+2}} & \dfrac{s_{2}}{s_{r+3}} & \cdots\\
     \vdots & \vdots & \vdots & \ddots
    \end{pmatrix}
\end{equation*}
We can observe that horizontally, for each $s_{j}$, there are $(n-r)$ factors. Vertically, for each $s_{k}$ there are $r$ factors. We get
\begin{align*}
    &\gamma_3=n-r,\\
    &\gamma_4=-r.
\end{align*}
As a result, we get the following preliminary asymptotic formula for $\Delta_n(x,\alpha)$.
 
    \begin{lemma}\label{lem:sum_of_integrals}
   The Toeplitz determinant  \eqref{def:hankel_bessel_determinant} admits the following  $x \rightarrow 0$, $x>0$ asymptotics for fixed $d_1,d_2\in \mathbb{R}$, $n\in\mathbb{N}\cup\{0\}$, $\alpha\in\mathbb{C}\setminus (2\mathbb{Z}+\ii\mathbb{R})$
   \begin{equation}
       \begin{split}
&\Delta_n(x,\alpha)=\frac{(-1)^{\frac{n(n-1)}{2}}}{(2\pi \ii)^{n}}\sum_{r=0}^n\frac{c_1^{r}c_2^{n-r}}{r!(n-r)!}\left(\frac{2}{x}\right)^{-\alpha r+2r(n-r)+\frac{\alpha n}{2}}(1+\mathcal{O}(x^2))\\&\times\intop_{\Gamma_1}\ldots \intop_{\Gamma_1}\prod_{{1\leq j< k\leq r}}(s_j-s_k)^2
           \prod_{l=1}^r\ee^{s_l}s_l^{-\frac{\alpha}{2}-2r+n}ds_l\intop_{\Gamma_2}\ldots \intop_{\Gamma_2}\prod_{{1\leq j< k\leq n-r}}(s_j-s_k)^2\prod_{l=1}^{n-r}\ee^{-\frac{1}{s_l}}s_l^{-\frac{\alpha}{2}-n}ds_l.
      \label{eq:hankel_alternative}  \end{split}\end{equation}
The coefficients $c_1$, $c_2$ are given by \eqref{def:c1_c2}.       The contours of integration $\Gamma_1$, $\Gamma_2$ are shown in Figure \ref{fig:gamma12Cy}.  We assume $-\pi<\arg(t)<\pi$ on the contour $\Gamma_1$ and $0<\arg(t)<2\pi $ on the contour $\Gamma_2$.
  
    \end{lemma}
    \subsection{Asymptotics of \texorpdfstring{$\Delta_n(x,\alpha)$}{Delta n(x,alpha)}  for \texorpdfstring{$x\to 0$}{x->0}, \texorpdfstring{$x>0$}{x>0}.}
The asymptotics of $\Delta_n(x,\alpha)$ is the leading term of the asymptotic formula \eqref{eq:hankel_alternative}. Denote the power of $x$ appearing in \eqref{eq:hankel_alternative} as $p(r,\alpha,n)$:
\begin{equation}\label{def:p}
    p(r,\alpha,n)=\alpha r-\frac{n\alpha}{2}-2r(n-r).
\end{equation}
We need to find the minimum of $\re(p(r,\alpha,n))$ with respect to $r$. Introduce notation for the index which realizes this minimum 
\begin{equation}\label{def:rc}
\mathop{\min}\limits_{\substack{0\leq r\leq n\\r\in \mathbb{N}\cup\{0\}}}\mathrm{Re}(p(r,\alpha,n))=p(r_c(\alpha,n),\alpha,n).
\end{equation}
We have the following formula for it.
\begin{lemma}\label{prop:piecewise function of r_c}
The critical index $r_c(\alpha,n)$ defined by \eqref{def:rc} admits the following piecewise formula
\begin{equation}\label{eq:rc}
r_c(\alpha,n)=
    \begin{cases}
    0 & \text{if } \re(\alpha)>2n-2\\
    j & \text{if } 2n-4j-2<\re(\alpha)<2n-4j+2\\& \text{ and }j=1, 2,\ldots,n-1\\
    n &\text{if } \re(\alpha)<-2n+2
    \end{cases}
\end{equation}
\end{lemma}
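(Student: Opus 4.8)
The plan is to treat $\mathrm{Re}(p(r,\alpha,n))$ as a function of the \emph{integer} variable $r\in\{0,1,\dots,n\}$ and to locate its minimizer by examining the sign of its first forward difference, rather than by analysing the continuous vertex. Writing $a=\mathrm{Re}(\alpha)$ and using \eqref{def:p}, I would first record that
\begin{equation*}
\mathrm{Re}(p(r,\alpha,n))=ar-\frac{na}{2}-2r(n-r)=2r^2+(a-2n)r-\frac{na}{2},
\end{equation*}
a real quadratic in $r$ with positive leading coefficient.

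The key step is to compute the forward difference. A direct subtraction gives
\begin{equation*}
d(r):=\mathrm{Re}(p(r+1,\alpha,n))-\mathrm{Re}(p(r,\alpha,n))=a+4r+2-2n,
\end{equation*}
which is strictly increasing in $r$ (its own difference is $4>0$). I would then argue that the finite sequence $\{\mathrm{Re}(p(r,\alpha,n))\}_{r=0}^{n}$ is therefore strictly convex, hence unimodal: it strictly decreases while $d(r)<0$ and strictly increases once $d(r)>0$. This reduces the whole problem to a sign analysis of the single affine expression $d(r)$.

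From unimodality, an interior minimizer $r_c=j$ with $1\le j\le n-1$ is characterized by $d(j-1)<0<d(j)$. I would substitute $d$ and solve the two inequalities, obtaining exactly
\begin{equation*}
2n-4j-2<a<2n-4j+2,
\end{equation*}
the stated condition. At the left endpoint only the comparison with $r=1$ is available, so $r_c=0$ is equivalent to $d(0)>0$, i.e. $a>2n-2$; symmetrically $r_c=n$ is equivalent to $d(n-1)<0$, i.e. $a<-2n+2$. Assembling the three cases reproduces \eqref{eq:rc}.

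I do not expect a serious obstacle here; the only points requiring care are handling the constraint $0\le r\le n$ so that the two endpoints use one-sided rather than two-sided conditions, and observing that the open intervals in \eqref{eq:rc} are exactly what excludes the tie values $a=2n-4j\pm 2$, at which $d(r)=0$ for some integer $r$ and the minimizer ceases to be unique. Away from those boundary values the minimizer is unique, so the piecewise formula \eqref{eq:rc} holds.
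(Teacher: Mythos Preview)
Your argument is correct. The forward difference $d(r)=a+4r+2-2n$ is computed correctly, strict convexity of the integer sequence follows, and the one-sided endpoint conditions are handled properly; the resulting inequalities match \eqref{eq:rc} exactly.

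The paper's proof reaches the same conclusion by the complementary route you explicitly set aside: it locates the continuous vertex $r_{\min}=\frac{2n-\re(\alpha)}{4}$ of the upward parabola and then argues that the integer minimizer is the nearest integer to $r_{\min}$ (clipped to $\{0,\dots,n\}$), splitting on whether $r_{\min}$ lies in $[j,j+\tfrac12)$ or $(j+\tfrac12,j+1]$. Your forward-difference criterion $d(j-1)<0<d(j)$ is algebraically equivalent to $j-\tfrac12<r_{\min}<j+\tfrac12$, so the two arguments are really the same computation read in two ways. Your version has the minor advantage that it never invokes the continuous extension at all and makes the exclusion of the tie values $a=2n-4j\pm 2$ transparent via $d(r)=0$; the paper's version makes the ``round to nearest'' picture more visible and immediately yields the compact floor-function form noted in the subsequent remark.
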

\begin{proof}
Since $0\leq r\leq n$ and $r\in \mathbb{Z}$, $p(r,\alpha,n)$ only takes values on that discrete set. It is clear that $\re(p(r,\alpha,n))$ is an upward parabola in variable $r$ and has a minimum value at $r_{\min}(n,\alpha)=\frac{2n-\re(\alpha)}{4}$. We will discuss different cases of relative positions between $r_{\min}(\alpha,n)$ and $r_c(\alpha,n)$. If $r_{\min}(\alpha,n)\leq 0$, then $r_c(\alpha,n)=0$. If $r_{\min}(\alpha,n)\geq n$, then $r_c(\alpha,n)=n$. Let $0\leq j\leq n$ and $j\leq r_{\min}(\alpha,n)\leq j+1$. If $j\leq r_{\min}(\alpha,n)< j+\frac{1}{2}$, then $r_c(\alpha,n)=j$. If $j+\frac{1}{2}< r_{\min}(\alpha,n)\leq j+1$, then $r_c(\alpha,n)=j+1$. In other words,
\begin{itemize}
    \item $r_c(\alpha,n)=0$ when $\frac{2n-\re(\alpha)}{4}<\frac{1}{2}$,
    \item $r_c(\alpha,n)=j$ when $j-\frac{1}{2}<\frac{2n-\re(\alpha)}{4}<j+\frac{1}{2}$ and $j=1,2,\ldots, n-1$,
    \item $r_c(\alpha,n)=n$ when $\frac{2n-\re(\alpha)}{4}>n-\frac{1}{2}$.
\end{itemize}
These conditions can be rewritten as \eqref{eq:rc}.
\end{proof}
\begin{remark}
 The floor function gives a more compact form for $r_c(\alpha,n)$. Indeed $\forall$ $0\leq j\leq n$, we know $r_c(\alpha,n)=j$ if and only if $j-\frac{1}{2}\leq \frac{n}{2}-\frac{\re(\alpha)}{4}\leq j+\frac{1}{2}$ or $j\leq \frac{n}{2}-\frac{\re(\alpha)}{4}+\frac{1}{2}\leq j+1$. Therefore
\begin{equation*}
r_c(\alpha,n)=
    \begin{cases}
        n & \text{if } {\re(\alpha)}{}<2-2n,\\
        \lfloor \frac{n}{2}-\frac{\re(\alpha)}{4}+\frac{1}{2}\rfloor & \text{if } 2-2n<{\re(\alpha)}{}<2n-2,\\
        0 &\text{if } \re(\alpha)>2n-2.
    \end{cases}
\end{equation*}
\end{remark}
\begin{remark}\label{rem:complex_alpha}
    We avoid the case $\re(\alpha)=2n-4j-2$, $j=0,\ldots,n-1$, $\alpha\notin 2\mathbb{Z}$ to make sure that leading contribution in the asymptotics comes from the one value of $r_c$. Otherwise we need to include two contributions in the leading term of asymptotics of $\Delta_n(x,\alpha)$, which produces qualitatively oscillating solutions. See Appendix \ref{app:boundary_case} for details.
\end{remark}
As the result we have
    \begin{equation}
       \begin{split}
&\Delta_n(x,\alpha)\sim\frac{(-1)^{\frac{n(n-1)}{2}}}{(2\pi \ii)^{n}}\frac{c_1^{r_c}c_2^{n-r_c}}{r_c!(n-r_c)!}\left(\frac{x}{2}\right)^{p(r_c,\alpha,n)}\intop_{\Gamma_1}\ldots \intop_{\Gamma_1}\prod_{{1\leq j< k\leq r_c}}(s_j-s_k)^2
           \prod_{l=1}^{r_c}\ee^{s_l}s_l^{-\frac{\alpha}{2}-2r_c+n}ds_l\\&\times\intop_{\Gamma_2}\ldots \intop_{\Gamma_2}\prod_{{1\leq j< k\leq n-r_c}}(s_j-s_k)^2\prod_{l=1}^{n-r_c}\ee^{-\frac{1}{s_l}}s_l^{-\frac{\alpha}{2}-n}ds_l,\quad \text{ as }x\to 0,\quad x>0.
\end{split}\end{equation}
Denote 
\begin{equation}\label{def:H1}
H_1(\alpha,n)=\intop_{\Gamma_1}\ldots \intop_{\Gamma_1}\prod_{\substack{1\leq j< k\leq r_c}}(s_j-s_k)^2\prod_{\substack{l=1}}^{r_c}\ee^{s_l}s_l^{-\frac{\alpha}{2}-2r_c+n}ds_l
\end{equation}
and
\begin{equation}\label{def:H2}
    H_{2}(\alpha,n)=\intop_{\Gamma_2}\ldots \intop_{\Gamma_2}\prod_{\substack{1\leq j< k\leq n-r_c}}(s_j-s_k)^2\prod_{l=1}^{n-r_c}\ee^{-\frac{1}{s_l}}s_l^{-\frac{\alpha}{2}-n}ds_l.
\end{equation}
To evaluate \eqref{def:H1}, \eqref{def:H2}, we reduce them to multiple integrals with Laguerre weight $w(x)=\ee^{-x}x^{\alpha}$, $\re(\alpha)>-1$ on the contour $\Gamma=[0,\infty)$.
In $H_1(\alpha,n)$, we make the change of variable $s=\widetilde{s}\ee^{-\ii\pi}$. More specifically the modulus and argument of the variable transform as
\begin{align*}
    &|s|=|\widetilde{s}|\\&\arg(s)=\arg(\widetilde{s})-\pi, \quad 0<\arg(\widetilde{s})<2\pi.
\end{align*}
The contour $\Gamma_{1}$ becomes $\widetilde{\Gamma_{1}}$ as shown in Figure \ref{fig:gamma1tilde}. 
\begin{figure}[ht]
	\centering
	\includegraphics{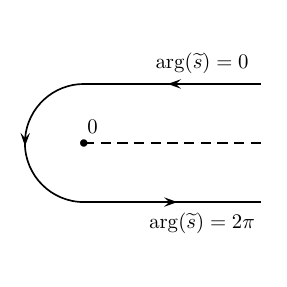}
        \caption{Contour $\widetilde{\Gamma_1}$}
        \label{fig:gamma1tilde}
\end{figure}
\\Also notice that 
\begin{align*}
    s_l^{-\frac{\alpha}{2}-2r_c+n}&=\ee^{(-\frac{\alpha}{2}-2r_c+n)\ln|s_l|+(-\frac{\alpha}{2}-2r_c+n)i\arg(s_{l})}\\&=\ee^{(-\frac{\alpha}{2}-2r_c+n)\ln|\widetilde{s_l}|+(-\frac{\alpha}{2}-2r_c+n)i(\arg(\widetilde{s_{l}})-\pi)}\\&=\widetilde{s_l}^{-\frac{\alpha}{2}-2r_c+n}\ee^{-\ii\pi(-\frac{\alpha}{2}-2r_c+n)}
\end{align*}
As the result we have
\begin{align*}
    H_1(\alpha,n)&=\intop_{\widetilde{\Gamma_1}}\ldots \intop_{\widetilde{\Gamma_1}}\prod_{\substack{1\leq j< k\leq r_c}}(\widetilde{s_j}-\widetilde{s_k})^2\prod_{l=1}^{r_c}\ee^{-\widetilde{s_l}}\widetilde{s_l}^{-\frac{\alpha}{2}-2r_c+n}\ee^{-\ii\pi(-\frac{\alpha}{2}-2r_c+n)}(-1)d\widetilde{s_l}\\&=(-1)^{r_{c}}(\ee^{-\ii\pi(-\frac{\alpha}{2}-2r_c+n)})^{r_{c}}\intop_{\widetilde{\Gamma_1}}\ldots \intop_{\widetilde{\Gamma_1}}\prod_{\substack{1\leq j< k\leq r_c}}(\widetilde{s_j}-\widetilde{s_k})^2\prod_{\substack{l=1}}^{r_c}\ee^{-\widetilde{s_l}}\widetilde{s_l}^{-\frac{\alpha}{2}-2r_c+n}d\widetilde{s_l}
\end{align*}
To continue our evaluation we need the following Lemma.
\begin{lemma}\label{claim:norm-const}
            Using the properties of power function we can show the following identity \begin{equation}  \label{eq:hook_identity1}\intop_{\widetilde{\Gamma_{1}}}s^j\ee^{-s}s^{\gamma}ds=({\ee^{2\pi \ii \gamma}-1})\intop_{0}^{\infty}s^j\ee^{-s}s^{\gamma}ds,\quad j\in\mathbb{Z}, \quad j+\re(\gamma)>-1.
        \end{equation}
\end{lemma}
With the aid of \eqref{eq:hook_identity1} we can rewrite our contour integrals in terms of real line integrals under the convergence condition $\re(\alpha)<2n+2-4r_c$. 
\begin{align*}
    H_1(\alpha,n)&=(-1)^{r_{c}}(\ee^{-\ii\pi(-\frac{\alpha}{2}-2r_c+n)})^{r_{c}}(\ee^{2\pi \ii(-\frac{\alpha}{2}-2r_c+n)}-1)^{r_{c}}\\&\times\intop_{0}^{\infty}\ldots \intop_{0}^{\infty}\prod_{\substack{1\leq j< k\leq r_c}}(\widetilde{s_j}-\widetilde{s_k})^2\prod_{\substack{l=1}}^{r_c}\ee^{-\widetilde{s_l}}\widetilde{s_l}^{-\frac{\alpha}{2}-2r_c+n}d\widetilde{s_l}
\end{align*}
By using \cite[\href{http://dlmf.nist.gov/5.14.E5}{(5.14.5)}]{DLMF} we get for $\re(\alpha)<2n+2-4r_c$
\begin{equation}\label{eq:H1_formula}
    H_1(\alpha,n)=(-1)^{nr_c}(2\ii)^{r_c}\sin^{r_c}\left(\frac{\pi\alpha}{2}\right)\frac{G\left(1+n-r_c-\frac{\alpha}{2}\right)G(r_c+2)}{G\left(1-\frac{\alpha}{2}-2r_c+n\right)}.
\end{equation}
We can notice that left and right hand sides of \eqref{eq:H1_formula} are entire functions of $\alpha$. Therefore by the uniqueness of analytic continuation we can say that \eqref{eq:H1_formula} holds for all $\alpha\in\mathbb{C}$.

Similarly, in $ H_2(\alpha,n)$, we put $s=\frac{1}{\widetilde{s}}$. Then the modulus and argument of the variable respectively transform as
\begin{align*}
    &|s|=\frac{1}{|\widetilde{s}|}\\&\arg(\widetilde{s})=-\arg(s), \quad -2\pi<\arg(\widetilde{s})<0
\end{align*}
The contour $\Gamma_{2}$ becomes $\widetilde{\Gamma_{2}}$ as shown in the Figure \ref{fig:gamma2tilde}.
\begin{figure}[ht]
	\centering
\includegraphics{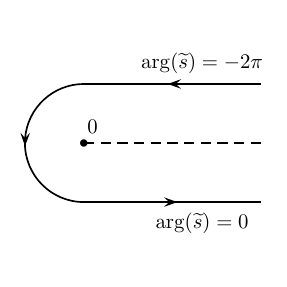}
        \caption{Contour $\widetilde{\Gamma_2}$}
        \label{fig:gamma2tilde}
\end{figure}

The second multi-integral becomes
\begin{align*}
H_2(\alpha,n)&=\intop_{\widetilde{\Gamma_2}}\ldots \intop_{\widetilde{\Gamma_2}}\prod_{\substack{1\leq j< k\leq n-r_c}}\left(\frac{1}{\widetilde{s_j}}-\frac{1}{\widetilde{s_k}}\right)^2\prod_{\substack{l=1}}^{n-r_c}\ee^{-\widetilde{s_l}}\widetilde{s_l}^{\frac{\alpha}{2}+n}\left(-\frac{1}{s_l^2}\right)d\widetilde{s_l}\\&=(-1)^{n-r_{c}}\intop_{\widetilde{\Gamma_2}}\ldots \intop_{\widetilde{\Gamma_2}}\prod_{\substack{1\leq j< k\leq n-r_c}}\left(\frac{\widetilde{s_j}-\widetilde{s_k}}{\widetilde{s_j}\widetilde{s_k}}\right)^2\prod_{\substack{l=1}}^{n-r_c}\ee^{-\widetilde{s_l}}\widetilde{s_l}^{\frac{\alpha}{2}+n-2}d\widetilde{s_l}\\&=(-1)^{n-r_{c}}\intop_{\widetilde{\Gamma_2}}\ldots \intop_{\widetilde{\Gamma_2}}\prod_{\substack{1\leq j< k\leq n-r_c}}(\widetilde{s_j}-\widetilde{s_k})^2\prod_{\substack{l=1}}^{n-r_c}\widetilde{s_{l}}^{2r_{c}+2-2n}\ee^{-\widetilde{s_l}}\widetilde{s_l}^{\frac{\alpha}{2}+n-2}d\widetilde{s_l}\\&=(-1)^{n-r_{c}}\intop_{\widetilde{\Gamma_2}}\ldots \intop_{\widetilde{\Gamma_2}}\prod_{\substack{1\leq j< k\leq n-r_c}}(\widetilde{s_j}-\widetilde{s_k})^2\prod_{\substack{l=1}}^{n-r_c}\ee^{-\widetilde{s_l}}\widetilde{s_l}^{\frac{\alpha}{2}+2r_{c}-n}d\widetilde{s_l}
\end{align*}
Here we used the square of \eqref{eq:gamma2}
\begin{equation*}
    \prod_{\substack{1\leq j< k\leq n-r_c}}\left(\frac{\widetilde{s_j}-\widetilde{s_k}}{\widetilde{s_j}\widetilde{s_k}}\right)^2=\prod_{\substack{1\leq j< k\leq n-r_c}}(\widetilde{s_j}-\widetilde{s_k})^2\prod_{\substack{j\not= k\\1\leq j,k\leq n-r_c}}\frac{1}{\widetilde{s_j}\widetilde{s_k}}=\prod_{\substack{1\leq j< k\leq n-r_c}}(\widetilde{s_j}-\widetilde{s_k})^2\prod_{l=1}^{n-r_c}\widetilde{s_l}^{2r_c+2-2n}
\end{equation*}
To continue our evaluation we need the following analog of Lemma \ref{claim:norm-const}.
\begin{lemma}
            Using the properties of power function we can show the following identity  \begin{equation}  \label{eq:hook_identity2}\intop_{\widetilde{\Gamma_{2}}}s^j\ee^{-s}s^{\gamma}ds=({1-\ee^{-2\pi \ii \gamma}})\intop_{0}^{\infty}s^j\ee^{-s}s^{\gamma}ds,\quad j\in\mathbb{Z}, \quad j+\re(\gamma)>-1.
        \end{equation}
\end{lemma}
With the aid of \eqref{eq:hook_identity2} we can rewrite our contour integrals in terms of real line integrals under the convergence condition $\re(\alpha)>2n-2-4r_c$ 
\begin{align}
     H_2(\alpha,n)&=(-1)^{n-r_{c}}\intop_{\widetilde{\Gamma_2}}\ldots \intop_{\widetilde{\Gamma_2}}\prod_{\substack{1\leq j< k\leq n-r_c}}(\widetilde{s_j}-\widetilde{s_k})^2\prod_{\substack{l=1}}^{n-r_c}\ee^{-\widetilde{s_l}}\widetilde{s_l}^{\frac{\alpha}{2}+2r_c-n}d\widetilde{s_l}\\&=(-1)^{n-r_{c}}(1-\ee^{-2\pi \ii(\frac{\alpha}{2}+2r_c-n)})^{n-r_{c}}\intop_{0}^{\infty}\ldots \intop_{0}^{\infty}\prod_{1\leq j< k\leq n-r_c}(\widetilde{s_j}-\widetilde{s_k})^2\prod_{\substack{l=1}}^{n-r_c}\ee^{-\widetilde{s_l}}\widetilde{s_l}^{\frac{\alpha}{2}+2r_c-n}d\widetilde{s_l}
\end{align}
Using \cite[\href{http://dlmf.nist.gov/5.14.E5}{(5.14.5)}]{DLMF} we arrive to the formula
\begin{align}
H_2(\alpha,n)=(-1)^{n-r_{c}}\ee^{-\ii\pi(n-r_c) \frac{\alpha}{2}}(2\ii)^{n-r_c}\sin^{n-r_c}\left(\frac{\pi\alpha}{2}\right)\frac{{G(\frac{\alpha}{2}+r_c+1)}G(n+2-r_c)}{G(\frac{\alpha}{2}+2r_c-n+1)}\label{eq:H2_formula}
\end{align}
We repeat the uniqueness of analytic continuation argument above to claim that formula \eqref{eq:H2_formula}  holds for all $\alpha\in\mathbb{C}$. Combining \eqref{eq:H1_formula}, \eqref{eq:H2_formula}, \eqref{eq:hankel_alternative} and the definitions \eqref{def:H1}, \eqref{def:H2} we get
\begin{equation}
       \begin{split}
&\Delta_n(x,\alpha)\sim\frac{(-1)^{\frac{n(n-1)}{2}}}{\pi^{n}}{c_1^{r_c}c_2^{n-r_c}}\left(\frac{x}{2}\right)^{p(r_c,\alpha,n)}(-1)^{n-r_{c}+nr_c}\ee^{-\ii\pi(n-r_c) \frac{\alpha}{2}}\sin^{n}\left(\frac{\pi\alpha}{2}\right)\\&\frac{{G(\frac{\alpha}{2}+r_c+1)}G(n+1-r_c)G\left(1+n-r_c-\frac{\alpha}{2}\right)G(r_c+1)}{G(\frac{\alpha}{2}+2r_c-n+1)G\left(1-\frac{\alpha}{2}-2r_c+n\right)},\quad \text{ as }x\to 0, \quad x>0.
\end{split}\end{equation}
Using definitions \eqref{def:p}, \eqref{def:c1_c2} and formula \eqref{eq:rc} we finish the proof of Theorem \ref{thm:hankel_bessel_det_asymptotic}  for $x>0$. 
\subsection{Proof of Theorem \ref{thm:hankel_bessel_det_asymptotic}}\label{sec: proof_thm_1.1}
We observe that the asymptotics $x\to 0$, $x>0$ of the Bessel function given by \eqref{eq:bessel_series} holds for $-\pi<\arg(x)<\pi$. The same is true for the cylinder function $\mathcal{C}_\nu(x)$. Since the Toeplitz determinant $\Delta_n(x,\alpha)$ is the linear combination of products of cylinder functions, its asymptotic formula is obtained by inserting \eqref{eq:bessel_series} and computing the leading term. As a result, our formula obtained initially for $x>0$ is also valid for the entire sector $-\pi<\arg(x)<\pi$. 

If we try to use the multiple contour integral representation \eqref{eq:tau-formula} to get asymptotics for complex values of $x$, we would notice that it is valid only for $-\frac{\pi}{2}<\arg(x)<\frac{\pi}{2}$. To follow this path further, we would need to use the analytic continuation formulas presented in the Appendix \ref{sec:analytic_continuation}. We checked that this computation confirms Theorem \ref{thm:hankel_bessel_det_asymptotic} and we present it in Appendix \ref{app:alternative_proof_thm_1.1}.  
\section{Asymptotics of special function solutions at zero}
\label{sec:qn_asymptotics}
To deduce the asymptotics of $u_n(x,\alpha)$, we need to use our main result Theorem \ref{thm:hankel_bessel_det_asymptotic} and Proposition \ref{prop:q_n_formula}. We have to shift the index of the Toeplitz determinant $n$ and the parameter $\alpha$. 
\subsection{Piecewise function for the power of \texorpdfstring{$x$}{x} in the asymptotic of \texorpdfstring{$u_n(x,\alpha)$}{un(x,alpha)}.}

Let's introduce notation for the power of $x$ in the asymptotic of $\Delta_n(x,\alpha)$ in Theorem \ref{thm:hankel_bessel_det_asymptotic}
\begin{align}
p_c(\alpha,n)=p(r_c(\alpha,n),\alpha,n)=\begin{cases}
-\frac{\alpha n}{2},&\text{ if }\re(\alpha)>2n-2,\\
(\alpha-2n+2j)j-\frac{\alpha n}{2},&\text{ if }2n-4j-2<\re(\alpha)<2n-4j+2,\\&\text{ for }{j=1,\ldots,n-1}\\
\frac{\alpha n}{2},&\text{ if }\re(\alpha)<-2n+2
\end{cases}\label{eq:pc}
\end{align}
Then the power of $x$ in the asymptotic of $u_n(x,\alpha)$ based on Proposition \ref{prop:q_n_formula} is given by
\begin{align}
e(\alpha,n)=p_c(\alpha-2,n+1)-p_c(\alpha-2,n)+p_c(\alpha,n)-p_c(\alpha,n+1).\label{eq:ec}
\end{align}
\begin{lemma}\label{thm:exp of leading term}
The piecewise function for the power of $x$ in the asymptotic of $u_n(x,\alpha)$ is given by 
\begin{equation}
e(\alpha,n)=
    \begin{cases}
        1 & \text{if } \re(\alpha)>2+2n\\
        \alpha-2n+4j-1 & \text{if } 2n-4j<\re(\alpha)<2n-4j+2 \text{ and }j=0, 1,\ldots,n\\
        -\alpha+2n-4j-1 &\text{if } 2n-4j-2<\re(\alpha)<2n-4j \text{ and }j=0, 1,\ldots,n-1\\
        -1 &\text{if } \re(\alpha)<-2n
    \end{cases}
    \label{eq:e}
\end{equation}
\end{lemma}
\begin{proof}
    We plug in formula \eqref{eq:pc} in the expression \eqref{eq:ec}.\\
    If $\re(\alpha) > 2+2n$, then we have
    \begin{align}
        &p_c(\alpha,n)=-\frac{\alpha n}{2},& p_c(\alpha,n+1)=-\frac{\alpha n}{2}-\frac{\alpha}{2},\\&p_c(\alpha-2,n)={-\frac{\alpha n}{2}+n},&p_c(\alpha-2,n+1)=-\frac{\alpha n}{2}+n+1-\frac{\alpha}{2}.
    \end{align}
    That confirms the first case.
    Similarly, if $2n-4j< \re(\alpha)<2n-4j+2$ and $j=0, 1,\ldots,n$, then we have:
     \begin{align*}
        &p_c(\alpha,n)=(\alpha-2n+2j)j-\frac{\alpha n}{2},\\
        &p_c(\alpha,n+1)=(\alpha-2n+2j)j-2j-\frac{\alpha n}{2}-\frac{\alpha}{2},\\
        &p_c(\alpha-2,n)=(\alpha-2n+2j)j-2j-\frac{\alpha n}{2}+n,\\
        &p_c(\alpha-2,n+1)= (\alpha-2n+2j)j-\frac{\alpha n}{2}-n-1+\frac{\alpha}{2}.
     \end{align*}
     Notice that for $p_c(\alpha-2,n+1)$ we had to shift the index $j$ to $j+1$ in \eqref{eq:pc} to get the correct formula.
    We confirmed the second case.
     If $2n-4j-2<\re(\alpha)<2n-4j$ and $j=0, 1,\ldots,n-1 $, then we have
     \begin{align*}
        &p_c(\alpha,n)=(\alpha-2n+2j)j-\frac{\alpha n}{2},\\
        &p_c(\alpha,n+1)=(\alpha-2n+2j)j-2n+2j-\frac{\alpha n}{2}+\frac{\alpha}{2},\\
        &p_c(\alpha-2,n)=(\alpha-2n+2j)j+\alpha-n+2j-\frac{\alpha n}{2},\\
        &p_c(\alpha-2,n+1)= (\alpha-2n+2j)j-\frac{\alpha n}{2}-n-1+\frac{\alpha}{2}.
     \end{align*}
    Notice that now we had to change the index $j$ to $j+1$ in \eqref{eq:pc} for $p_c(\alpha-2,n+1)$, $p_c(\alpha,n+1)$, and $p_c(\alpha-2,n)$ to obtain the correct formula. We confirmed the third case.
     Finally if $\re(\alpha)<-2n$, then all terms in \eqref{eq:ec} change sign compared to the case $\re(\alpha)>2n-2$. That confirms the last case and finishes the proof of Lemma \ref{thm:exp of leading term}.
\end{proof}
\subsection{Proof of Theorem \ref{thm:q_n_asymptotic}}
In this part, we will compute the coefficients in the asymptotics of Theorem \ref{thm:q_n_asymptotic}. We introduce the notation for the coefficient in the asymptotics of $\Delta_n(x,\alpha)$ in Theorem \ref{thm:hankel_bessel_det_asymptotic}. We included factor of $2$ inside of power of $x$ for convenience
\begin{align}
c(\alpha,n)=\begin{cases}
(-1)^{\frac{n(n+1)}{2}}\left(\frac{d_2}{\pi}\right)^{n}\frac{G(n+1)G(\frac{\alpha}{2}+1)}{G(\frac{\alpha}{2}-n+1)},&\text{ if }\re(\alpha)>2n-2,\\ \\
(-1)^{\frac{n(n-1)}{2}+nj+n-j}\left( \frac{d_2}{\pi}\right)^n\left(\frac{d_1}{d_2}\sin\left(\frac{\pi\alpha}{2}\right)+\cos\left(\frac{\pi\alpha}{2}\right)\right)^j&\text{ if }2n-4j-2<\re(\alpha)<2n-4j+2,\\\times\frac{G(n-j+1)G(-\frac{\alpha}{2}+n-j+1)G(j+1)G(j+\frac{\alpha}{2}+1)}{G(-\frac{\alpha}{2}+n-2j+1)G(\frac{\alpha}{2}-n+2j+1)},&\text{ for }{j=1,\ldots,n-1}\\ \\
\frac{(-1)^{\frac{n(n-1)}{2}+n^2}}{\pi^n}\left(d_1\sin\left(\frac{\pi\alpha}{2}\right)+d_2\cos\left(\frac{\pi\alpha}{2}\right)\right)^{n}&\text{ if }\re(\alpha)<-2n+2\\\times\frac{G(n+1)G(-\frac{\alpha}{2}+1)}{G(-\frac{\alpha}{2}-n+1)},
\end{cases}\label{eq:c}
\end{align}
Let's denote the constant coefficient in the asymptotic of $u_n(x,\alpha)$ as $q(\alpha,n)$. We have
\begin{align}
    q(\alpha,n)=-\frac{c(\alpha-2,n+1)c(\alpha,n)}{c(\alpha-2,n)c(\alpha,n+1)}.\label{def:q_alpha_n}
\end{align}
\begin{lemma}\label{lem:compatible formulae of r_c}
The piecewise formula for $q(\alpha,n)$ is given by
\begin{align}
q(\alpha,n)=\begin{cases}
\frac{2}{2n+2-\alpha},&\text{ if }\re(\alpha)>2n+2,\\ \\
(-1)^{n}\left(\frac{d_1}{d_2}\sin\left(\frac{\pi\alpha}{2}\right)+\cos\left(\frac{\pi\alpha}{2}\right)\right)\left(\frac{\Gamma(-\frac{\alpha}{2}+n-2j+1)}{\Gamma(\frac{\alpha}{2}-n+2j)}\right)^2&\text{ if }2n-4j<\re(\alpha)<2n-4j+2,\\ \times\frac{\Gamma(j+\frac{\alpha}{2})\Gamma(j+1)}{\Gamma(-\frac{\alpha}{2}+n-j+1)\Gamma(n-j+1)},&\text{ for }{j=0,\ldots,n}\\ \\
(-1)^{n}\left(\frac{d_1}{d_2}\sin\left(\frac{\pi\alpha}{2}\right)+\cos\left(\frac{\pi\alpha}{2}\right)\right)^{-1}\left(\frac{\Gamma(\frac{\alpha}{2}-n+2j+1)}{\Gamma(-\frac{\alpha}{2}+n-2j)}\right)^2&\text{ if }2n-4j-2<\re(\alpha)<2n-4j,\\ \times\frac{\Gamma(-\frac{\alpha}{2}+n-j+1)\Gamma(n-j)}{\Gamma(j+\frac{\alpha}{2}+1)\Gamma(j+1)},&\text{ for }{j=0,\ldots,n-1}\\ \\
-\frac{\alpha}{2}-n&\text{ if }\re(\alpha)<-2n
\end{cases}\label{eq:q}
\end{align}
 \end{lemma}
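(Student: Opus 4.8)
The plan is to substitute the piecewise formula \eqref{eq:c} for $c(\alpha,n)$ into the definition \eqref{def:q_alpha_n}, exactly as the power $e(\alpha,n)$ was computed in the proof of Lemma \ref{thm:exp of leading term}. Since $q(\alpha,n)$ is the ratio $-\,c(\alpha-2,n+1)c(\alpha,n)/\big(c(\alpha-2,n)c(\alpha,n+1)\big)$, the first task is, for each of the four ranges of $\re(\alpha)$ listed in \eqref{eq:q}, to decide which of the three subcases of \eqref{eq:c} governs each of the four factors $c(\alpha,n)$, $c(\alpha,n+1)$, $c(\alpha-2,n)$, $c(\alpha-2,n+1)$. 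For the extreme ranges $\re(\alpha)>2n+2$ and $\re(\alpha)<-2n$ all four factors lie in the same subcase (the first, respectively the third), so there is nothing to track. For the two middle ranges $c(\alpha,n)$ stays in the middle subcase with the given index $j$, while replacing $\alpha$ by $\alpha-2$ or $n$ by $n+1$ pushes some of the remaining factors into the neighbouring index $j+1$; this is the very same index shift already recorded in the proof of Lemma \ref{thm:exp of leading term}, which I would reuse verbatim.

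Once the subcases are fixed, I would substitute and cancel factor by factor. The signs and the powers of $d_2/\pi$ (respectively $\pi$) cancel between numerator and denominator. The trigonometric bracket requires care: under $\alpha\mapsto\alpha-2$ one has $\sin(\tfrac{\pi\alpha}{2})\mapsto-\sin(\tfrac{\pi\alpha}{2})$ and $\cos(\tfrac{\pi\alpha}{2})\mapsto-\cos(\tfrac{\pi\alpha}{2})$, so each bracket $\tfrac{d_1}{d_2}\sin(\tfrac{\pi\alpha}{2})+\cos(\tfrac{\pi\alpha}{2})$ coming from $c(\alpha-2,\cdot)$ acquires a sign and carries whichever exponent the index analysis assigns to it. Combining the shifted exponents $j$ and $j+1$ from the two middle ranges leaves a net power $+1$ in the range $2n-4j<\re(\alpha)<2n-4j+2$ and a net power $-1$ in the range $2n-4j-2<\re(\alpha)<2n-4j$, which accounts for the bracket appearing to the first power and to the power $-1$ in \eqref{eq:q}.

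Finally I would collapse the Barnes $G$-functions. After cancellation the surviving quotient always has the shape $G(z+2)G(z)/G(z+1)^2$ or its reciprocal, which the functional equation $G(z+1)=\Gamma(z)G(z)$ reduces to $\frac{\Gamma(z+1)}{\Gamma(z)}=z$ (respectively $1/z$); the remaining $G$-ratios with arguments in $n$ and $j$ turn into the products and ratios of Gamma functions displayed in \eqref{eq:q}. For example, in the range $\re(\alpha)>2n+2$ every factor falls in the first subcase, the surviving ratio is $G(z+1)^2/\big(G(z)G(z+2)\big)=1/z$ with $z=\tfrac{\alpha}{2}-n-1$, and the overall sign in \eqref{def:q_alpha_n} gives $q(\alpha,n)=\tfrac{2}{2n+2-\alpha}$; the range $\re(\alpha)<-2n$ is handled identically and yields $q(\alpha,n)=-\tfrac{\alpha}{2}-n$. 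The main obstacle is not any single computation but the bookkeeping of the two middle ranges: correctly assigning the index $j$ or $j+1$ to each of the four factors, tracking the induced sign of the trigonometric bracket, and then verifying that the Barnes $G$ quotient collapses to precisely the Gamma-function expression claimed in \eqref{eq:q}.
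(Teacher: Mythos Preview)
Your proposal is correct and follows essentially the same approach as the paper: for each range of $\re(\alpha)$ you identify which branch of \eqref{eq:c} (with the appropriate index $j$ or $j+1$) applies to each of the four factors $c(\alpha,n)$, $c(\alpha,n+1)$, $c(\alpha-2,n)$, $c(\alpha-2,n+1)$, substitute, and simplify using $G(z+1)=\Gamma(z)G(z)$. The paper does this by writing out each of the four $c$-values explicitly in every case, whereas you summarize the mechanism (index shifts borrowed from Lemma \ref{thm:exp of leading term}, the sign flip of the trigonometric bracket under $\alpha\mapsto\alpha-2$, and the $G$-function collapse), but the content is the same.
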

\begin{proof}
    For $\re(\alpha)>2n+2$ we have
    \begin{align}
    c(\alpha,n)&=(-1)^{\frac{n(n+1)}{2}}\left(\frac{d_2}{\pi}\right)^{n}\frac{G(n+1)G(\frac{\alpha}{2}+1)}{G(\frac{\alpha}{2}-n+1)},\\
     c(\alpha-2,n)&=(-1)^{\frac{n(n+1)}{2}}\left(\frac{d_2}{\pi}\right)^{n}\frac{G(n+1)G(\frac{\alpha}{2}+1)}{G(\frac{\alpha}{2}-n+1)}\frac{\Gamma(\frac{\alpha}{2}-n)}{\Gamma(\frac{\alpha}{2})},\\
     c(\alpha,n+1)&=(-1)^{\frac{(n+2)(n+1)}{2}}\left(\frac{d_2}{\pi}\right)^{n+1}\frac{G(n+1)G(\frac{\alpha}{2}+1)}{G(\frac{\alpha}{2}-n+1)}{\Gamma(n+1)}{\Gamma\left(\frac{\alpha}{2}-n\right)},\\
     c(\alpha-2,n+1)&=(-1)^{\frac{(n+2)(n+1)}{2}}\left(\frac{d_2}{\pi}\right)^{n+1}\frac{G(n+1)G(\frac{\alpha}{2}+1)}{G(\frac{\alpha}{2}-n+1)}\frac{\Gamma(n+1)\Gamma\left(\frac{\alpha}{2}-n\right)\Gamma\left(\frac{\alpha}{2}-n-1\right)}{\Gamma\left(\frac{\alpha}{2}\right).}
    \end{align}
    That implies 
    \begin{align}
        q(\alpha,n)=-\frac{\Gamma\left(\frac{\alpha}{2}-n-1\right)}{\Gamma\left(\frac{\alpha}{2}-n\right)}=\frac{2}{2n+2-\alpha}.
    \end{align}
    That confirms the first case.
    
    For $2n-4j<\re(\alpha)<2n-4j+2$ we have 
     \begin{align}
    &c(\alpha,n)=(-1)^{\frac{n(n-1)}{2}+nj+n-j}\left( \frac{d_2}{\pi}\right)^n\left(\frac{d_1}{d_2}\sin\left(\frac{\pi\alpha}{2}\right)+\cos\left(\frac{\pi\alpha}{2}\right)\right)^j\\&\times\frac{G(n-j+1)G(-\frac{\alpha}{2}+n-j+1)G(j+1)G(j+\frac{\alpha}{2}+1)}{G(-\frac{\alpha}{2}+n-2j+1)G(\frac{\alpha}{2}-n+2j+1)},\\
     &c(\alpha-2,n)=(-1)^{\frac{n(n-1)}{2}+nj+n}\left( \frac{d_2}{\pi}\right)^n\left(\frac{d_1}{d_2}\sin\left(\frac{\pi\alpha}{2}\right)+\cos\left(\frac{\pi\alpha}{2}\right)\right)^j\\&\times\frac{G(n-j+1)G(-\frac{\alpha}{2}+n-j+1)G(j+1)G(j+\frac{\alpha}{2}+1)}{G(-\frac{\alpha}{2}+n-2j+1)G(\frac{\alpha}{2}-n+2j+1)}\frac{\Gamma(-\frac{\alpha}{2}+n-j+1)\Gamma(\frac{\alpha}{2}-n+2j)}{\Gamma(j+\frac{\alpha}{2})\Gamma(-\frac{\alpha}{2}+n-2j+1)},\\
     &c(\alpha,n+1)=(-1)^{\frac{n(n+1)}{2}+nj+n+1}\left( \frac{d_2}{\pi}\right)^{n+1}\left(\frac{d_1}{d_2}\sin\left(\frac{\pi\alpha}{2}\right)+\cos\left(\frac{\pi\alpha}{2}\right)\right)^j\\&\times\frac{G(n-j+1)G(-\frac{\alpha}{2}+n-j+1)G(j+1)G(j+\frac{\alpha}{2}+1)}{G(-\frac{\alpha}{2}+n-2j+1)G(\frac{\alpha}{2}-n+2j+1)}\frac{\Gamma(-\frac{\alpha}{2}+n-j+1)\Gamma(n-j+1)\Gamma(\frac{\alpha}{2}-n+2j)}{\Gamma(-\frac{\alpha}{2}+n-2j+1)},\\
     &c(\alpha-2,n+1)=(-1)^{\frac{n(n+1)}{2}+nj+j+2n+2}\left( \frac{d_2}{\pi}\right)^{n+1}\left(\frac{d_1}{d_2}\sin\left(\frac{\pi\alpha}{2}\right)+\cos\left(\frac{\pi\alpha}{2}\right)\right)^{j+1}\\&\times\frac{G(n-j+1)G(-\frac{\alpha}{2}+n-j+1)G(j+1)G(j+\frac{\alpha}{2}+1)}{G(-\frac{\alpha}{2}+n-2j+1)G(\frac{\alpha}{2}-n+2j+1)}{\Gamma(j+1)\Gamma\left(-\frac{\alpha}{2}+n-j+1\right)}{}.
    \end{align}
    Combining these formulas we confirm the second case.   Notice that for $c(\alpha-2,n+1)$ we had to shift index $j$ to $j+1$ in \eqref{eq:c} to get correct formula.
    
    For $2n-4j-2<\re(\alpha)<2n-4j$ we have 
     \begin{align}
    &c(\alpha,n)=(-1)^{\frac{n(n-1)}{2}+nj+n-j}\left( \frac{d_2}{\pi}\right)^n\left(\frac{d_1}{d_2}\sin\left(\frac{\pi\alpha}{2}\right)+\cos\left(\frac{\pi\alpha}{2}\right)\right)^j\\&\times\frac{G(n-j+1)G(-\frac{\alpha}{2}+n-j+1)G(j+1)G(j+\frac{\alpha}{2}+1)}{G(-\frac{\alpha}{2}+n-2j+1)G(\frac{\alpha}{2}-n+2j+1)},\\
     &c(\alpha-2,n)=(-1)^{\frac{n(n-1)}{2}+nj+2n}\left( \frac{d_2}{\pi}\right)^n\left(\frac{d_1}{d_2}\sin\left(\frac{\pi\alpha}{2}\right)+\cos\left(\frac{\pi\alpha}{2}\right)\right)^{j+1}\\&\times\frac{G(n-j+1)G(-\frac{\alpha}{2}+n-j+1)G(j+1)G(j+\frac{\alpha}{2}+1)}{G(-\frac{\alpha}{2}+n-2j+1)G(\frac{\alpha}{2}-n+2j+1)}\frac{\Gamma(j+1)\Gamma(-\frac{\alpha}{2}+n-2j)}{\Gamma(n-j)\Gamma(\frac{\alpha}{2}-n+2j+1)},\\
     &c(\alpha,n+1)=(-1)^{\frac{n(n+1)}{2}+nj+2n+1}\left( \frac{d_2}{\pi}\right)^{n+1}\left(\frac{d_1}{d_2}\sin\left(\frac{\pi\alpha}{2}\right)+\cos\left(\frac{\pi\alpha}{2}\right)\right)^{j+1}\\&\times\frac{G(n-j+1)G(-\frac{\alpha}{2}+n-j+1)G(j+1)G(j+\frac{\alpha}{2}+1)}{G(-\frac{\alpha}{2}+n-2j+1)G(\frac{\alpha}{2}-n+2j+1)}\frac{\Gamma(j+1)\Gamma(j+\frac{\alpha}{2}+1)\Gamma(-\frac{\alpha}{2}+n-2j)}{\Gamma(\frac{\alpha}{2}-n+2j+1)},\\
     &c(\alpha-2,n+1)=(-1)^{\frac{n(n+1)}{2}+nj+2n+j+2}\left( \frac{d_2}{\pi}\right)^{n+1}\left(\frac{d_1}{d_2}\sin\left(\frac{\pi\alpha}{2}\right)+\cos\left(\frac{\pi\alpha}{2}\right)\right)^{j+1}\\&\times\frac{G(n-j+1)G(-\frac{\alpha}{2}+n-j+1)G(j+1)G(j+\frac{\alpha}{2}+1)}{G(-\frac{\alpha}{2}+n-2j+1)G(\frac{\alpha}{2}-n+2j+1)}{\Gamma(j+1)\Gamma\left(-\frac{\alpha}{2}+n-j+1\right)}{}.
    \end{align}
    That confirms the third case.     Notice that now we had to shift index $j$ to $j+1$ in \eqref{eq:c} for $c(\alpha-2,n+1)$, $c(\alpha,n+1)$, and $c(\alpha-2,n)$ to get correct formula.

    Finally for $\re(\alpha)<-2n$ we have 
 \begin{align}
c(\alpha,n)=&\frac{(-1)^{\frac{n(n-1)}{2}+n^2}}{\pi^n}\left(d_1\sin\left(\frac{\pi\alpha}{2}\right)+d_2\cos\left(\frac{\pi\alpha}{2}\right)\right)^{n}\frac{G(n+1)G(-\frac{\alpha}{2}+1)}{G(-\frac{\alpha}{2}-n+1)},\\
c(\alpha-2,n)=&\frac{(-1)^{\frac{n(n-1)}{2}+n^2+n}}{\pi^n}\left(d_1\sin\left(\frac{\pi\alpha}{2}\right)+d_2\cos\left(\frac{\pi\alpha}{2}\right)\right)^{n}\frac{G(n+1)G(-\frac{\alpha}{2}+1)}{G(-\frac{\alpha}{2}-n+1)}\frac{\Gamma(-\frac{\alpha}{2}+1)}{\Gamma(-\frac{\alpha}{2}-n+1)},\\
c(\alpha,n+1)=&\frac{(-1)^{\frac{(n+1)n}{2}+(n+1)^2}}{\pi^{n+1}}\left(d_1\sin\left(\frac{\pi\alpha}{2}\right)+d_2\cos\left(\frac{\pi\alpha}{2}\right)\right)^{n+1}\\&\times\frac{G(n+1)G(-\frac{\alpha}{2}+1)}{G(-\frac{\alpha}{2}-n+1)}{\Gamma(n+1)}{\Gamma\left(-\frac{\alpha}{2}-n\right)},\\
c(\alpha-2,n+1)=&\frac{(-1)^{\frac{(n+1)n}{2}+n+1+(n+1)^2}}{\pi^{n+1}}\left(d_1\sin\left(\frac{\pi\alpha}{2}\right)+d_2\cos\left(\frac{\pi\alpha}{2}\right)\right)^{n+1}\\&\times\frac{G(n+1)G(-\frac{\alpha}{2}+1)}{G(-\frac{\alpha}{2}-n+1)}{\Gamma(n+1)}{\Gamma\left(-\frac{\alpha}{2}+1\right)}.
    \end{align}
        That implies 
    \begin{align}
        q(\alpha,n)=\frac{\Gamma\left(-\frac{\alpha}{2}-n+1\right)}{\Gamma\left(-\frac{\alpha}{2}-n\right)}=-\frac{\alpha}{2}-n.
    \end{align}
    That confirms the last case.
\end{proof}
\section{Asymptotics at infinity}
\subsection{Asymptotics of Toeplitz determinant}\label{sec:toeplitz_asym_at_infty}
To compute the asymptotics at infinity, it is convenient to use the Hankel functions instead of the Bessel functions. We rewrite the formula \eqref{def:f_nu} as \eqref{def:alt_f_nu_hankel}.
Following the same argument as in Theorem \ref{thm:tau-integral-formula} we get the following multiple integral representation using the Andr\`eief identity. 
\begin{theorem} \label{thm:tau-integral-formula-Hankel}
The Toeplitz determinant $\Delta_n(x,\alpha)$ given by \eqref{def:hankel_bessel_determinant} can be rewritten as
    \begin{equation}
\Delta_n(x,\alpha)=\frac{1}{n!}\intop_{\Gamma_3\cup\Gamma_4}\ldots \intop_{\Gamma_3\cup\Gamma_4}\prod_{1\leq j<k\leq n}(t_k-t_j)\left(\frac{1}{t_k}-\frac{1}{t_j}\right)\prod_{k=1}^{n} h_2(t_k)dt_k \label{eq:alt-tau-formula}
        \end{equation}
        where
        \begin{equation*}
        h_2(t)=\frac{\ee^{\frac{x}{2}\left(t-\frac{1}{t}\right)}}{ 2\pi \ii t^{1+\frac{\alpha}{2}}}\left((d_1-\ii d_2)\chi_{\Gamma_3}(t)-(d_1+\ii d_2)\chi_{ \Gamma_4}(t)\right),
        \end{equation*}
       and contours of integration $\Gamma_3$, $\Gamma_4$ are shown on Figure \ref{fig:contour_gamma34}. We use the notation $\chi_{ \Gamma_j}(t)$ for the characteristic function of contour $\Gamma_j$.  We assume $-\pi<\arg(t)<\pi$ on the contours $\Gamma_3$ and $\Gamma_4$.
\end{theorem}

We would like to compute the large $x$ asymptotics of $\Delta_n(x,\alpha)$. We start by using the expansion \eqref{eq:expansion} and implementing notation \eqref{def:b1b2}
  \begin{align}
&\Delta_n(x,\alpha)=\frac{(-1)^{\frac{n(n-1)}{2}}}{n!}\intop_{\Gamma_3\cup\Gamma_4}\ldots \intop_{\Gamma_3\cup\Gamma_4}\prod_{m=1}^{n}(b_1\chi_{\Gamma_3}(t_m)-b_2\chi_{\Gamma_{4}}(t_m))\prod_{1\leq j<k\leq n}\frac{(t_j-t_k)^2}{t_jt_k}\prod_{l=1}^{n}\frac{\ee^{\frac{x}{2}(t_l-\frac{1}{t_l})}}{\pi \ii t_{l}^{\frac{\alpha}{2}+1}}dt_l\\=&\frac{(-1)^{\frac{n(n-1)}{2}}}{(\pi \ii)^{n}n!}\sum_{r=0}^n\sum_{\substack{I\subset\{1,...,n\}\\ \left\vert I\right\vert=r}}b_1^{r}(-b_2)^{n-r}\intop_{\Gamma_3\cup\Gamma_4}\ldots \intop_{\Gamma_3\cup\Gamma_4}{\prod_{\substack{j<k\\j,k\in I}}\frac{(t_j-t_k)^2}{t_jt_k}}{\prod_{\substack{j<k\\j,k\in I^{c}}}\frac{(t_j-t_k)^2}{t_jt_k}}{\prod_{\substack{j\in I\\k\in I^{c}}}\frac{(t_j-t_k)^2}{t_jt_k}}\\&\prod_{l\in I}\chi_{\Gamma_{3}}(t_l)\prod_{l\in I^c}\chi_{\Gamma_{4}}(t_l)\prod_{l\in I}\frac{\ee^{\frac{x}{2}(t_l-\frac{1}{t_l})}}{t_{l}^{\frac{\alpha}{2}+1}}dt_l\prod_{l\in I^{c}}\frac{\ee^{\frac{x}{2}(t_l-\frac{1}{t_l})}}{t_{l}^{\frac{\alpha}{2}+1}}dt_l
\label{eq:delta_n_expansion1}
        \end{align}
We can see that by renaming variables on the right hand side we can guarantee that $I=\{1,2,\ldots,r\}$, and $I^c=\{r+1,r+2,\ldots,n\}$ for each integral in the sum. Combining the same integrals together we get
\begin{align}
&\Delta_n(x,\alpha)=\frac{(-1)^{\frac{n(n-1)}{2}}}{(\pi \ii)^{n}}\sum_{r=0}^n\frac{b_1^{r}(-b_2)^{n-r}}{r!(n-r)!}\intop_{\Gamma_3}\ldots\intop_{\Gamma_3}\intop_{\Gamma_4}\ldots\intop_{\Gamma_4}\prod_{{1\leq j<k\leq r}}\frac{(t_j-t_k)^2}{t_jt_k}{\prod_{r+1\leq j<k\leq n}\frac{(t_j-t_k)^2}{t_jt_k}}\\&\times{\prod_{{j=1}}^r\prod_{{k=r+1}}^n\frac{(t_j-t_k)^2}{t_jt_k}}\prod_{l=1}^n\frac{\ee^{\frac{x}{2}(t_l-\frac{1}{t_l})}}{t_{l}^{\frac{\alpha}{2}+1}}dt_l{},
\label{eq:delta_n_expansion}
\end{align}
where we have $r$ integrals over $\Gamma_3$ and $n-r$ integrals over $\Gamma_4$.

We would like to compute asymptotics of multiple integrals in \eqref{eq:delta_n_expansion} using the steepest descent method for $x\to \infty$, $x>0$. The critical points of the exponent $\Xi(t)=t-\frac{1}{t}$ are $t=\pm \ii$. We chose the contours $\Gamma_3$ and $\Gamma_4$ as the contours of the steepest descent $\im(\Xi(t))=\im(\Xi(\pm\ii))$. They can be described using cubic equations $(\re \,t)^2(\im\, t\pm2)+\im\, t\,(\im\, t\pm1)^2=0$. The main contribution to the $x\to \infty$ asymptotics comes from the neighborhoods of the critical points. We make a change of variable in the local integrals: $(t_l-\ii)=s_l\ee^{\frac{3\pi \ii}{4}}$ for $l\in {I}$ and $(t_l+\ii)=s_l\ee^{-\frac{3\pi \ii}{4}}$ for $l\in {I^{c}}$. After that we replace the local integrals with real line integrals. 
The result is
the following
\begin{align}\label{eq:det_asym_inf_prelim}
&\Delta_n(x,\alpha)\sim{}{}\sum_{r=0}^n\frac{b_1^{r}b_2^{n-r}}{r!(n-r)!}\frac{4^{r(n-r)}}{\pi^n}{\ee^{\frac{\ii\pi}{4}(n^2-4r+6nr-4r^2)}\ee^{\frac{\ii\pi\alpha}{4}(n-2r)}\ee^{\ii x(2r-n)}}\\&\intop_{\mathbb{R}}\ldots\intop_{\mathbb{R}}{\prod_{1\leq j<k\leq r}{(s_j-s_k)^2}{}}\prod_{l=1}^r{\ee^{-\frac{x}{2}s_l^2}}{}ds_l\intop_{\mathbb{R}}\ldots\intop_{\mathbb{R}}{\prod_{1\leq j<k\leq n-r}{(s_j-s_k)^2}{}}{}\prod_{l=1}^{n-r}{\ee^{-\frac{x}{2}s_l^2}}{}ds_l
,\, x\to\infty,\, x>0.\end{align}
We make the change of variables in the integrals $s\to\frac{s}{\sqrt{x}}$ and evaluate the resulting integrals using standard formula \cite[\href{http://dlmf.nist.gov/5.14.E6}{(5.14.6)}]{DLMF} rewritten in terms of Barnes $G$-function.

\begin{align}\label{eq:delta_asym_inf}
\Delta_n(x,\alpha)\sim&\sum_{r=0}^{n}{b_1^{r}b_2^{n-r}}{}\left(\frac{2}{\pi}\right)^{\frac{n}{2}}4^{r(n-r)}\ee^{\frac{\ii\pi}{4}(n^2-4r+6nr-4r^2)}\ee^{\frac{\ii\pi\alpha}{4}(n-2r)}\\&\times G(r+1)G(n-r+1)\ee^{\ii(2r-n)x}x^{-r^2+nr-\frac{n^2}{2}},\quad x\to\infty,\quad x>0.
\end{align}

We notice that this result can be obtained by plugging in asymptotic series of Hankel functions \eqref{eq:hankel1_asym}, \eqref{eq:hankel2_asym} in the Toeplitz determinant $\Delta_n(x,\alpha)$. As a result, we claim that \eqref{eq:delta_asym_inf} holds for $-\pi<\arg(x)<\pi$.

We observe that the multiple contour integral representation \eqref{eq:alt-tau-formula} holds only for $-\frac{\pi}{2}<\arg{x}<\frac{\pi}{2}$. We could use the analytic continuation formulas presented in the Appendix \ref{sec:analytic_continuation} to extend the asymptotics to other values or $\arg(x)$. We confirmed that this computation produces the same result and present it in the Appendix \ref{app:alternative_proof_thm_1.3}. 

To find the leading term for $x>0$ we observe that
\begin{align}
\mathop{\max}\limits_{0\leq r\leq n}\left(-r^2+nr-\frac{n^2}{2}\right)=\begin{cases}
-\frac{n^2}{4}&\mbox{ when $n$--even using $r=\frac{n}{2}$},\\
-\frac{n^2}{4}-\frac{1}{4}&\mbox{ when $n$--odd using $r=\frac{n\pm 1}{2}$}.
    \end{cases}
\end{align}

For even $n$, we plug in $r=\frac{n}{2}$.  We combine factorials with Barnes $G$-function. Since for even $n$ the number $\frac{3n^2-2n}{4}$ is even, we get
\begin{align}
\Delta_n(x,\alpha)\sim \left(\frac{d_1^2+d_2^2}{2\pi}\right)^{\frac{n}{2}}{}{}\left(G\left(\frac{n}{2}+1\right)\right)^2\left(\frac{x}{4}\right)^{-\frac{n^2}{4}}, \quad x\to\infty ,\, x>0
\end{align}

For odd $n$, we need to combine contributions from $r=\frac{n-1}{2}$ and $r=\frac{n+1}{2}$. 
\begin{align}
\Delta_n(x,\alpha)\sim \left(\frac{d_1^2+d_2^2}{2\pi}\right)^{\frac{n}{2}}G\left(\frac{n+1}{2}\right)G\left(\frac{n+3}{2}\right)\left(\frac{x}{4}\right)^{-\frac{n^2+1}{4}}\ee^{\frac{\ii\pi}{4}(3n^2-2n-1)}\\\times\frac{1}{2}\left(\ee^{\ii\frac{\pi}{4}(\alpha+2-n)-\ii x+\ii\phi}+\ee^{\ii\frac{\pi}{4}(-\alpha-2+n)+\ii x-\ii\phi}\right)\\=\left(\frac{d_1^2+d_2^2}{2\pi}\right)^{\frac{n}{2}}G\left(\frac{n+1}{2}\right)G\left(\frac{n+3}{2}\right)\left(\frac{x}{4}\right)^{-\frac{n^2+1}{4}}(-1)^{\frac{n-1}{2}}\sin\left(x-\phi+\frac{\pi}{4}(n-\alpha)\right), \quad x\to\infty ,\, x>0.
\end{align}
where $\phi=\frac{1}{2\ii}\ln\left({d_1+\ii d_2}\right)-\frac{1}{2\ii}\ln\left({d_1-\ii d_2}\right)$ and $(d_1^2+d_2^2)^\frac
{n}{2}=\ee^{\frac{n}{2}\ln(d_1+\ii d_2)+\frac{n}{2}\ln(d_1-\ii d_2)}$. We see that the expression for asymptotics do not depend on the choice of the branch of the logarithm. For real values of $d_1$ and $d_2$ we have $\phi=\arg(d_1+\ii d_2)$.

For $-\pi<\arg(x)<0$ the leading term is given by $r=n$:
\begin{align} \label{eq:delta_asym_inf_lower}
\Delta_n(x,\alpha)\sim&{(d_1-\ii d_2)^{n}}{}\left(\frac{1}{2\pi}\right)^{\frac{n}{2}}\ee^{-\frac{\ii\pi n^2}{4}}G(n+1)\ee^{-\frac{ \ii\pi n \alpha}{4}}\ee^{\ii n x}x^{-\frac{n^2}{2}},\quad x\to\infty
\end{align}
We notice that if $d_1+\ii d_2=0$, then \eqref{eq:delta_asym_inf_lower} holds for $-\pi<\arg(x)<\pi$.

For $0<\arg(x)<\pi$ the leading term is given by $r=0$:
\begin{align}  \label{eq:delta_asym_inf_upper}
\Delta_n(x,\alpha)\sim&(d_1+\ii d_2)^{n}{}\left(\frac{1}{2\pi}\right)^{\frac{n}{2}}\ee^{\frac{\ii\pi n^2}{4}} G(n+1)\ee^{\frac{\ii\pi n \alpha}{4}}\ee^{-\ii n x}x^{-\frac{n^2}{2}},\quad x\to\infty
\end{align}
We notice that if $d_1-\ii d_2=0$, then \eqref{eq:delta_asym_inf_upper} holds for $-\pi<\arg(x)<\pi$.

As the result we get Theorem \ref{thm:hankel_bessel_det_asymptotic_at_infinity}. 
\subsection{Asymptotics of special function solutions}\label{sec:solution_asym_at_infty}
In this part, we still need to use Proposition \ref{prop:q_n_formula} and Theorem \ref{thm:hankel_bessel_det_asymptotic_at_infinity} to get the large $x$ asymptotics of $u_n(x,\alpha)$. We start with the case $x>0$. Fortunately, life becomes much easier in this scenario. Notice that when $n$ is even, the Toeplitz determinant doesn't depend on $\alpha$ at all. Hence when shifting the indices, we have for even $n$:
\begin{align*}
u_n(x,\alpha)&=-\frac{\Delta_{n+1}(x,\alpha-2)}{\Delta_{n+1}(x,\alpha)}\sim-\frac{\sin\left(x-\phi+\frac{\pi}{4}(n+1-\alpha)+\frac{\pi}{2}\right)}{\sin\left(x-\phi+\frac{\pi}{4}(n+1-\alpha)\right)}=-\cot\left(x-\phi+\frac{\pi}{4}(n+1-\alpha)\right)
\end{align*}
Similarly, when $n$ is odd and $n+1$ is even, we get:
\begin{align*}
u_n(x,\alpha)&=-\frac{\Delta_{n}(x,\alpha)}{\Delta_{n}(x,\alpha-2)}\sim-\frac{\sin\left(x-\phi+\frac{\pi}{4}(n-\alpha)\right)}{\sin\left(x-\phi+\frac{\pi}{4}(n-\alpha)+\frac{\pi}{2}\right)}=-\tan\left(x-\phi+\frac{\pi}{4}(n-\alpha)\right)
\end{align*}
        Regarding the asymptotics in the complex plane to get the expression for the exponential error in the case $b_1, b_2\neq 0$ it is sufficient to include extra terms from \eqref{eq:delta_asym_inf}. More specifically, we can get the following asymptotics in the upper halfplane using only terms with $r=0$ and $r=1$ of \eqref{eq:delta_asym_inf}
        \begin{align}
        u_n(x,\alpha)-\ii\sim  \left(\frac{d_1-\ii d_2}{d_1+\ii d_2}\right)\frac{2^{2n-1}}{(n-1)!}\ee^{-\frac{\ii\pi}{2}(n+1+\alpha)}x^{n-1}\ee^{2\ii x },\quad x\to \infty,\quad 0<\arg (x)<\pi,
        \end{align}
Similarly, in the lower half plane we use $r=n$ combined with $r=n-1$ of \eqref{eq:delta_asym_inf} and get \begin{align}
        u_n(x,\alpha)+\ii\sim  \left(\frac{d_1+\ii d_2}{d_1-\ii d_2}\right)\frac{2^{2n-1}}{(n-1)!}\ee^{\frac{\ii\pi}{2}(n-1+\alpha)}x^{n-1}\ee^{-2\ii x },\quad x\to \infty,\quad -\pi<\arg (x)<0.
        \end{align}

If $2b_1=d_1-\ii d_2=0$, then the expression \eqref{eq:alt-tau-formula} for $\Delta_n(x,\alpha)$ contains only integrals over $\Gamma_4$. To get the error term of asymptotics of $\Delta_n(x,\alpha)$ we need to look at the steepest descent procedure in more details. We start with the change of variables 
\begin{align}\label{eq:change_of_variable}
    t_l+\ii=-\frac{s_l}{2}\left(\sqrt{4\ii+s_l^2}+s_l\right).
\end{align}
As the result of this transformation we get
\begin{align}
    \ee^{\frac{x}{2}\left(t_l-\frac{1}{t_l}\right)}=\ee^{-\ii x- \frac{x s_l^2}{2} }.
\end{align}
In the computation of asymptotics of $\Delta_n(x,\alpha)$ we will need the expan sion for $t_l$ as $s_l\to0$ 
\begin{align}
    t_l=-\ii\left(1+\ee^{-\frac{\ii\pi}{4}}s_l-\frac{\ii s_l^2}{2}+\frac{\ee^{-\frac{3\pi\ii}{4}}}{8}s_l^3+\mathcal{O}(s_l^4)\right)
\end{align}
Using it and \eqref{eq:gamma1} we see that
\begin{align}
& \prod_{1\leq j<k\leq n}\frac{1}{t_jt_k}\prod_{l=1}^{n}\frac{1}{t_l^{\frac{\alpha}{2}+1}}=\prod_{l=1}^{n}t_l^{-n-\frac{\alpha}{2}}=\ii^{n^2}\ee^{\frac{\ii\pi\alpha n}{4}}\left(1+\ee^{\frac{3\pi\ii}{4}}\left(n+\frac{\alpha}{2}\right)\sum_{l=1}^ns_l-\ii\left(n+\frac{\alpha}{2}\right)^2\sum_{1\leq j<k\leq n}s_js_k\right.\\&\left.-
\frac{\ii}{2}\left(n+\frac{\alpha}{2}\right)^2\sum_{1=1}^ns_l^2 +\mathcal{O}\left(\sum_{l=1}^n|s_l|^3\right)\right)
\end{align}
Similarly
\begin{align}
\prod_{l=1}^ndt_l=\ee^{-\frac{3\pi\ii n}{4}}\left(1+\ee^{-\frac{\ii\pi}{4}}\sum_{l=1}^ns_l-\ii\sum_{1\leq j<k\leq n}s_js_k-\frac{3\ii}{8}\sum_{1=1}^ns_l^2 +\mathcal{O}\left(\sum_{l=1}^n|s_l|^3\right)\right)\prod_{l=1}^nds_l
\end{align}
\begin{align}
    &\prod_{1\leq j<k\leq n}(t_j-t_k)^2=\ee^{-\frac{3\pi\ii}{4}(n^2-n)}\prod_{1\leq j<k\leq n}\left(1+\ee^{-\frac{\ii\pi}{4}}(s_j+s_k)-\frac{\ii}{2}(s_j^2+s_k^2)-\frac{3\ii}{4}s_js_k+\mathcal{O}(|s_j|^3+|s_k|^3)\right)\\&\times\prod_{1\leq j<k\leq n}(s_j-s_k)^2=\ee^{-\frac{3\pi\ii}{4}(n^2-n)}\prod_{1\leq j<k\leq n}(s_j-s_k)^2\left(1+\ee^{-\frac{\ii\pi}{4}}\sum_{1\leq j<k\leq n}(s_j+s_k)-\frac{\ii}{2}\sum_{1\leq j<k\leq n}(s_j^2+s_k^2)\right.\\&\left.-\frac{3\ii}{4}\sum_{1\leq j<k\leq n}s_js_k-\ii\sum_{\substack{1\leq j<k\leq n\\1\leq l<m\leq n
    \\(j,k)\prec (l,m)}}(s_j+s_k)(s_l+s_m)+\mathcal{O}\left(\sum_{l=1}^n|s_l|^3\right)\right)=\ee^{-\frac{3\pi\ii}{4}(n^2-n)}\prod_{1\leq j<k\leq n}(s_j-s_k)^2\\&\times\left(1+\ee^{-\frac{\ii\pi}{4}}(n-1)\sum_{l=1}^ns_l-\frac{\ii}{2}(n-1)^2\sum_{l=1}^ns_l^2-\frac{\ii}{4}(2n-3)(2n-1)\sum_{1\leq j<k\leq n}s_js_k+\mathcal{O}\left(\sum_{l=1}^n|s_l|^3\right)\right)
\end{align}
where symbol $\prec$ means lexicographic ordering. In the computation above we used the following observation:
\begin{align}
\sum_{\substack{1\leq j<k\leq n\\1\leq l<m\leq n
    \\(j,k)\prec (l,m)}}(s_j+s_k)(s_l+s_m)=\frac{(n-1)^2-(n-1)}{2}\sum_{l=1}^n s_l^2+((n-1)^2-1)\sum_{1\leq j<k\leq n}s_js_k
\end{align}
This identity is the result of the following computation. First, we notice that the left hand side is a symmetric polynomial of degree $2$ in variables $s_j$, $j=1\ldots n$. Therefore, to determine the coefficients near the sums $\sum_{l=1}^n s_l^2$
and $\sum_{1\leq j<k\leq n}s_js_k$ it is sufficient to determine the coefficients near $s_1^2$ and $s_1s_n$. 

The term $s_1^2$ can only be obtained for $j=l=1$ in the sum on the left hand side. Condition $(1,k)\prec (1,m)$ implies that there are $\frac{(n-1)^2-(n-1)}{2}$ values for $k$ and $m$. That determines the first coefficient. 

Condition $(j,k)\prec (l,m)$ implies that the term $s_1s_n$ can only be obtained for $j=1$ and $m=n$ on the left hand side. Indices $k$ and $l$ can take any value except for one choice of $k=n$ and $l=1$ simultaneously. Therefore, there are $(n-1)^2-1$ of them, which determines the second coefficient.

We plug these formulas in \eqref{eq:alt-tau-formula}. We notice that some terms evaluate to zero. We end up with the following result
\begin{align}
\Delta_n(x,\alpha)=\frac{b_2^{n}}{\pi^nn!}{\ee^{\frac{\ii\pi n^2}{4}}\ee^{\frac{\ii\pi n\alpha}{4}}\ee^{-\ii n x}}\left(\intop_{\mathbb{R}}\ldots\intop_{\mathbb{R}}{\prod_{1\leq j<k\leq n}{(s_j-s_k)^2}{}}\prod_{l=1}^n{\ee^{-\frac{x}{2}s_l^2}}{}ds_l\right.\\\left.+\frac{\alpha}{2}\ee^{\frac{3\pi\ii}{4}}\intop_{\mathbb{R}}\ldots\intop_{\mathbb{R}}\sum_{l=1}^ns_l{\prod_{1\leq j<k\leq n}{(s_j-s_k)^2}{}}\prod_{l=1}^n{\ee^{-\frac{x}{2}s_l^2}}{}ds_l\right.
\\\left.+\left(\ii n^2-\ii n+\frac{\ii n \alpha}{2}+\ii\right)\intop_{\mathbb{R}}\ldots\intop_{\mathbb{R}}\left(\sum_{l=1}^ns_l\right)^2{\prod_{1\leq j<k\leq n}{(s_j-s_k)^2}{}}\prod_{l=1}^n{\ee^{-\frac{x}{2}s_l^2}}{}ds_l\right.
\\\left.+\left(-\ii n^2+\ii n-\frac{\ii n \alpha}{2}-\frac{\ii\alpha^2}{8}-\frac{7\ii}{8}\right)\intop_{\mathbb{R}}\ldots\intop_{\mathbb{R}}\sum_{l=1}^ns_l^2{\prod_{1\leq j<k\leq n}{(s_j-s_k)^2}{}}\prod_{l=1}^n{\ee^{-\frac{x}{2}s_l^2}}{}ds_l\right.
\\\left.+2\left(-\ii n^2+\ii n-\frac{\ii n \alpha}{2}-\frac{\ii\alpha^2}{8}-\frac{7\ii}{8}\right)\intop_{\mathbb{R}}\ldots\intop_{\mathbb{R}}\sum_{1\leq j<k\leq n}^ns_js_k{\prod_{1\leq j<k\leq n}{(s_j-s_k)^2}{}}\prod_{l=1}^n{\ee^{-\frac{x}{2}s_l^2}}{}ds_l\right)(1+\mathcal{O}(x^{-1}))\\
=\frac{b_2^{n}}{\pi^nn!}{\ee^{\frac{\ii\pi n^2}{4}}\ee^{\frac{\ii\pi n\alpha}{4}}\ee^{-\ii n x}}\left(\intop_{\mathbb{R}}\ldots\intop_{\mathbb{R}}{\prod_{1\leq j<k\leq n}{(s_j-s_k)^2}{}}\prod_{l=1}^n{\ee^{-\frac{x}{2}s_l^2}}{}ds_l\right.
\\\left.+\frac{\ii(1-\alpha^2)}{8}\intop_{\mathbb{R}}\ldots\intop_{\mathbb{R}}\left(\sum_{l=1}^ns_l\right)^2{\prod_{1\leq j<k\leq n}{(s_j-s_k)^2}{}}\prod_{l=1}^n{\ee^{-\frac{x}{2}s_l^2}}{}ds_l\right)(1+\mathcal{O}(x^{-1}))
\end{align}
To evaluate integrals above, we make the change of variables $t_l=\sqrt{2a}y_l-\frac{b}{\sqrt{2a}}$ in the formula \cite[\href{http://dlmf.nist.gov/5.14.E6}{(5.14.6)}]{DLMF} to get 
\begin{align}\label{eq:dlmf_extra_identity}
    \intop_{-\infty}^{\infty}\ldots\intop_{-\infty}^{\infty}\prod_{1\leq j<k\leq n}(y_j-y_k)^2\prod_{l=1}^n\ee^{-ay_l^2+by_l}dy_l=(2\pi)^{\frac{n}{2}}G(n+2)\ee^{\frac{nb^2}{4a}}(2a)^{-\frac{n^2}{2}}.
\end{align}
Taking the second derivative with respect to $b$ and evaluating $b\to 0$, $a\to \frac{x}{2}$ we arrive at
\begin{align}
    \label{eq:delta_asym_b1_zero}\Delta_n(x,\alpha)=\frac{(d_1+\ii d_2)^{n}}{(2\pi)^\frac{n}{2}}{\ee^{\frac{\ii\pi n^2}{4}}\ee^{\frac{\ii\pi n\alpha}{4}}G(n+1)\ee^{-\ii n x}}x^{-\frac{n^2}{2}}\left(1+\ii\frac{(1-\alpha^2)n}{8x}+\mathcal{O}(x^{-2})\right)
\end{align}
In conclusion
 \begin{align}
        u_n(x,\alpha)-\ii\sim \frac{1-\alpha}{2x},\quad x\to \infty,\quad -\pi<\arg (x)<\pi.
        \end{align}
        For $2b_2=d_1+\ii d_2=0$ the computation is similar. The expression \eqref{eq:alt-tau-formula} for $\Delta_n(x,\alpha)$ contains only integrals over $\Gamma_3$. We start with the change of variables 
\begin{align}
    t_l-\ii=-\frac{s_l}{2}\left(\sqrt{-4\ii+s_l^2}+s_l\right).
\end{align}
We notice that it is just a complex conjugation of \eqref{eq:change_of_variable}. As the result all the computations can be complex conjugated and we get 
\begin{align}
&\Delta_n(x,\alpha)=\frac{(d_1-\ii d_2)^{n}}{(2\pi)^\frac{n}{2}}{\ee^{-\frac{\ii\pi n^2}{4}}\ee^{-\frac{\ii\pi n\alpha}{4}}G(n+1)\ee^{\ii n x}}x^{-\frac{n^2}{2}}\left(1-\ii\frac{(1-\alpha^2)n}{8x}+\mathcal{O}(x^{-2})\right)\\
 &u_n(x,\alpha)+\ii\sim \frac{1-\alpha}{2x},\quad x\to \infty,\quad -\pi<\arg (x)<\pi.
\end{align}
That proves Theorem \ref{thm:q_n_asymptotic_at_infinity}. 

\section*{Acknowledgment}
Most of the work was performed during math REU program at University of Michigan in Summer 2023.  H.P. would like to thank his mentor A.P. for his patient and professional instructions and training during the whole summer. Besides some more advanced contents in differential equations and asymptotic analysis, he also learned research skills in modern mathematics with A.P.. There were many insightful and smart ideas throughout the project, which really motivated and inspired him to explore the related topics deeply. He also would like to thank A.P's help on numerical check of the results and polishing the paper during the following whole year. This is also his first experience to tackle such a big problem, so without A.P.'s assistance, there is no way to generate this paper. He is really appreciated very much. H.P. would also thank Professor Lun Zhang from Fudan University and Professor Peter Miller from University of Michigan for useful discussions and suggestions and for providing us with monodromy data for our solution from \cite{BDM}. A.P. was supported NSF MSPRF grant DMS-2103354,  and RSF grant 22-11-00070.
\printbibliography
\appendix
\section{Bessel equation and its solutions}\label{app:bessel}

      Bessel equation is given by
        \begin{equation}\label{eq:bessel}
        u''(x)+\dfrac{u'(x)}{x}+\left(1-\frac{\nu^2}{x^2}\right)u(x)=0.
    \end{equation}
One of the standard solutions in the form of series representation is given by (see \cite[\href{http://dlmf.nist.gov/10.2.E2}{(10.2.2)}]{DLMF})
        \begin{equation}
J_\nu(x)=\frac{x^{\,\nu}}{2^{\,\nu}}\sum_{k=0}^\infty\frac{(-1)^kx^{\,2k}}{2^{\,2k}k!\Gamma(\nu+k+1)}\label{eq:bessel_series},\quad -\pi<\arg (x)<\pi. 
        \end{equation}
where $\Gamma(x)$ is the Gamma function. 

\subsection{Contour integral representations}
The contour integral representation for  $-\frac{\pi}{2}<\arg(x)<\frac{\pi}{2}$ is given by (see \cite[\href{http://dlmf.nist.gov/10.9.E17}{(10.9.17)}]{DLMF})
\begin{align}
      &J_\nu(x) = \frac{1}{2\pi \ii}\intop_{\infty-\ii\pi}^{\infty+\ii\pi}\ee^{x\sinh(z)-\nu z}dz,\label{eq:jnu}\\
\end{align}
The contour of integration in \eqref{eq:jnu} is shown on Figure \ref{fig:contour_jnu}.
\begin{figure}[ht]
	\centering
\includegraphics{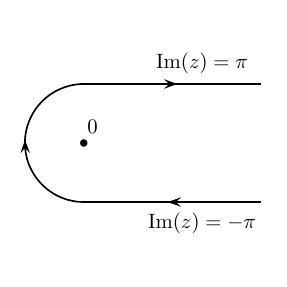}    
        \caption{Contour for $J_\nu(x)$.}\label{fig:contour_jnu}
\end{figure}

\noindent Bessel function of second kind can be written as (see \cite[\href{http://dlmf.nist.gov/10.2.E3}{(10.2.3)} ]{DLMF})
\begin{equation*}
Y_{\nu}(x)=\cot(\pi\nu)J_{\nu}(x)-\csc(\pi\nu)J_{-%
\nu}(x)\end{equation*}
By formula \eqref{eq:jnu} of $J_{\nu}(x)$ it follows that cylinder function $\Cy_{\nu}(x)$ that we introduced in \eqref{def:f_nu} can be written as
 \begin{equation*}
       \Cy_{\nu}(x)=\left(d_1+d_2\cot(\pi\nu)\right)\frac{1}{2\pi \ii}\intop_{\infty-\ii\pi}^{\infty+\ii\pi}\ee^{x\sinh(z)-\nu z}dz-d_2\csc(\pi\nu)\frac{1}{2\pi \ii}\intop_{\infty-\ii\pi}^{\infty+\ii\pi}\ee^{x\sinh(z)+\nu z}dz
    \end{equation*}
Making the change of variable $z\to \ii\pi-z$ in the second integral we get
    \begin{equation*}
       \Cy_{\nu}(x)=\left(d_1+d_2\cot(\pi\nu)\right)\frac{1}{2\pi \ii}\intop_{\infty-\ii\pi}^{\infty+\ii\pi}\ee^{x\sinh(z)-\nu z}dz+d_2\csc(\pi\nu)\frac{\ee^{i \pi\nu}}{2\pi \ii}\intop_{-\infty+2\ii\pi}^{-\infty}\ee^{x\sinh(z)-\nu z}dz
    \end{equation*}
Making another change of variable $\ee^{z}=t$, we get
    \begin{equation}\label{eq:cy_integral_representation}
       \Cy_{\nu}(x)=\intop_{\Gamma_1\cup\Gamma_2}\frac{\ee^{\frac{x}{2}\left(t-\frac{1}{t}\right)}}{2 \pi \ii t^{1+\nu}}(\left(d_1+d_2\cot(\pi\nu)\right)\chi_{\Gamma_1}(t)+d_2\csc(\pi\nu)\ee^{\ii\pi\nu}\chi_{ \Gamma_2}(t))dt=\intop_{\Gamma_1\cup\Gamma_2}h_1(t)dt
    \end{equation}
Here $\Gamma_1$ is contour of integration for $J_\nu(x)$ and $\Gamma_2$ is contour of integration for $J_{-\nu}(x)$ after change of variable $\ee^{z}=t$ shown in the Figure \ref{fig:gamma12Cy}.
\begin{figure*}[ht]
\begin{subfigure}[t]{0.4\textwidth}
\includegraphics{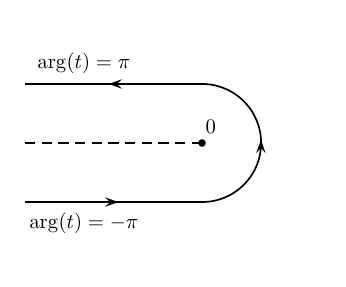}
\caption{Contour $\Gamma_1$}
        \end{subfigure}
        \begin{subfigure}[t]{0.4\textwidth}
\includegraphics{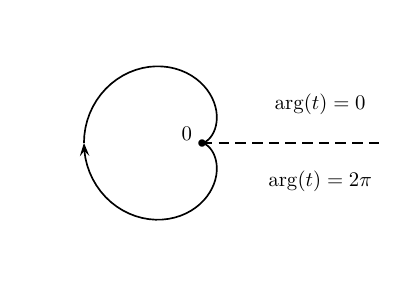}
\caption{Contour $\Gamma_2$}
    \end{subfigure}
    \caption{Contours for $\Cy_\nu(x)$.}
        \label{fig:gamma12Cy}
        \end{figure*}
We should make a remark about power function $\frac{1}{t^{1+\nu}}$. When we use it, we assume $-\pi<\arg(t)<\pi$ on contour $\Gamma_1$ and $0<\arg(t)<2\pi $ on contour $\Gamma_2$.
\begin{remark}\label{rem:integer_alpha}
    For the case $\alpha\in 2\mathbb{Z}$ we could use the formula $Y_n(x)=\dfrac{1}{\pi}\left.\dfrac{\partial J_{\nu}}{\partial\nu}\right|_{\nu=n}+\dfrac{(-1)^n}{\pi}\left.\dfrac{\partial J_{\nu}}{\partial\nu}\right|_{\nu=-n}.$ It would mean that we need to consider orthogonal polynomials with weight $\ee^{-t} t^{a}\ln(t)$. The explicit formula for the corresponding multiple integral can be obtained by using derivatives with respect to parameters, as in \eqref{eq:dlmf_extra_identity}. We use a different approach in Appendix \ref{app:int_alpha_nonzero_d2} by taking limit $\im(\alpha)\to 0$.
\end{remark}
Alternatively, cylinder function $\Cy_\nu(x)$ can be written in terms Hankel functions, which defined as (see \cite[\href{http://dlmf.nist.gov/10.4.E3}{(10.4.3)}]{DLMF})
\begin{align}\label{eq:bessel_to_hankel}
H_\nu^{(1)}(x)=J_\nu(x)+\ii Y_\nu(x),&&
H_\nu^{(2)}(x)=J_\nu(x)-\ii Y_\nu(x).
\end{align}
They admit convenient asymptotic series expansions at infinity. 
\begin{align}\label{eq:hankel1_asym}
    H_\nu^{(1)}(x)\sim \sqrt{\frac{2}{\pi x}} \ee^{\ii x-\frac{\ii\pi\nu}{2}-\frac{\ii\pi}{4}}\sum_{k=0}^\infty \frac{\Gamma(\frac{1}{2}-\nu+k)\Gamma(\frac{1}{2}+\nu+k)}{\Gamma(\frac{1}{2}-\nu)\Gamma(\frac{1}{2}+\nu)(2\ii x)^k k!},\quad x\to\infty,\quad -\pi<\arg(x)<\pi, \\\label{eq:hankel2_asym}
    H_\nu^{(2)}(x)\sim \sqrt{\frac{2}{\pi x}} \ee^{-\ii x+\frac{\ii\pi\nu}{2}+\frac{\ii\pi}{4}}\sum_{k=0}^\infty \frac{\Gamma(\frac{1}{2}-\nu+k)\Gamma(\frac{1}{2}+\nu+k)}{\Gamma(\frac{1}{2}-\nu)\Gamma(\frac{1}{2}+\nu)(-2\ii x)^k k!},\quad x\to\infty,\quad -\pi<\arg(x)<\pi.
\end{align}
The contour integral representations for  $-\frac{\pi}{2}<\arg(x)<\frac{\pi}{2}$ are given by (see \cite[\href{http://dlmf.nist.gov/10.9.E18}{(10.9.18)}]{DLMF})
\begin{align}
      &H^{(1)}_\nu(x) = \frac{1}{\pi \ii}\intop_{-\infty}^{\infty+\ii\pi}\ee^{x\sinh(z)-\nu z}dz,\label{eq:hnu}
       &H^{(2)}_\nu(x) = -\frac{1}{\pi \ii}\intop_{-\infty}^{\infty-\ii\pi}\ee^{x\sinh(z)-\nu z}dz.
\end{align}
The contours of integration in \eqref{eq:hnu} are shown on Figure \ref{fig:contour_hnu}.

\begin{figure*}[ht]
\begin{subfigure}[t]{0.4\textwidth}
\includegraphics{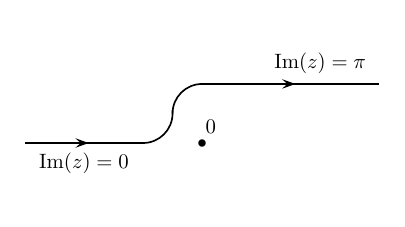}
\caption{Contour for $H_\nu^{(1)}$}
        \end{subfigure}\hspace{0.5cm}
        \begin{subfigure}[t]{0.4\textwidth}
\includegraphics{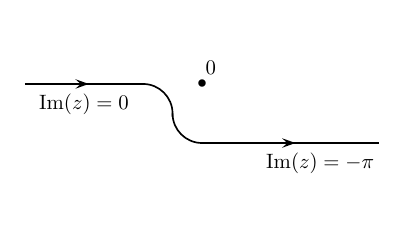}
\caption{Contours for $H_\nu^{(2)}$}
    \end{subfigure}
    \caption{Contours of integration for Hankel functions.}
        \label{fig:contour_hnu}
        \end{figure*}
We would like to also get the alternative integral representation for $\Cy_\nu(x)$. Using \eqref{eq:bessel_to_hankel} we have
\begin{equation}\label{def:alt_f_nu_hankel}
         \Cy_{\nu}(x)=b_1H^{(1)}_\nu(x)+b_2H^{(2)}_{\nu}(x)
     \end{equation}
where
\begin{equation}\label{def:b1b2}
    b_1=\frac{d_1-\ii d_2}{2},\quad b_2=\frac{d_1+\ii d_2}{2}.
\end{equation}
Making a change of variable $\ee^{z}=t$, we get
    \begin{equation}\label{eq:alt_cy_integral_representation}
       \Cy_{\nu}(x)=\intop_{\Gamma_3\cup\Gamma_4}\frac{\ee^{\frac{x}{2}\left(t-\frac{1}{t}\right)}}{2 \pi \ii t^{1+\nu}}((d_1-\ii d_2)\chi_{\Gamma_3}(t)-(d_1+\ii d_2)\chi_{ \Gamma_4}(t))dt=\intop_{\Gamma_3\cup\Gamma_4}h_2(t)dt,
    \end{equation}
    where contours $\Gamma_3$ and $\Gamma_4$ are shown on the Figure \ref{fig:contour_gamma34}.
    \begin{figure*}[ht]
\begin{subfigure}[t]{0.4\textwidth}
\includegraphics{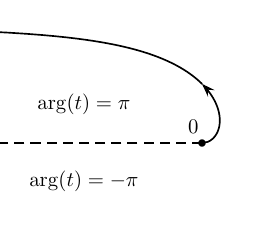}
\caption{Contour $\Gamma_3$}
        \end{subfigure}
        \begin{subfigure}[t]{0.4\textwidth}
\includegraphics{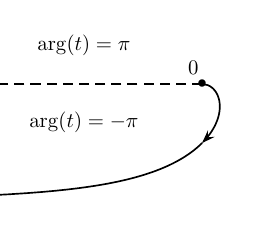}
\caption{Contour $\Gamma_4$}
    \end{subfigure}
    \caption{Alternative contours of integration for $\Cy_\nu(x)$.}
        \label{fig:contour_gamma34}
        \end{figure*}

`\subsection{Differential identities for cylinder functions}\label{sec:bessel}
     Cylinder functions $\Cy_{\nu}(x)$ satisfy the following differential identities (see \cite[\href{http://dlmf.nist.gov/10.6.E2}{(10.6.2)}]{DLMF}):
  \begin{equation}\label{eq:diff-iden1}
        \Cy_\nu'(x)=\frac{\nu}{x}\Cy_\nu(x)-\Cy_{\nu+1}(x),
    \end{equation}
    \begin{equation}\label{eq:diff-iden2}
        \Cy_\nu'(x)=\Cy_{\nu-1}(x)-\frac{\nu}{x}\Cy_\nu(x).
    \end{equation}
    
 \subsection{Analytic continuation}
\label{sec:analytic_continuation}
Cylinder functions satisfy analytic continuation formulas \cite[\href{http://dlmf.nist.gov/10.11}{\S 10.11}]{DLMF}. We rewrite them in a more convenient form.
\begin{align}
   & J_{\nu}(x)=\ee^{-\ii\pi\nu}J_{\nu}(\ee^{\ii\pi}x),\quad Y_{\nu}(x)=\ee^{\ii\pi\nu}Y_{\nu}(\ee^{\ii\pi}x)-2\ii\cos(\pi\nu)J_\nu(\ee^{\ii\pi}x),\label{eq:j_plus}\\
    & J_{\nu}(x)=\ee^{\ii\pi\nu}J_{\nu}(\ee^{-\ii\pi}x),\quad Y_{\nu}(x)=\ee^{-\ii\pi\nu}Y_{\nu}(\ee^{-\ii\pi}x)+2\ii\cos(\pi\nu)J_\nu(\ee^{-\ii\pi}x),\label{eq:j_minus}\\
     & H_{\nu}^{(1)}(x)=\csc(\pi\nu)\sin(2\pi\nu)H_{\nu}^{(1)}(\ee^{\ii\pi}x)+\ee^{-\ii\pi\nu}  H_{\nu}^{(2)}(\ee^{\ii\pi}x),\quad H_{\nu}^{(2)}(x)=-\ee^{\ii\pi\nu}  H_{\nu}^{(1)}(\ee^{\ii\pi}x),\label{eq:h_plus}\\
    &H_{\nu}^{(1)}(x)=-\ee^{-\ii\pi\nu}  H_{\nu}^{(2)}(\ee^{-\ii\pi}x),\quad H_{\nu}^{(2)}(x)=\csc(\pi\nu)\sin(2\pi\nu)H_{\nu}^{(2)}(\ee^{-\ii\pi}x)+\ee^{\ii\pi\nu}  H_{\nu}^{(1)}(\ee^{-\ii\pi}x).\label{eq:h_minus}\\
\end{align}
It implies that
\begin{align}
\mathcal{C}_\nu(x)&=d_1^+ J_{\nu}(\ee^{\ii\pi}x)+d_2^+ Y_{\nu}(\ee^{\ii\pi}x)\label{eq:C_J_+}\\&=d_1^- J_{\nu}(\ee^{-\ii\pi}x)+d_2^- Y_{\nu}(\ee^{-\ii\pi}x)\label{eq:C_J_-}\\&=b_1^+ H_{\nu}^{(1)}(\ee^{\ii\pi}x)+b_2^+ H_{\nu}^{(2)}(\ee^{\ii\pi}x)\label{eq:C_H_+}\\&=b_1^- H_{\nu}^{(1)}(\ee^{-\ii\pi}x)+b_2^- H_{\nu}^{(2)}(\ee^{-\ii\pi}x)\label{eq:C_H_-}.
\end{align}
where
\begin{align}\label{eq:d+}
&d_1^{+}=\ee^{-\ii\pi\nu}d_1- 2\ii\cos(\pi\nu)d_2,\quad d_2^+=\ee^{\ii\pi\nu}d_2,\\\label{eq:d-}
&d_1^{-}=\ee^{\ii\pi\nu}d_1+ 2\ii\cos(\pi\nu)d_2,\quad d_2^-=\ee^{-\ii\pi\nu}d_2,\\
&\label{eq:b_+}b_1^{+}=\csc(\pi\nu)\sin(2\pi\nu)b_1-\ee^{\ii\pi\nu}b_2,\quad b_2^+=\ee^{-\ii\pi\nu}b_1,\\ 
&\label{eq:b_-}b_1^{-}=\ee^{\ii\pi\nu}b_2,\quad b_2^-=\csc(\pi\nu)\sin(2\pi\nu)b_2-\ee^{-\ii\pi\nu}b_1.
\end{align}

\section{Alternative proofs of main theorems using analytic continuation formulas}
As the general idea stated in Section \ref{sec: proof_thm_1.1} and \ref{sec:toeplitz_asym_at_infty}, we can first notice that our method of multiple contour integral representation actually works for $-\frac{\pi}{2}<\arg(x)<\frac{\pi}{2}$. On the next step we apply the analytic continuation formulas from Appendix \ref{sec:analytic_continuation} to the contour integral representations of cylinder functions and extend our results from $-\frac{\pi}{2}<\arg(x)<\frac{\pi}{2}$ to $-\pi<\arg{x}<\pi$. Since the detailed proof along this path involves fairly prolix computations, we presented it in this Appendix as a reference.
\subsection{Alternative proof of Theorem \ref{thm:hankel_bessel_det_asymptotic}}
\label{app:alternative_proof_thm_1.1}
In the first step, we need to extend the asymptotic formula from $x>0$ to $-\frac{\pi}{2}<\arg(x)<\frac{\pi}{2}$. We can notice that multiple contour integral representation \eqref{eq:tau-formula} holds for $-\frac{\pi}{2}<\arg(x)<\frac{\pi}{2}$. Looking in more detail in the proof of Theorem \ref{thm:hankel_bessel_det_asymptotic}, we can notice that the resulting contours after the changes of variables $t_l=\frac{2}{x}s_l$ or $t_l=\frac{x}{2}s_l$  can be deformed back to $\Gamma_1$ and $\Gamma_2$. This is possible due to the exponential decay of the integrand in the halfplane $\re(s)<0$ for the contour $\Gamma_1$ and in the halfplane $\re(s)>0$ for the contour $\Gamma_2$. We perform the deformation in several steps: first, we deform one side of the contour, then move the branch cut, then move the second part of the contour. 

Now we are ready to extend our result to $-\pi<\arg(x)<\pi$. In the first step, we assume $-\pi<\arg(x)<-\frac{\pi}{2}$, which implies $0<\arg(\ee^{\ii\pi}x)<\frac{\pi}{2}$. To obtain the asymptotic formula of the Toeplitz determinant $\Delta_{n}(x,\alpha)$ at zero, we can apply the analytic continuation formulas \eqref{eq:j_plus}. Using them, we express the cylinder function $\mathcal{C}_{\nu}(x)$ as \eqref{eq:C_J_+} with $d_1^+$ and $d_2^+$ given by \eqref{eq:d+}.

Since $0<\arg(\ee^{\ii\pi}x)<\frac{\pi}{2}$ we can substitute $d_{1}^{+}$, $d_{2}^{+}$ and $\ee^{\ii\pi}x$ with $\nu=\frac{\alpha}{2}$ into the asymptotic formulas derived earlier.
If $d_2\neq 0$ and $\re(\alpha)>2n-2 \text{ or }d_1\sin\left(\frac{\pi\alpha}{2}\right)+d_2\cos\left(\frac{\pi\alpha}{2}\right)=0,\text{ as }x\to 0,-\pi<\arg(x)<-\frac{\pi}{2}$, we will get:
\begin{align}
\Delta_n(x,\alpha)&\sim (-1)^{\frac{n(n+1)}{2}}\left(\frac{d_2^{+}}{\pi}\right)^{n}\frac{G(n+1)G(\frac{\alpha}{2}+1)}{G(\frac{\alpha}{2}-n+1)}\left(\frac{\ee^{\ii\pi}x}{2}\right)^{-\frac{n\alpha}{2}}\\&\sim(-1)^{\frac{n(n+1)}{2}}\ee^{\frac{\ii\pi n \alpha}{2}}\left(\frac{d_2}{\pi}\right)^{n}\frac{G(n+1)G(\frac{\alpha}{2}+1)}{G(\frac{\alpha}{2}-n+1)}\ee^{-\frac{\ii\pi n \alpha}{2}}\left(\frac{x}{2}\right)^{-\frac{n\alpha}{2}}\\&\sim (-1)^{\frac{n(n+1)}{2}}\left(\frac{d_2}{\pi}\right)^{n}\frac{G(n+1)G(\frac{\alpha}{2}+1)}{G(\frac{\alpha}{2}-n+1)}\left(\frac{x}{2}\right)^{-\frac{n\alpha}{2}}
\end{align}
If $d_2\neq 0$, $d_1\sin\left(\frac{\pi\alpha}{2}\right)+d_2\cos\left(\frac{\pi\alpha}{2}\right)\neq 0$,  and $2n-4j<\re(\alpha)<2n-4j+2 \text{ for some }j=0, 1,\ldots,n,\text{ as }x\to 0,-\pi<\arg(x)<-\frac{\pi}{2}$, we will get:
\begin{align*}&\Delta_n(x,\alpha)\sim (-1)^{\frac{n(n-1)}{2}+nj+n-j}\left( \frac{d_2^{+}}{\pi}\right)^n\left(\frac{d_1^{+}}{d_2^{+}}\sin\left(\frac{\pi\alpha}{2}\right)+\cos\left(\frac{\pi\alpha}{2}\right)\right)^j\\&\times\frac{G(n-j+1)G(-\frac{\alpha}{2}+n-j+1)G(j+1)G(j+\frac{\alpha}{2}+1)}{G(-\frac{\alpha}{2}+n-2j+1)G(\frac{\alpha}{2}-n+2j+1)}\left(\frac{\ee^{\ii\pi}x}{2}\right)^{(\alpha-2n+2j)j-\frac{n\alpha}{2}}\\ \\&\sim(-1)^{\frac{n(n-1)}{2}+nj+n-j}\ee^{\frac{\ii\pi n \alpha}{2}}\left( \frac{d_2}{\pi}\right)^n\left(\ee^{-\ii\pi\alpha}\frac{d_1}{d_2}\sin\left(\frac{\pi\alpha}{2}\right)-2\ii\sin\left(\frac{\pi\alpha}{2}\right)\ee^{-\frac{\ii\pi \alpha}{2}}\cos\left(\frac{\pi\alpha}{2}\right)+\cos\left(\frac{\pi\alpha}{2}\right)\right)^j\\&\times\frac{G(n-j+1)G(-\frac{\alpha}{2}+n-j+1)G(j+1)G(j+\frac{\alpha}{2}+1)}{G(-\frac{\alpha}{2}+n-2j+1)G(\frac{\alpha}{2}-n+2j+1)}\left(\frac{x}{2}\right)^{(\alpha-2n+2j)j-\frac{n\alpha}{2}}\ee^{\ii\pi((\alpha-2n+2j)j-\frac{n\alpha}{2})}\\ \\&\sim(-1)^{\frac{n(n-1)}{2}+nj+n-j}\left( \frac{d_2}{\pi}\right)^n\left(\frac{d_1}{d_2}\sin\left(\frac{\pi\alpha}{2}\right)+\cos\left(\frac{\pi\alpha}{2}\right)\right)^j\ee^{\frac{\ii\pi n \alpha}{2}-\ii\pi\alpha j+\ii\pi((\alpha-2n+2j)j-\frac{n\alpha}{2})}\\&\times\frac{G(n-j+1)G(-\frac{\alpha}{2}+n-j+1)G(j+1)G(j+\frac{\alpha}{2}+1)}{G(-\frac{\alpha}{2}+n-2j+1)G(\frac{\alpha}{2}-n+2j+1)}\left(\frac{x}{2}\right)^{(\alpha-2n+2j)j-\frac{n\alpha}{2}}\\ \\&\sim(-1)^{\frac{n(n-1)}{2}+nj+n-j}\left( \frac{d_2}{\pi}\right)^n\left(\frac{d_1}{d_2}\sin\left(\frac{\pi\alpha}{2}\right)+\cos\left(\frac{\pi\alpha}{2}\right)\right)^j\\&\times\frac{G(n-j+1)G(-\frac{\alpha}{2}+n-j+1)G(j+1)G(j+\frac{\alpha}{2}+1)}{G(-\frac{\alpha}{2}+n-2j+1)G(\frac{\alpha}{2}-n+2j+1)}\left(\frac{x}{2}\right)^{(\alpha-2n+2j)j-\frac{n\alpha}{2}}
\end{align*}
If $d_1\sin\left(\frac{\pi\alpha}{2}\right)+d_2\cos\left(\frac{\pi\alpha}{2}\right)\neq 0$,  and $\re(\alpha)<-2n \text{ or }d_2=0,\text{ as }x\to 0,-\pi<\arg(x)<-\frac{\pi}{2}$, we will get:
\begin{align*}
&\Delta_n(x,\alpha)\sim \frac{(-1)^{\frac{n(n+1)}{2}}}{\pi^n}\left(d_1^{+}\sin\left(\frac{\pi\alpha}{2}\right)+d_2^{+}\cos\left(\frac{\pi\alpha}{2}\right)\right)^{n}\frac{G(n+1)G(-\frac{\alpha}{2}+1)}{G(-\frac{\alpha}{2}-n+1)}\left(\frac{\ee^{\ii\pi}x}{2}\right)^{\frac{n\alpha}{2}}\\&\sim \frac{(-1)^{\frac{n(n+1)}{2}}}{\pi^n}\left(\ee^{-\frac{\ii\pi\alpha}{2}}d_1\sin\left(\frac{\pi\alpha}{2}\right)-2\ii\sin\left(\frac{\pi\alpha}{2}\right)d_2\cos\left(\frac{\pi\alpha}{2}\right)+\ee^{\frac{\ii\pi\alpha}{2}}d_2\cos\left(\frac{\pi\alpha}{2}\right)\right)^{n}\\&\times\frac{G(n+1)G(-\frac{\alpha}{2}+1)}{G(-\frac{\alpha}{2}-n+1)}\left(\frac{x}{2}\right)^{\frac{n\alpha}{2}}\ee^{\frac{\ii\pi n \alpha}{2}}\\&\sim \frac{(-1)^{\frac{n(n+1)}{2}}}{\pi^n}\ee^{-\frac{\ii\pi n\alpha}{2}}\left(d_1\sin\left(\frac{\pi\alpha}{2}\right)+d_2\cos\left(\frac{\pi\alpha}{2}\right)\right)^{n}\frac{G(n+1)G(-\frac{\alpha}{2}+1)}{G(-\frac{\alpha}{2}-n+1)}\left(\frac{x}{2}\right)^{\frac{n\alpha}{2}}\ee^{\frac{\ii\pi n \alpha}{2}}\\&\sim\frac{(-1)^{\frac{n(n+1)}{2}}}{\pi^n}\left(d_1\sin\left(\frac{\pi\alpha}{2}\right)+d_2\cos\left(\frac{\pi\alpha}{2}\right)\right)^{n}\frac{G(n+1)G(-\frac{\alpha}{2}+1)}{G(-\frac{\alpha}{2}-n+1)}\left(\frac{x}{2}\right)^{\frac{n\alpha}{2}}.
\end{align*}
Therefore, the piecewise formula for sector $-\pi<\arg(x)<-\frac{\pi}{2}$ is consistent with sector $-\frac{\pi}{2}<\arg(x)<0$ and we can certainly extend the validity of our result to sector $-\pi<\arg(x)<0$. 

Similarly, for $\frac{\pi}{2}<\arg(x)<\pi$, we get $-\frac{\pi}{2}<\arg(\ee^{-\ii\pi}x)<0$. We can apply the analytic continuation formulas \eqref{eq:j_minus}.
Consequently, the cylinder function $\mathcal{C}_{\nu}(x)$ can be expressed as \eqref{eq:C_J_-} with $d_1^-$, $d_2^-$ given by \eqref{eq:d-}.
Since  $-\frac{\pi}{2}<\arg(\ee^{-\ii\pi}x)<0$ we can substitute $d_{1}^{-}$,$d_{2}^{-}$ and $e^{-\ii\pi}x$ into the asymptotic formulas obtained earlier. After almost the same simplifications, we will arrive at the conclusion that the piecewise formula on the sector $\frac{\pi}{2}<\arg(x)<\pi$ is consistent with the sector $0<\arg(x)<\frac{\pi}{2}$ and we can also extend the validity of our result to the sector $0<\arg(x)<\pi$. It follows that our result holds for the entire sector $-\pi<\arg(x)<\pi$. That proves Theorem \ref{thm:hankel_bessel_det_asymptotic}.
\subsection{Alternative proof of parts (\ref{thm: toeplitz_infinity_part_3})-(\ref{thm: toeplitz_infinity_part_6}) of Theorem \ref{thm:hankel_bessel_det_asymptotic_at_infinity}}\label{app:alternative_proof_thm_1.3}
Again, we start with extending the asymptotics from $x>0$ to $-\frac{\pi}{2}<\arg(x)<\frac{\pi}{2}$. The multiple contour integral representation \eqref{eq:alt-tau-formula} holds for $-\frac{\pi}{2}<\arg(x)<\frac{\pi}{2}$. We proceed with a steepest descent analysis of \eqref{eq:delta_n_expansion}. The steepest descent contours are given by
$\im(\ee^{\ii\arg(x)}\Xi(t))=\im(\ee^{\ii\arg(x)}\Xi(\pm\ii))$. Since $-\frac{\pi}{2}<\arg(x)<\frac{\pi}{2}$ the integration over these new contours keeps the integral finite. The main contribution to the asymptotics is provided by critical points. The local changes of the variables are given by $(t_l-\ii)=s_l\ee^{-\frac{\ii\arg(x)}{2}}\ee^{\frac{3\pi \ii}{4}}$ for $l\in {I}$ and $(t_l+\ii)=s_l\ee^{-\frac{\ii\arg(x)}{2}}\ee^{-\frac{3\pi \ii}{4}}$ for $l\in {I^{c}}$.  As the result we get 
\begin{align}
&\Delta_n(x,\alpha)\sim{}{}\sum_{r=0}^n\frac{b_1^{r}b_2^{n-r}}{r!(n-r)!}\frac{4^{r(n-r)}}{\pi^n}(-1)^{\frac{(n-r)(n+r+1)+n(n-1)}{2}}\ii^{\frac{r(r-1)+(n-r)(n-r-1)-2n}{2}}\ee^{\frac{\pi \ii}{4}(2r-n)(1-\alpha)}\ee^{\ii x(2r-n)}\\&\times \ee^{\ii \arg(x)(-r^2+nr-\frac{n^2}{2})}\intop_{\mathbb{R}}\ldots\intop_{\mathbb{R}}{\prod_{1\leq j<k\leq r}{(s_j-s_k)^2}{}}\prod_{l=1}^r{\ee^{-\frac{|x|}{2}s_l^2}}{}ds_l\intop_{\mathbb{R}}\ldots\intop_{\mathbb{R}}{\prod_{1\leq j<k\leq n-r}{(s_j-s_k)^2}{}}{}\prod_{l=1}^{n-r}{\ee^{-\frac{|x|}{2}s_l^2}}{}ds_l
,\,\\& x\to\infty,\, -\frac{\pi}{2}<\arg(x)<\frac{\pi}{2}.
\end{align}
After the change of variable $s\to \frac{s}{\sqrt{|x|}}$ we recover \eqref{eq:delta_asym_inf} in the halfplane $-\frac{\pi}{2}<\arg(x)<\frac{\pi}{2}$, which implies the validity of parts (\ref{thm: toeplitz_infinity_part_3})-(\ref{thm: toeplitz_infinity_part_6}) of Theorem \ref{thm:hankel_bessel_det_asymptotic_at_infinity} in the same domain.

Now we are ready to the second step of our program. We take $-\pi<\arg(x)<-\frac{\pi}{2}$ which implies $0<\arg(\ee^{\ii\pi}x)<\frac{\pi}{2}$. To get the formula of the Toeplitz determinant $\Delta_{n}(x,\alpha)$ at infinity, we can apply the analytic continuation formulas \eqref{eq:h_plus}. 
Correspondingly, the cylinder function $\mathcal{C}_{\nu}(x)$ can be expressed as \eqref{eq:C_H_+} with $b_1^{+}$ and $b_2^{+}$ given by \eqref{eq:b_+}.
We substitute $b_1^{+}$, $b_2^{+}$ and $\ee^{\ii\pi}x$ into (\ref{eq:delta_asym_inf}), then the asymptotic formula of the Teoplitz determinant on the sector $-\pi<\arg(x)<-\frac{\pi}{2}$ is obtained as follows:
\begin{align}
\Delta_n(x,\alpha)\sim&\sum_{r=0}^{n}{(b_1^{+})^{r}(b_2^{+})^{n-r}}\left(\frac{2}{\pi}\right)^{\frac{n}{2}}4^{r(n-r)}(-1)^{\frac{(n-r)(n+r+1)+n(n-1)}{2}}\ii^{\frac{r(r-1)+(n-r)(n-r-1)-2n}{2}}\\&\times G(r+1)G(n-r+1)\ee^{\frac{\pi \ii}{4}(2r-n)(1-\alpha)}\ee^{-\ii(2r-n)x}(\ee^{\ii\pi}x)^{-r^2+nr-\frac{n^2}{2}},\quad x\to\infty.
\end{align}
The leading term is exponentially growing and is given by $r=0$:
\begin{align*}
    \Delta_n(x,\alpha)&\sim (b_2^{+})^n\left(\frac{2}{\pi}\right)^{\frac{n}{2}}(-1)^{\frac{n(n+1)+n(n-1)}{2}}\ii^{\frac{n^2-3n}{2}} G(n+1)\ee^{-\frac{\pi \ii}{4}n(1-\alpha)}\ee^{\ii n x}(\ee^{\ii\pi}x)^{-\frac{n^2}{2}},\quad x\to\infty.\\&\sim (b_2)^n\ee^{-\frac{\ii n\pi\alpha}{2}}\left(\frac{2}{\pi}\right)^{\frac{n}{2}}(-1)^{\frac{n(n-1)}{2}}\ee^{\frac{\ii\pi n(n+1)}{2}}\ii^{\frac{n^2-3n}{2}} G(n+1)\ee^{-\frac{\pi \ii}{4}n(1-\alpha)}\ee^{\ii n x}x^{-\frac{n^2}{2}}\ee^{-\frac{\ii\pi n^2}{2}},\quad x\to\infty.\\&\sim (b_2)^n\left(\frac{2}{\pi}\right)^{\frac{n}{2}}(-1)^{\frac{n(n-1)}{2}}\ii^{\frac{n^2-3n}{2}} G(n+1)\ee^{\ii n x}x^{-\frac{n^2}{2}}\ee^{\frac{\pi \ii}{4}n(1-\alpha)},\quad x\to\infty.\\&\sim{(d_1-\ii d_2)^{n}}{}\left(\frac{1}{2\pi}\right)^{\frac{n}{2}}(-1)^{\frac{n(n-1)}{2}}\ii^{\frac{n^2-3n}{2}}G(n+1)\ee^{\frac{\pi \ii}{4}n(1-\alpha)}\ee^{\ii n x}x^{-\frac{n^2}{2}},\quad x\to\infty
\end{align*}
Therefore, the formula on the sector $-\pi<\arg(x)<-\frac{\pi}{2}$ is consistent with the sector $-\frac{\pi}{2}<\arg(x)<0$ and it implies that the validity of our result can be extended to the sector $-\pi<\arg(x)<0$. That proves formula (\ref{eq:delta_asym_inf_lower}). 

On the other hand, let $\frac{\pi}{2}<\arg(x)<\pi$, then $-\frac{\pi}{2}<\arg(\ee^{-\ii\pi}x)<0$. We can apply the analytic continuation formulas \eqref{eq:h_minus}. Consequently, the cylinder function $\mathcal{C}_{\nu}(x)$ can be expressed as \eqref{eq:C_H_-} with $b_1^{-}$ and $b_2^{-}$ given by \eqref{eq:b_-}. We substitute $b_1^{-}$, $b_2^{-}$ and $\ee^{-\ii\pi}x$ into (\ref{eq:delta_asym_inf}), then the asymptotic formula of the Teoplitz determinant on the sector $\frac{\pi}{2}<\arg(x)<\pi$ is obtained as follows:
\begin{align}
\Delta_n(x,\alpha)\sim&\sum_{r=0}^{n}{(b_1^{-})^{r}(b_2^{-})^{n-r}}\left(\frac{2}{\pi}\right)^{\frac{n}{2}}4^{r(n-r)}(-1)^{\frac{(n-r)(n+r+1)+n(n-1)}{2}}\ii^{\frac{r(r-1)+(n-r)(n-r-1)-2n}{2}}\\&\times G(r+1)G(n-r+1)\ee^{-\frac{\pi \ii}{4}(2r-n)(1-\alpha)}\ee^{-\ii(2r-n)x}(\ee^{-\ii\pi}x)^{-r^2+nr-\frac{n^2}{2}},\quad x\to\infty.
\end{align}
The leading term is exponentially growing and is given by $r=n$:
\begin{align*}
    \Delta_n(x,\alpha)&\sim (b_1^{-})^n\left(\frac{2}{\pi}\right)^{\frac{n}{2}}(-1)^{\frac{n(n-1)}{2}}\ii^{\frac{n^2-3n}{2}} G(n+1)\ee^{-\frac{\pi \ii}{4}n(1-\alpha)}\ee^{-\ii n x}(\ee^{\ii\pi}x)^{-\frac{n^2}{2}},\quad x\to\infty.\\&\sim (b_1)^n\ee^{\frac{\ii n\pi\alpha}{2}}\left(\frac{2}{\pi}\right)^{\frac{n}{2}}\ee^{\frac{\ii\pi n(n-1)}{2}}\ii^{\frac{n^2-3n}{2}} G(n+1)\ee^{-\frac{\pi \ii}{4}n(1-\alpha)}\ee^{-\ii n x}x^{-\frac{n^2}{2}}\ee^{\frac{\ii\pi n^2}{2}},\quad x\to\infty.\\&\sim (b_1)^n\left(\frac{2}{\pi}\right)^{\frac{n}{2}}(-1)^{n^2}\ii^{\frac{n^2-3n}{2}} G(n+1)\ee^{\ii n x}x^{-\frac{n^2}{2}}\ee^{-\frac{\pi \ii}{4}n(1-\alpha)},\quad x\to\infty.\\&\sim(d_1+\ii d_2)^{n}{}\left(\frac{1}{2\pi}\right)^{\frac{n}{2}}(-1)^{n^2}\ii^{\frac{n^2-3n}{2}} G(n+1)\ee^{-\frac{\pi \ii}{4}n(1-\alpha)}\ee^{-\ii n x}x^{-\frac{n^2}{2}},\quad x\to\infty.
\end{align*}
Therefore, the formula for sector $\frac{\pi}{2}<\arg(x)<\pi$ is consistent with sector $0<\arg(x)<\frac{\pi}{2}$ and again implies that the validity of our result can be extended to sector $0<\arg (x)<\pi$. That proves formula (\ref{eq:delta_asym_inf_upper}). 
\section{Computation of monodromy}\label{app:rhp representation}
For the reader's convenience, we provide the derivation of the monodromy data formulas \eqref{eq:monodromy_data}-\eqref{eq:monodromy_data2}. We start with the formulation of the corresponding Riemann-Hilbert problem.
\begin{rhp} \label{rhp:initial}
Fix the parameters $b_1, b_2,\alpha\in\C $ and $\re(x)>0$. We seek a $2 \times 2$ matrix function $\lambda\mapsto\mathbf \Psi(\lambda, x)$ satisfying:
\begin{enumerate}
\item[]\textbf{Analyticity:} $\mathbf \Psi (\lambda, x)$ is analytic in $\C \setminus \Gamma$, where $L = \{ | \lambda| = 1\} \cup \ii \R_{-}$ is the jump contour shown in Figure \ref{fig:1}. 
\item[] \textbf{Jump condition:} $\mathbf \Psi (\lambda, x)$ has continuous boundary values on $L \setminus \{0\}$ from each component of $\mathbb{C}\setminus L$, which satisfy
$\mathbf \Psi_+ (\lambda, x) = \mathbf \Psi_- (\lambda, x) \mathbf J(\lambda)$, where $\mathbf J(\lambda)$ is as shown in Figure \ref{fig:1} and where the subscript $+$ (resp., $-$) denotes a boundary value taken from the left (resp., right) of an arc of $L$. The parameters on Figure \ref{fig:1} have values
\begin{align}
&\mathbf{C}_{0\infty}=\begin{pmatrix}
        1&0\\2b_1&1
    \end{pmatrix},\quad e_\infty^{-2}=e_0^2=(-1)^n\ee^{\frac{\ii\pi\alpha}{2}},\\
    & \mathbf{S}_2^{\infty}=\begin{pmatrix}
        1&0\\2(b_1-b_2\ee^{\ii\pi\alpha})&1
    \end{pmatrix},\quad \mathbf{S}_2^{0}=\begin{pmatrix}
        1&0\\2\ee^{\ii\pi\alpha}(b_1-b_2)&1
    \end{pmatrix}.
\end{align}
\item[] \textbf{Normalization:} $\mathbf \Psi(\lambda,x)$ satisfies the asymptotic conditions
\begin{equation}
\label{eq:Psi-asymptotic-infinity_new}
\boldsymbol \Psi(\lambda, x) = \left( \mathbb{1} + \mathbf{\Psi}^\infty_1(x)\lambda^{-1} + \mathcal{O}(\lambda^{-2}) \right) \ee^{\ii x \lambda \sigma_3 /2} \lambda^{-\Theta_\infty  \sigma_3/2} \quad \mbox{as} \quad  \lambda \to \infty,
\end{equation}
and 
\begin{equation}
\label{eq:Psi-asymptotic-zero_new}
\boldsymbol \Psi(\lambda, x) = \left( \mathbf{\Psi}^0_0(x) + \mathbf{\Psi}^0_1(x)\lambda + \right) \ee^{-\ii x \lambda^{-1} \sigma_3 /2} \lambda^{\Theta_0 \sigma_3/2} \quad \mbox{as} \quad  \lambda \to 0, 
\end{equation}
with $\Theta_0=\frac{\alpha}{2}+n,\quad \Theta_\infty=n+2-\frac{\alpha}{2}$. The cut for power function goes along $\ii\mathbb{R}_-$.
\end{enumerate}
\end{rhp}
\begin{figure}
\begin{center}
\includegraphics[scale = 1]{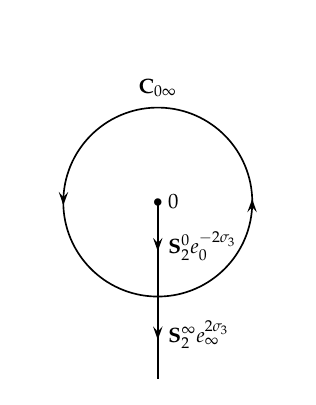}
\end{center}
\caption{The jump contour $L$ for $\mathbf{\Psi}(\lambda, x)$ and definition of $\mathbf J(\lambda)$ when $x>0$. }
\label{fig:1}
\end{figure}
For convenience of the reader we will go over connection of the above Riemann-Hilbert problem to the solution of Painlev\'e-III equation, see  \cite[Lemma 2]{BMS18} , \cite[Theorem 5.4]{FIKN}.
\begin{proposition} Fix the parameters $b_1, b_2,\alpha\in\C $ and $\re(x)>0$. Assume that the Riemann-Hilbert problem \ref{rhp:initial} is solvable for $x$ in some domain $ D\subset \mathbb{C}$.
The combination 
    \begin{equation}
    w_n(x,\alpha)=\frac{-\ii \mathbf{\Psi}^\infty_{1,12}(x)}{\mathbf{\Psi}^0_{1,11}(x)\mathbf{\Psi}^0_{1,12}(x)}
        \label{eq:u-recover-general}
    \end{equation}
solves Painlev\'e III equation \eqref{eq:v_n_painleve_equation}.
\end{proposition}
\begin{proof}
We observe the following expansion for $\lambda\to\infty$,
\begin{equation}
\begin{split}
\mathbf{A}(\lambda,x)&=\frac{\partial \mathbf{\Psi}}{\partial \lambda}\mathbf{\Psi}^{-1}=\frac{\ii x}{2}\sigma_3 + \left(\frac{\ii x}{2}\big[\mathbf{\Psi}_1^{\infty}(x),\sigma_3\big]-\frac{\Theta_{\infty}}{2}\sigma_3\right)\lambda^{-1}\\
&\quad\quad{}+\left(-\mathbf{\Psi}_1^{\infty}(x)-\frac{\Theta_{\infty}}{2}\big[\mathbf{\Psi}_1^{\infty}(x),\sigma_3\big]+\frac{\ii x}{2}\Big\{\big[\mathbf{\Psi}_2^{\infty}(x),\sigma_3\big]-\big[\mathbf{\Psi}_1^{\infty}(x),\sigma_3\big]\mathbf{\Psi}_1^{\infty}(x)\Big\}\right)\lambda^{-2}+\mathcal{O}(\lambda^{-3}),\\
\mathbf{B}(\lambda,x)&=\frac{\partial \mathbf{\Psi}}{\partial x}\mathbf{\Psi}^{-1}=\frac{\ii}{2}\sigma_3\lambda +\frac{\ii}{2}[\mathbf{\Psi}_1^{\infty}(x),\sigma_3] + \left(\mathbf{\Psi}_{1}^{\infty\prime}(x)+\frac{\ii}{2}\big[\mathbf{\Psi}_2^{\infty}(x),\sigma_3\big]-\frac{\ii}{2}\big[\mathbf{\Psi}_1^{\infty}(x),\sigma_3\big]\mathbf{\Psi}_1^{\infty}(x)\right)\lambda^{-1}\\+\mathcal{O}(\lambda^{-2}).
\end{split}
\label{eq:AnBn-large}
\end{equation}
Similarly, in the limit $\lambda\to 0$ we get
\begin{equation}
\begin{split}
\mathbf{A}(\lambda,x)&=\frac{\partial \mathbf{\Psi}}{\partial \lambda}\mathbf{\Psi}^{-1}=\frac{\ii x}{2}\mathbf{\Psi}_0^{0}(x)\sigma_3\mathbf{\Psi}_0^{0}(x)^{-1}\lambda^{-2}\\
&\quad{}+\left(\frac{\Theta_0}{2}\mathbf{\Psi}_0^{0}(x)\sigma_3\mathbf{\Psi}_0^{0}(x)^{-1}+
\frac{\ii x}{2}\mathbf{\Psi}_1^{0}(x)\sigma_3\mathbf{\Psi}_0^{0}(x)^{-1}-\frac{\ii x}{2}\mathbf{\Psi}_0^{0}(x)\sigma_3
\mathbf{\Psi}_0^{0}(x)^{-1}\mathbf{\Psi}_1^{0}(x)\mathbf{\Psi}_0^{0}(x)^{-1}\right)\lambda^{-1}\\
&\quad{}+\mathcal{O}(1)\\
\mathbf{B}(\lambda,x)&=\frac{\partial \mathbf{\Psi}}{\partial x}\mathbf{\Psi}^{-1}=-\frac{\ii}{2}\mathbf{\Psi}_0^{0}(x)\sigma_3\mathbf{\Psi}_0^{0}(x)^{-1}\lambda^{-1}+\mathbf{\Psi}_{0}^{0\prime}(x)\mathbf{\Psi}_0^0(x)^{-1}+\frac{\ii}{2}\big[\mathbf{\Psi}_0^0(x)\sigma_3\mathbf{\Psi}_0^0(x)^{-1},\mathbf{\Psi}_1^0(x)\mathbf{\Psi}_0^0(x)^{-1}\big]\\+\mathcal{O}(\lambda).
\end{split}
\label{eq:AnBn-small}
\end{equation}
Functions $\mathbf{A}(\lambda,x)$ and $\mathbf{B}(\lambda,x)$ are analytic with respect to $\lambda$ with only possible singularities at $\lambda=0$ and $\lambda=\infty$. The Liouville's theorem shows that $\mathbf{A}(\lambda,x)$ and $\mathbf{B}(\lambda,x)$ are Laurent polynomials:
\begin{equation}
\mathbf{A}(\lambda,x)=\frac{\ii x}{2}\sigma_3 +
\left(\frac{\ii x}{2}\big[\mathbf{\Psi}_1^{\infty}(x),\sigma_3\big]-\frac{\Theta_{\infty}}{2}\sigma_3\right)\lambda^{-1}
+ \frac{\ii x}{2}\mathbf{\Psi}_0^{0}(x)\sigma_3\mathbf{\Psi}_0^{0}(x)^{-1}\lambda^{-2}
\label{eq:An}
\end{equation}
and
\begin{equation}
\mathbf{B}(\lambda,x)=
\frac{\ii}{2}\sigma_3\lambda + \frac{\ii}{2}[\mathbf{\Psi}_1^{\infty}(x),\sigma_3] -\frac{\ii}{2}\mathbf{\Psi}_0^{0}(x)\sigma_3
\mathbf{\Psi}_0^{0}(x)^{-1}\lambda^{-1}.
\label{eq:Bn}
\end{equation}
Denote
\begin{equation}\label{e:5}
y(x)=-\ii x\mathbf{\Psi}_{1,12}^{\infty}(x),\quad v(x)=\ii x\mathbf{\Psi}_{1,21}^{\infty}(x),\quad s(x)=-x\mathbf{\Psi}_{0,11}^0(x)\mathbf{\Psi}_{0,12}^{0}(x),\quad t(x)=\frac{\mathbf{\Psi}_{0,21}^{0}(x)}{\mathbf{\Psi}_{0,11}^{0}(x)}
\end{equation}	
Using the identity $1=\det(\Psi(\lambda,x))=\det(\mathbf{\Psi}_{0}^{0}+\mathbf{\Psi}_{1}^{0}\lambda+\ldots)=\det(\mathbf{\Psi}_{0}^{0})=1$ we eliminate $\mathbf{\Psi}_{0,22}^{0}(x)$ and arrive to formulas 
\begin{equation}
\mathbf{A}(\lambda,x)=\frac{\ii x}{2}\sigma_3+\frac{1}{\lambda}\begin{bmatrix}-\frac{1}{2}\Theta_{\infty} & y\\ v & \frac{1}{2}\Theta_{\infty}\end{bmatrix}+\frac{1}{\lambda^2}\begin{bmatrix}\tfrac{1}{2}\ii x-\ii st & \ii s\\ -\ii t(st-x) & -\tfrac{1}{2}\ii x+\ii st\end{bmatrix},
\label{eq:lambda-eqn}
\end{equation}
and
\begin{equation}
\quad\mathbf{B}(\lambda,x)=\frac{\ii\lambda}{2}\sigma_3+\frac{1}{x}\begin{bmatrix}0 & y\\ v& 0\end{bmatrix}-\frac{1}{\lambda x}\begin{bmatrix}\tfrac{1}{2}\ii x-\ii st & \ii s\\ -\ii t(st-x) & -\tfrac{1}{2}\ii x+\ii st\end{bmatrix}.
\label{eq:x-eqn}
\end{equation}

The compatibility condition 
$$\frac{\partial \mathbf{A}}{\partial x}-\frac{\partial \mathbf{B}}{\partial \lambda} + [\mathbf{A},\mathbf{B}]=\mathbf{0}$$ for the simultaneous equations
\begin{align}
    \frac{\partial \mathbf{\Psi}}{\partial \lambda}(\lambda,x)=\mathbf{A}(\lambda,x)\mathbf{\Psi}(\lambda,x),\quad  \frac{\partial \mathbf{\Psi}}{\partial x}(\lambda,x)=\mathbf{B}(\lambda,x)\mathbf{\Psi}(\lambda,x).
\end{align}
is the first-order system of nonlinear differential equations
\begin{equation}
\begin{gathered}
	x\frac{\dd y}{\dd x}=-2xs+\Theta_{\infty}y,\ \ \ \ \ \ \ x\frac{\dd v}{\dd x}=-2xt(st-x)-\Theta_{\infty}v,
\\
	x\frac{\dd s}{\dd x}=(1-\Theta_\infty)s-2xy+4yst,\ \ \ \ \ \ \ x\frac{\dd t}{\dd x}=\Theta_\infty t-2yt^2+2v.
\end{gathered}
\label{eq:PIII-system}
\end{equation}
One can notice that expression
\begin{equation}
I=\frac{2\Theta_\infty}{x}st-\Theta_\infty-\frac{2}{x}yt(st-x)+\frac{2}{x}vs
\label{eq:Integral}
\end{equation}
is a conserved quantity, i.e, \eqref{eq:PIII-system} implies that $\frac{\dd I}{\dd x}=0$ holds identically.
%
%
Using \eqref{eq:PIII-system} one can show that the combination 
\begin{equation}
w_n(x)=-\frac{y(x)}{s(x)}
\label{eq:u-recover}
\end{equation}
satisfies the differential equation
\begin{equation}
	x\frac{\dd w_n}{\dd x}=2x-(1-2\Theta_{\infty})w_n+4stw_n^2-2xw_n^2.
\label{eq:first-order-PIII}
\end{equation}
Taking another $x$-derivative and letting $\Theta_0$ denote the constant value of the integral $I$ one then obtains
the Painlev\'e-III equation \eqref{eq:v_n_painleve_equation} after substitution $\Theta_0=\frac{\alpha}{2}+n,\quad \Theta_\infty=n+2-\frac{\alpha}{2}$.
\end{proof}

We would like to establish the following result.
\begin{proposition}\label{prop:monodromy}
    The special function solution $w_n(x,\alpha)=-\ii u_n(-2\ii x,\alpha)$ of the Painlev\'e-III equation \eqref{eq:v_n_painleve_equation} with $u_n(x,\alpha)$ described by \eqref{def:qn} is given by 
    \begin{equation}
    w_n(x,\alpha)=\frac{-\ii \mathbf{\Psi}^\infty_{1,12}(x)}{\mathbf{\Psi}^0_{1,11}(x)\mathbf{\Psi}^0_{1,12}(x)}.
    \end{equation}
    where $\mathbf{\Psi}^\infty_1(x), \mathbf{\Psi}^0_1(x)$ are coefficients in the asymptotic expansions \eqref{eq:Psi-asymptotic-zero_new}, \eqref{eq:Psi-asymptotic-infinity_new} of the solution $\mathbf{\Psi}(\lambda,x)$ of the Riemann-Hilbert problem \ref{rhp:initial}.
\end{proposition}
To prove this Proposition we will need two Lemmas. One will address the case $n=0$, the other will extend to general $n \in \mathbb{N}$.
\begin{lemma}\label{lem:init}
 The statement of Proposition \ref{prop:monodromy} holds for $n=0$.
\end{lemma}
\begin{proof}
  We start by considering $\mathbf{Y}(\lambda,x)=\mathbf{\Psi}(\lambda,x)\lambda^{\Theta_\infty\sigma_3/2}\ee^{\ii x\lambda^{-1}\sigma_3/2-\ii x\lambda\sigma_3/2}$. It satisfies the following Riemann-Hilbert problem.
    \begin{rhp} \label{rhp:initial_Y}
Fix the parameters $b_1, b_2,\alpha\in\C $ and $\re(x)>0$. We seek a $2 \times 2$ matrix function $\lambda\mapsto\mathbf Y(\lambda, x)$ satisfying:
\begin{enumerate}
\item[]\textbf{Analyticity:} $\mathbf Y (\lambda, x)$ is analytic in $\C \setminus \Gamma$, where $L = \{ | \lambda| = 1\} \cup \ii \R_{-}$ is the jump contour shown in Figure \ref{fig:2}. 
\item[] \textbf{Jump condition:} $\mathbf Y (\lambda, x)$ has continuous boundary values on $L \setminus \{0\}$ from each component of $\mathbb{C}\setminus L$, which satisfy
$\mathbf Y_+ (\lambda, x) = \mathbf Y_- (\lambda, x)  \widehat{\mathbf{J}}(\lambda,x)$, where $\widehat{\mathbf{J}}(\lambda,x)$ is as shown in Figure \ref{fig:2} and where the $+$ (resp., $-$) subscript denotes a boundary value taken from the left (resp., right) of an arc of $L$. The parameters on Figure \ref{fig:2} have values
\begin{align}
&\widehat{\mathbf{C}}_{0\infty}=\begin{pmatrix}
        1&0\\2b_1\lambda_-^{2-\alpha/2}\ee^{\ii x(\lambda^{-1}-\lambda)}&1
    \end{pmatrix},\\
    & \widehat{\mathbf{S}}_2^{\infty}=\begin{pmatrix}
        1&0\\2(b_1-b_2\ee^{\ii\pi\alpha})\lambda_-^{2-\alpha/2}\ee^{\ii x(\lambda^{-1}-\lambda)}&1
    \end{pmatrix},\quad \widehat{\mathbf{S}}_2^{0}=\begin{pmatrix}
        1&0\\2\ee^{\ii\pi\alpha}(b_1-b_2)\lambda_-^{2-\alpha/2}\ee^{\ii x(\lambda^{-1}-\lambda)}&1
    \end{pmatrix}.
\end{align}
\item[] \textbf{Normalization:} $\mathbf Y(\lambda,x)$ satisfies the asymptotic conditions
\begin{equation}
\label{eq:y-asymptotic-infinity}
\boldsymbol Y(\lambda, x) = \left( \mathbb{1} +\mathbf{\Psi}^\infty_1(x)\lambda^{-1}+\ii x\lambda^{-1}\sigma_3/2+ \mathcal{O}(\lambda^{-2}) \right) \quad \mbox{as} \quad  \lambda \to \infty,
\end{equation}
and 
\begin{equation}
\label{eq:y-asymptotic-zero}
\boldsymbol Y(\lambda, x) = \left( \mathbf{\Psi}^0_1(x) + \mathcal{O}(\lambda) \right)  \lambda^{ \sigma_3} \quad \mbox{as} \quad  \lambda \to 0. 
\end{equation}

\end{enumerate}
\end{rhp}
\begin{figure}
\begin{center}
\includegraphics[scale = 1]{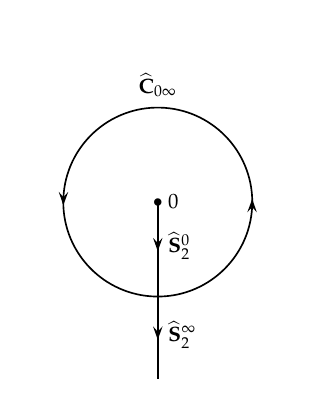}
\end{center}
\caption{The jump contour $L$ for $\mathbf{Y}(\lambda, x)$ and definition of $\mathbf{ \widehat{J}}(\lambda,x)$ when $x>0$. }
\label{fig:2}.
\end{figure}
We used above the fact that $\lambda_+^{\Theta_\infty\sigma_3/2}=\lambda_-^{\Theta_\infty\sigma_3/2}\ee^{{-\ii\pi\Theta_\infty\sigma_3}{}}$ along $\ii\mathbb{R}_-$. 

   We notice that the second column of $\mathbf{Y}(\lambda,x)$ has no jump, has a finite limit at infinity and has a simple pole at zero. Therefore, it has the form \begin{align}
        \mathbf{Y}_{12}(\lambda,x)=\frac{\mathrm{a}(x)}{\lambda},\quad \mathbf{Y}_{22}(\lambda,x)=1+\frac{\mathrm{b}(x)}{\lambda}.
    \end{align}
The jump condition on the first column of $\mathbf{Y}(\lambda,x)$ and the asymptotic condition at infinity imply that
\begin{align}
        \mathbf{Y}_{11}(\lambda,x)=1+\intop_{L}\frac{\mathbf{Y}_{12}(\mu,x)\widehat{\mathbf{J}}_{21}(\mu,x)}{\mu-\lambda}\dfrac{d \mu}{2\pi\ii},\quad \mathbf{Y}_{21}(\lambda,x)=\intop_{L}\frac{\mathbf{Y}_{22}(\mu,x)\widehat{\mathbf{J}}_{21}(\mu,x)}{\mu-\lambda}\dfrac{d \mu}{2\pi\ii}.
\end{align}

 Asymptotic condition at zero implies that $\mathbf{Y}_{11}(0,x)=0$, $\mathbf{Y}_{21}(0,x)=0$. That provides us with formulas for $\mathrm{a}(x)$ and $\mathrm{b}(x)$
   \begin{align}\label{def:a_b}
       \mathrm{a}(x)=-\left(\intop_L \widehat{\mathbf{J}}_{21}(\mu,x)\mu^{-2}\dfrac{d \mu}{2\pi\ii}\right)^{-1},\quad  \mathrm{b}(x)=-\frac{\intop_L \widehat{\mathbf{J}}_{21}(\mu,x)\mu^{-1}\dfrac{d \mu}{2\pi\ii}}{\intop_L \widehat{\mathbf{J}}_{21}(\mu,x)\mu^{-2}\dfrac{d \mu}{2\pi\ii}}.
   \end{align}
We would like to express $a(x)$ and $b(x)$ in terms of cylinder functions. We will use contour integral representations for $\mathcal{C}_\nu(-2\ii x)$ with $\re(x)>0$.  
\begin{align}\label{eq:rotated_contour_integral}
\mathcal{C}_\nu(-2\ii x)=\intop_{\Gamma_5\cup\Gamma_6}\frac{\ee^{-\ii x{}\left(t-\frac{1}{t}\right)}}{2 \pi \ii t^{1+\nu}}(2b_1\chi_{\Gamma_5}(t)-2b_2\chi_{ \Gamma_6}(t))dt.
\end{align}
 It can be obtained by rotation of contours $\Gamma_3$ and $\Gamma_4$ counterclockwise. Contours $\Gamma_5$ and $\Gamma_6$ can be found on Figure \ref{fig:contour_gamma56}.
\begin{figure*}[ht]
\begin{subfigure}[t]{0.4\textwidth}
\includegraphics{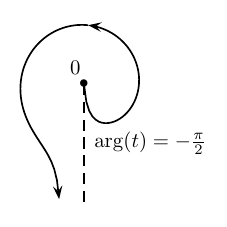}
\caption{Contour $\Gamma_5$}
        \end{subfigure}
        \begin{subfigure}[t]{0.4\textwidth}
\includegraphics{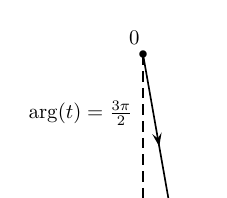}
\caption{Contour $\Gamma_6$}
    \end{subfigure}
    \caption{Contours of integration for $\Cy_\nu(-2\ii x)$.}
        \label{fig:contour_gamma56}
        \end{figure*}

Using the jump relation again $\lambda_+^{2-\frac{\alpha}{2}}=\lambda_-^{2-\frac{\alpha}{2}}\ee^{\ii\pi\alpha}$ along $\ii\mathbb{R}_-$ we notice that contour integral representation \eqref{eq:rotated_contour_integral} allows to rewrite
\begin{align}
    a(x)=-\frac{1}{\mathcal{C}_{\frac{\alpha}{2}-1}(-2\ii x)},\quad b(x)=-\frac{\mathcal{C}_{\frac{\alpha}{2}-2}(-2\ii x)}{\mathcal{C}_{\frac{\alpha}{2}-1}(-2\ii x)}.
\end{align}

On the next steps we would like to evaluate the fraction $\frac{-\ii \mathbf{\Psi}^\infty_{1,12}(x)}{\mathbf{\Psi}^0_{1,11}(x)\mathbf{\Psi}^0_{1,12}(x)}$. We have

 \begin{align}&\mathbf{\Psi}^0_{0,11}(x)=\left.\frac{\partial}{\partial \lambda}\mathbf{Y}_{11}(\lambda,x)\right|_{\lambda=0}=-\frac{\intop_L \widehat{\mathbf{J}}_{21}(\mu,x)\mu^{-3}\dfrac{d \mu}{2\pi\ii}}{\intop_L \widehat{\mathbf{J}}_{21}(\mu,x)\mu^{-2}\dfrac{d \mu}{2\pi\ii}}=-\frac{\mathcal{C}_{\frac{\alpha}{2}}(-2\ii x)}{\mathcal{C}_{\frac{\alpha}{2}-1}(-2\ii x)},\\&
\mathbf{\Psi}^0_{0,12}(x)=a(x),\quad \mathbf{\Psi}^\infty_{0,12}(x)=a(x).
\end{align}
Therefore
\begin{align}
\frac{-\ii \mathbf{\Psi}^\infty_{1,12}(x)}{\mathbf{\Psi}^0_{1,11}(x)\mathbf{\Psi}^0_{1,12}(x)}=\ii \frac{\intop_L \widehat{\mathbf{J}}_{21}(\mu,x)\mu^{-2}\dfrac{d \mu}{2\pi\ii}}{\intop_L \widehat{\mathbf{J}}_{21}(\mu,x)\mu^{-3}\dfrac{d \mu}{2\pi\ii}}=\ii\frac{\mathcal{C}_{\frac{\alpha}{2}-1}(-2\ii x)}{\mathcal{C}_{\frac{\alpha}{2}}(-2\ii x)}.
 \end{align}
According to Proposition \ref{prop:u_0}
\begin{align}
    w_0(x,\alpha)=-\ii u_0(-2\ii x,\alpha)=-\frac{\alpha}{4x}+\ii\frac{\mathcal{C}'_{\frac{\alpha}{2}}(-2\ii x)}{\mathcal{C}_{\frac{\alpha}{2}}(-2\ii x)}=\ii\frac{\mathcal{C}_{\frac{\alpha}{2}-1}(-2\ii x)}{\mathcal{C}_{\frac{\alpha}{2}}(-2\ii x)}
\end{align}
and we arrive at the desired result. 
\end{proof}
Define the Pauli-type matrices
\begin{equation*}
\widehat{\sigma}=\begin{bmatrix}1 & 0\\0 & 0\end{bmatrix}\quad\text{and}\quad
\widecheck{\sigma}=\begin{bmatrix}0 & 0 \\0 & 1\end{bmatrix}.
\end{equation*}
 We introduce the Schlesinger transformations by formulas 
\begin{equation}
{\mathbf{\Psi}}^{(1)}(\lambda,x)=(\widehat{\sigma}{\lambda}^{1/2}+{\mathbf{R}^{(1)}}(x){\lambda}^{-1/2})\mathbf{\Psi}(\lambda,x),
\label{eq:Schlesinger}
\end{equation}
\begin{equation}
{\mathbf{\Psi}}^{(3)}(\lambda,x)=(\widecheck{\sigma}{\lambda}^{1/2}+{\mathbf{R}^{(3)}}(x){\lambda}^{-1/2})\mathbf{\Psi}(\lambda,x),
\label{eq:Schlesinger3}
\end{equation}
where
\begin{equation}
\mathbf{R}^{(1)}(x)=\begin{bmatrix}\frac{\ii ty}{x}&-\frac{\ii y }{x}\\[0.1cm]
-t&1\end{bmatrix},
\quad\mathbf{R}^{(3)}(x)=\begin{bmatrix}1&-\frac{1}{t}\\[0.1cm]
\frac{\ii v}{x}&-\frac{\ii v}{t x}\end{bmatrix}.
\end{equation}
Let us rewrite equation \eqref{eq:v_n_painleve_equation} using parameters $\Theta_0$, $\Theta_\infty$.
\begin{align}
    \label{eq:v_n_painleve_equation_theta}
w''=\dfrac{\left( w'\right)^2}{w}-\dfrac{w'}{x}  + \dfrac{4\Theta_0w^2 + 4(1-\Theta_\infty)}{x}+4w^3-\frac{4}{w}.
\end{align}
We introduce the Bäcklund transformations 
\begin{equation}\label{eq:backlund1n}
            B_1: (w(x),\Theta_0,\Theta_\infty)\to \left(\frac{xw'(x)+2xw^2(x)+(-3+2\Theta_\infty)w(x)+2x}{w(x)(xw'(x)+2xw^2(x)+(1+2\Theta_0)w(x)+2x)},\Theta_0+1,\Theta_\infty-1\right)
        \end{equation}

\begin{equation}\label{eq:backlund3n}
            B_3: (w(x),\Theta_0,\Theta_\infty)\to \left(\frac{-xw'(x)-2xw^2(x)+(-1+2\Theta_\infty)w(x)+2x}{w(x)(xw'(x)+2xw^2(x)+(1+2\Theta_0)w(x)-2x)},\Theta_0+1,\Theta_\infty+1\right).
        \end{equation}

        Assume that $w(x)$ solves the Painlev\'e-III equation \eqref{eq:v_n_painleve_equation_theta} and denote \\$B_1(w(x),\Theta_0,\Theta_\infty)=(W(x),\Theta_0+1,\Theta_\infty-1)$. Then $W(x)$ solves Painlev\'e-III equation
\begin{equation*}
            W''(x)=\dfrac{\left( W'(x)\right)^2}{W(x)}-\dfrac{W'(x)}{x}  + \dfrac{4(\Theta_0+1) W^2(x) + 4(2-\Theta_\infty)}{x}+4W^3(x)-\frac{4}{W(x)}.
        \end{equation*}
Similarly, if we denote $B_3(w(x),\Theta_0,\Theta_\infty)=(W(x),\Theta_0+1,\Theta_\infty+1)$ then $W(x)$ solves Painlev\'e-III equation
\begin{equation*}
            W''(x)=\dfrac{\left( W'(x)\right)^2}{W(x)}-\dfrac{W'(x)}{x}  + \dfrac{4(\Theta_0+1) W^2(x) - 4\Theta_\infty}{x}+W^3(x)-\frac{1}{W(x)}.
        \end{equation*}
Using this notation we can see that 
\begin{align}
    B_3^n\left(w_0(x,\alpha),\frac{\alpha}{2},2-\frac{\alpha}{2}\right)= \left(w_n(x,\alpha),\frac{\alpha}{2}+n,2-\frac{\alpha}{2}+n\right).
\end{align}
\begin{lemma}\label{lem:schles}
Schlesinger transformations $\mathbf{\Psi}(\lambda,x)\to \mathbf{\Psi}^{(1)}(\lambda,x)$, $\mathbf{\Psi}(\lambda,x)\to \mathbf{\Psi}^{(3)}(\lambda,x)$
induce B\"acklund transformations $B_1$ and $B_3$ on solutions of the Painlev\'e-III equation \eqref{eq:v_n_painleve_equation_theta}. 
\end{lemma}
\begin{proof}
First, we notice that functions
$\mathbf{\Psi}^{(1)}(\lambda,x)$, $\mathbf{\Psi}^{(3)}(\lambda,x)$ have the same jumps as $\mathbf{\Psi}(\lambda,x)$ except for an additional sign change along $\ii \mathbb{R}_-$. Next, we look at the asymptotic behavior around infinity and get using \eqref{e:5} that 
\begin{equation}
\label{eq:Psi-asymptotic-infinity_new_back}
\boldsymbol \Psi^{(j)}(\lambda, x) = \left( \mathbb{1} + \mathbf{\Psi}^{\infty,(j)}_1(x)\lambda^{-1} + \mathcal{O}(\lambda^{-2}) \right) \ee^{\ii x \lambda \sigma_3 /2} \lambda^{-\Theta_\infty^{(j)}  \sigma_3/2} \quad \mbox{as} \quad  \lambda \to \infty,\quad j=1,3,
\end{equation}
\begin{equation}
\label{eq:Psi-asymptotic-zero_new_back}
\boldsymbol \Psi^{(j)}(\lambda, x) = \left( \mathbf{\Psi}^{0,(j)}_0(x) + \mathbf{\Psi}^{0,(j)}_1(x)\lambda + \right) \ee^{-\ii x \lambda^{-1} \sigma_3 /2} \lambda^{\Theta_0^{(j)} \sigma_3/2} \quad \mbox{as} \quad  \lambda \to 0, \quad j=1,3,
\end{equation}
where
    \begin{align}
    \Theta_\infty^{(1)}=\Theta_\infty-1,\quad  \Theta_0^{(1)}=\Theta_0+1,   \\
    \Theta_\infty^{(3)}=\Theta_\infty+1,\quad  \Theta_0^{(3)}=\Theta_0+1.
    \end{align}

  Identifying \eqref{eq:AnBn-large} with \eqref{eq:AnBn-small} we get
  \begin{align}
    &x\mathbf{\Psi}^{\infty}_{2,12} -{\ii y\mathbf{\Psi}^{\infty}_{1,22} }{}= -{s}{}+\frac{\Theta_\infty
   y}{x}-\frac{y}{x},\\
&x\mathbf{\Psi}^{\infty}_{2,21}+{\ii v \mathbf{\Psi}^{\infty}_{1,11} }{}=-{st^2}{}+xt-\frac{\Theta_\infty v}{x}-\frac{v}{x},\\
   &\frac{\mathbf{\Psi}^{0}_{1,21}-t\mathbf{\Psi}^{0}_{1,11}}{\mathbf{\Psi}^{0}_{0,11}}=\frac{\ii t y}{s}-\frac{\ii \Theta_0}{2 s}-\frac{\ii
   \Theta_\infty}{2 s},\\&
   \left({s\mathbf{\Psi}^{0}_{1,22}+x\mathbf{\Psi}^{0}_{1,12}-st\mathbf{\Psi}^{0}_{1,12}}\right){\mathbf{\Psi}^{0}_{0,11}}={\ii y}{}-\frac{\ii t y s}{x}-\frac{\ii \Theta_0 s}{2
   x}+\frac{\ii \Theta_\infty s}{2 x}.
  \end{align}
  Using this identities we can determine the following expressions appearing as result of Schlesinger transformation
  \begin{align}
    &  y^{(1)}=i s+\frac{\ii t y^2}{x}-\frac{\ii \Theta_\infty y}{x}+\frac{\ii
   y}{x},\quad v^{(1)}=-\ii x t,\\&
   s^{(1)}=\frac{\ii t y^3}{x s}-\frac{\ii \Theta_0 y^2}{2 x
   s}-\frac{\ii \Theta_\infty y^2}{2 x s}+\ii y,\\&
   t^{(1)}=\frac{\ii x (\Theta_0+\Theta_\infty-2 ty)}{y
   (\Theta_0+\Theta_\infty-2 t y)-2 x s},\\  &  v^{(3)}=-\ii s t^2-\frac{i v^2}{x t}+i x t-\frac{\ii \Theta_\infty
   v}{x}-\frac{\ii v}{x},\quad y^{(3)}=\frac{\ii x}{t},\\&
   s^{(3)}=\frac{\ii \Theta_0 x}{2 s t^2}+\frac{\ii \Theta_\infty x}{2
   s t^2}-\frac{\ii x y}{s t},\\&
   t^{(3)}=\frac{i v}{x}-\frac{2 \ii st^2}{\Theta_0+\Theta_\infty-2 ty}.
  \end{align}
  Remembering that $w=-\frac{y}{s}$ we get 
\begin{align}
  &  w^{(1)}=-\frac{2 s (x s+y (-\theta_\infty+t y+1))}{y (2 x
   s+y(-\Theta_0-\Theta_\infty+2 t y)},\\&
   w^{(3)}=-\frac{2 s t}{\Theta_0+\Theta_\infty-2 t y}.
\end{align}
Expressing $t$ in terms of $w'$ based on \eqref{eq:first-order-PIII} we arrive at \eqref{eq:backlund1n}, \eqref{eq:backlund3n}, as promised.

\end{proof}
\begin{proof}[Proof of Proposition \ref{prop:monodromy}]
Lemma \ref{lem:init} provides the monodromy data for the initial solution. Using Lemma \ref{lem:schles} we can see how it changes when we apply Schlesinger transformation or Bäcklund tranformation. The parameters $\Theta_0$ and $\Theta_\infty$ get appropriate shifts and the jump along the negative imaginary axis changes sign. That provides us with monodromy data for $w_n(x,\alpha)$.

\end{proof}
\section{Asymptotics at zero for \texorpdfstring{$\alpha\in 2\mathbb{Z}+\ii\mathbb{R}$}{alpha in 2Z+iR}}\label{app:special_asymptotics}
\subsection{Case of \texorpdfstring{$\alpha\in 2\mathbb{Z}+\ii\mathbb{R}$}{alpha in 2Z+iR}, \texorpdfstring{$\alpha\notin 2\mathbb{Z}$}{alpha not in 2Z}}\label{app:boundary_case}
We only need to consider the case of $\re(\alpha)=2n-2-4j$, $j=0,\ldots n-1$, the other values of $\alpha\in 2\mathbb{Z}+\ii\mathbb{R}$, $\alpha\notin 2\mathbb{Z}$ are covered by the Theorem \ref{thm:hankel_bessel_det_asymptotic}. The minimum of $\re(p(r,\alpha,n))$ is reached at $r_{min}=\frac{2n-\re(\alpha)}{4}=j+\frac{1}{2}$. Therefore there are two values of $r$ that produce the leading term of asymptotics: $r_c^{(1)}=j$ and $r_c^{(2)}=j+1$. Denote
\begin{align}\label{def:delta}
&\delta_{n,j}(\alpha,x)=(-1)^{\frac{n(n-1)}{2}+nj+n-j}\left( \frac{d_2}{\pi}\right)^n\left(\frac{d_1}{d_2}\sin\left(\frac{\pi\alpha}{2}\right)+\cos\left(\frac{\pi\alpha}{2}\right)\right)^j\\&\times\frac{G(n-j+1)G(-\frac{\alpha}{2}+n-j+1)G(j+1)G(j+\frac{\alpha}{2}+1)}{G(-\frac{\alpha}{2}+n-2j+1)G(\frac{\alpha}{2}-n+2j+1)}\left(\frac{x}{2}\right)^{(\alpha-2n+2j)j-\frac{n\alpha}{2}}
\end{align}
We arrive at the following result.
\begin{theorem}\label{thm:hankel_bessel_det_asymptotic_ext_imag_alpha}
 The Toeplitz determinant  \eqref{def:hankel_bessel_determinant} admits the following  $x \rightarrow 0$, $-\pi<\arg(x)<\pi$ asymptotics for fixed $d_1,d_2\in \mathbb{C}$, $n\in\mathbb{N}\cup\{0\}$, $\alpha\in\mathbb{C}\setminus (2\mathbb{Z})$, $d_2\neq 0$, $d_1\sin\left(\frac{\pi\alpha}{2}\right)+d_2\cos\left(\frac{\pi\alpha}{2}\right)\neq 0$,  and $Re(\alpha)=2n-4j-2\text{ for some }j=0, 1, 2 \ldots,n-1$, \begin{align*}&\Delta_n(x,\alpha)\sim \delta_{n,j}(\alpha,x)+\delta_{n,j+1}(\alpha,x), \text{ as }x\to 0,\quad -\pi<\arg(x)<\pi.\end{align*}
\normalsize
where $\delta_{n,j}(\alpha,x)$ is given by \eqref{def:delta}.\\
\end{theorem}

It implies the following asymptotics for the solution $u_n(x,\alpha)$.

\begin{theorem}\label{thm:q_n_asymptotic_imag_alpha} Solution \eqref{eq:q_n_formula} of the Painlev\'e-III equation \eqref{eq:q_n_painleve_equation}  admits the following  $x \rightarrow 0$, $-\pi<\arg(x)<\pi$ asymptotics for fixed $d_1,d_2\in \mathbb{C}$, $n\in\mathbb{N}\cup\{0\}$, $\alpha\in\mathbb{C}\setminus (2\mathbb{Z})$
\begin{enumerate}
    \item If $d_2\neq 0$, $d_1\sin\left(\frac{\pi\alpha}{2}\right)+d_2\cos\left(\frac{\pi\alpha}{2}\right)\neq 0$,   and $Re(\alpha)=2n-4j-2 \text{ for some }\\j=0, 1,\ldots,n-1$, then \begin{align*}&u_n(x,\alpha)\sim (-1)^{n} \left(\frac{d_1}{d_2}\sin\left(\frac{\pi\alpha}{2}\right)+\cos\left(\frac{\pi\alpha}{2}\right)\right)^{-1}\left(\frac{\Gamma(\frac{\alpha}{2}-n+2j+1)}{\Gamma(-\frac{\alpha}{2}+n-2j)}\right)^2\\&\times\frac{\Gamma(-\frac{\alpha}{2}+n-j+1)\Gamma(n-j)}{\Gamma(j+\frac{\alpha}{2}+1)\Gamma(j+1)}\left(\frac{x}{2}\right)^{-\alpha+2n-4j-1}+\\
    &(-1)^{n}\left(\frac{d_1}{d_2}\sin\left(\frac{\pi\alpha}{2}\right)+\cos\left(\frac{\pi\alpha}{2}\right)\right)\left(\frac{\Gamma(-\frac{\alpha}{2}+n-2j-1)}{\Gamma(\frac{\alpha}{2}-n+2j+2)}\right)^2\\& \times\frac{\Gamma(j+\frac{\alpha}{2}+1)\Gamma(j+2)}{\Gamma(-\frac{\alpha}{2}+n-j)\Gamma(n-j)}\left(\frac{x}{2}\right)^{\alpha-2n+4j+3}+\frac{x (-\alpha +2 n+2)}{(\alpha +4
   j-2 n+2)^2},\text{ as }x\to 0,\quad -\pi<\arg(x)<\pi.\end{align*}
   \item If $d_2\neq 0$, $d_1\sin\left(\frac{\pi\alpha}{2}\right)+d_2\cos\left(\frac{\pi\alpha}{2}\right)\neq 0$,   and $Re(\alpha)=2n+2$, then 
   \begin{align*}
&u_n(x,\alpha)\sim(-1)^n\left(\frac{d_1}{d_2}\sin\left(\frac{\pi\alpha}{2}\right)+\cos\left(\frac{\pi\alpha}{2}\right)\right)\frac{ \Gamma \left(\frac{\alpha }{2}\right) \Gamma
   \left(n-\frac{\alpha }{2}+1\right)  }{ \Gamma (n+1) \Gamma \left(\frac{\alpha
   }{2}-n\right)^2}\left(\frac{x}{2}\right)^{\alpha -2 n-1}+\frac{x}{2 n+2-\alpha},\\&\text{ as }x\to 0,\quad -\pi<\arg(x)<\pi.
   \end{align*}
    \item If $d_2\neq 0$, $d_1\sin\left(\frac{\pi\alpha}{2}\right)+d_2\cos\left(\frac{\pi\alpha}{2}\right)\neq 0$,   and $Re(\alpha)=2n-4j \text{ for some }\\j=0, 1,\ldots,n-1$, then \begin{align*}&\left(u_n(x,\alpha)\right)^{-1}\sim (-1)^{n} \left(\frac{d_1}{d_2}\sin\left(\frac{\pi\alpha}{2}\right)+\cos\left(\frac{\pi\alpha}{2}\right)\right)^{-1}\left(\frac{\Gamma(\frac{\alpha}{2}-n+2j)}{\Gamma(-\frac{\alpha}{2}+n-2j+1)}\right)^2\\&\times\frac{\Gamma(-\frac{\alpha}{2}+n-j+1)\Gamma(n-j+1)}{\Gamma(j+\frac{\alpha}{2})\Gamma(j+1)}\left(\frac{x}{2}\right)^{-\alpha+2n-4j+1}+\\
    &(-1)^{n}\left(\frac{d_1}{d_2}\sin\left(\frac{\pi\alpha}{2}\right)+\cos\left(\frac{\pi\alpha}{2}\right)\right)\left(\frac{\Gamma(-\frac{\alpha}{2}+n-2j)}{\Gamma(\frac{\alpha}{2}-n+2j+1)}\right)^2\\& \times\frac{\Gamma(j+\frac{\alpha}{2}+1)\Gamma(j+1)}{\Gamma(-\frac{\alpha}{2}+n-j+1)\Gamma(n-j)}\left(\frac{x}{2}\right)^{\alpha-2n+4j+1}-\frac{x (\alpha +2 n)}{(\alpha +4 j-2 n)^2},\text{ as }x\to 0,\quad -\pi<\arg(x)<\pi.\end{align*}
 \item If $d_2\neq 0$, $d_1\sin\left(\frac{\pi\alpha}{2}\right)+d_2\cos\left(\frac{\pi\alpha}{2}\right)\neq 0$,  and $Re(\alpha)=-2n$, then 
   \begin{align*}
&\left(u_n(x,\alpha)\right)^{-1}\sim(-1)^n\left(\frac{d_1}{d_2}\sin\left(\frac{\pi\alpha}{2}\right)+\cos\left(\frac{\pi\alpha}{2}\right)\right)^{-1}\frac{ \Gamma \left(1-\frac{\alpha }{2}\right) \Gamma
   \left(n+\frac{\alpha }{2}\right)  }{ \Gamma (n+1) \Gamma \left(1-n-\frac{\alpha
   }{2}\right)^2}\left(\frac{x}{2}\right)^{1 -2 n-\alpha}\\&-\frac{x}{\alpha+2n},\text{ as }x\to 0,\quad -\pi<\arg(x)<\pi.
   \end{align*}
   
\end{enumerate}
where $\Gamma(x)$ refers to the Gamma function.
\end{theorem}

\begin{proof}
    Using Theorem \ref{thm:hankel_bessel_det_asymptotic_ext_imag_alpha} we see that \begin{enumerate}
        \item For $\re(\alpha)=2n-4j-2,\quad j=0\ldots n-1$
        \begin{align*}
         u_n(x,\alpha)\sim   -\frac{(\delta_{n,j}(\alpha,x )+\delta_{n,j+1}(\alpha,x )) (\delta_{n+1,j+1}(\alpha -2,x)+\delta_{n+1,j+2}(\alpha -2,x))}{\delta_{n,j+1}(\alpha -2,x) \delta_{n+1,j+1}(\alpha,x)},\quad x\to 0.
        \end{align*}
          \item For $\re(\alpha)=2n+2$
        \begin{align*}
         u_n(x,\alpha)\sim  -\frac{\delta_{n,0}(\alpha,x ) (\delta_{n+1,0}(\alpha -2,x)+\delta_{n+1,1}(\alpha -2,x))}{\delta_{n,0}(\alpha -2,x) \delta_{n+1,0}(\alpha ,x)},\quad x\to 0.
        \end{align*}
                   \item For $\re(\alpha)=2n-4j,\quad j=0\ldots n-1$
        \begin{align*}
         u_n(x,\alpha)\sim   -\frac{\delta_{n,j}(\alpha,x ) \delta_{n+1,j+1}(\alpha -2,x)}{(\delta_{n,j}(\alpha -2,x)+\delta_{n,j+1}(\alpha -2,x)) (\delta_{n+1,j}(\alpha,x )+\delta_{n+1,j+1}(\alpha,x ))},\quad x\to 0.
        \end{align*}
           \item For $\re(\alpha)=-2n$
        \begin{align*}
         u_n(x,\alpha)\sim   -\frac{\delta_{n,n}(\alpha,x ) \delta_{n+1,n+1}(\alpha -2,x)}{\delta_{n,n}(\alpha -2,x) (\delta_{n+1,n}(\alpha,x )+\delta_{n+1,n+1}(\alpha,x ))},\quad x\to 0.
        \end{align*}
    \end{enumerate}
    After simplification we get the desired formulas.
\end{proof}
\subsection{Case of \texorpdfstring{$\alpha \in 2\mathbb{Z}$}{alpha in 2Z}, \texorpdfstring{$d_2\neq 0$}{nonzero d2}}\label{app:int_alpha_nonzero_d2}

One of the ways to obtain the desired asymptotics is to consider the limit $\im (\alpha)\to 0$ in the asymptotics from the previous section. Using \cite[\href{http://dlmf.nist.gov/5.17.E4}{(5.17.4)}]{DLMF} and the definition of digamma function $\psi(x)=\frac{\Gamma'(x)}{\Gamma(x)}$ we get from Theorem \ref{thm:hankel_bessel_det_asymptotic_ext_imag_alpha}.

\begin{theorem}\label{thm:hankel_bessel_det_asymptotic_ext_int_alpha_nonzero_d2}
 The Toeplitz determinant  \eqref{def:hankel_bessel_determinant} admits the following  $x \rightarrow 0$, $-\pi<\arg(x)<\pi$ asymptotics for fixed $d_1,d_2\in \mathbb{C}$, $n\in\mathbb{N}\cup\{0\}$, 
 $d_2\neq 0$,  and $\alpha=2n-4j-2 \text{ for some }j=0, 1, 2 \ldots,n-1$
\begin{align*}
   & \Delta_n(x,\alpha)\sim-\frac{d_2^{n}}{\pi^n}(-1)^{\frac{n(n-1)}{2}}\left(\frac{x}{2}\right)^{-2j-2j^2+n+2jn-n^2}G(j+1)G(j+2)G(n-j)G(n-j+1)\\&\times 
   \left(2 \ln(x)+\frac{d_1}{d_2}\pi +4\gamma+\psi(j+1)+\psi(n-j)-2\ln(2)\right ),\text{ as }x\to 0,\quad -\pi<\arg(x)<\pi.
\end{align*}
\normalsize
where $G(x)$ refers to the Barnes $G$-function, $\psi(x)$ is the digamma function, and $\gamma$ is the Euler-Mascheroni constant.\\
\end{theorem}
Similarly by taking limit $\im(\alpha)\to 0$ in Theorem \ref{thm:q_n_asymptotic_imag_alpha} we can get asymptotics for the solution.
\begin{theorem}\label{thm:q_n_asymptotic_int_alpha_nonzero_d2} Solution \eqref{eq:q_n_formula} of the Painlev\'e-III equation \eqref{eq:q_n_painleve_equation}  admits the following  $x \rightarrow 0$, $-\pi<\arg(x)<\pi$ asymptotics for fixed $d_1,d_2\in \mathbb{C}$, $n\in\mathbb{N}\cup\{0\}$, $d_2\neq 0$ 
\begin{enumerate}
    \item If $\alpha=2n-4j-2 \text{ for some }j=-1, 0,\ldots,n-1$, then \begin{align*}&u_n(x,\alpha)\sim  -\frac{x}{2}\left(2 \ln(x)+\frac{d_1}{d_2}\pi +4\gamma+\psi(j+2)+\psi(n-j)-2\ln(2)\right )\\&\times\left[(j+1)\left(2 \ln(x)+\frac{d_1}{d_2}\pi +4\gamma+\psi(j+2)+\psi(n-j)-2\ln(2)\right )-1\right],\text{ as }x\to 0,\quad -\pi<\arg(x)<\pi.\end{align*}
    \item If $\alpha=2n-4j \text{ for some }j=0, 1,\ldots,n$, then \begin{align*}&(u_n(x,\alpha))^{-1}\sim -\frac{x}{2}\left(2 \ln(x)+\frac{d_1}{d_2}\pi +4\gamma+\psi(j+1)+\psi(n-j+1)-2\ln(2)\right )\\&\times\left[(j-n)\left(2 \ln(x)+\frac{d_1}{d_2}\pi +4\gamma+\psi(j+1)+\psi(n-j+1)-2\ln(2)\right )+1\right],\text{ as }x\to 0,\quad -\pi<\arg(x)<\pi,
   \end{align*}
   where $\psi(x)$ is the digamma function and $\gamma$ is the Euler-Mascheroni constant.
\end{enumerate}
\end{theorem}

\subsection{Case of \texorpdfstring{$\alpha \in 2\mathbb{Z}$}{alpha in 2Z}, \texorpdfstring{$d_2= 0$}{zero d2}}\label{app:int_alpha_zero_d2}

For such values of parameters the contour integral representation \eqref{eq:tau-formula} takes form
 \begin{equation}
\Delta_n(x,\alpha)=\frac{d_1^n}{n!}(-1)^{\frac{n(n-1)}{2}}\intop_{\Gamma_1}\ldots \intop_{\Gamma_1}\prod_{1\leq j<k\leq n}(t_k-t_j)^2\prod_{k=1}^{n} \frac{\ee^{\frac{x}{2}\left(t_k-\frac{1}{t_k}\right)}}{2 \pi \ii t_k^{n+\frac{\alpha}{2}}}dt_k.\label{eq:tau-formula_zero_d2}.
        \end{equation}
    Let us assume $\alpha\geq0$. We make a change of variables $t_k=\frac{2}{x}s_k$ and deform the contour $\Gamma_1$ back to the original.    If we used different scaling change of variables, the coefficient next to the leading term would vanish. We get
         \begin{equation}
\Delta_n(x,\alpha)\sim\frac{d_1^n}{n!}(-1)^{\frac{n(n-1)}{2}}\left(\frac{x}{2}\right)^{\frac{n\alpha}{2}}\intop_{\Gamma_1}\ldots \intop_{\Gamma_1}\prod_{1\leq j<k\leq n}(s_k-s_j)^2\prod_{k=1}^{n} \frac{\ee^{s_k}}{2 \pi \ii s_k^{n+\frac{\alpha}{2}}}ds_k 
        \end{equation}
For $\alpha\notin 2\mathbb{Z}$ we already computed the integral above earlier, see \eqref{eq:H1_formula} with $r_c=n$. It is equal to the following expression
\begin{align*}
&\intop_{\Gamma_1}\ldots \intop_{\Gamma_1}\prod_{1\leq j<k\leq n}(s_k-s_j)^2\prod_{k=1}^{n} \frac{\ee^{s_k}}{2 \pi \ii s_k^{n+\frac{\alpha}{2}}}ds_k=\frac{(-1)^{n^2}}{\pi^n}\left(\sin\left(\frac{\pi\alpha}{2}\right)\right)^n G(n+2)\prod_{j=0}^{n-1}\Gamma\left(1-\frac{\alpha}{2}-n+j\right)\\&
=(-1)^{\frac{n(n-1)}{2}} \frac{G(n+2)}{\prod_{j=0}^{n-1}\Gamma\left(\frac{\alpha}{2}+n-j\right)}=(-1)^{\frac{n(n-1)}{2}} \frac{G(n+2)G(\frac{\alpha}{2}+1)}{G(\frac{\alpha}{2}+n+1)}
\end{align*}
This identity is valid for $\alpha\in 2\mathbb{Z}$, $\alpha\geq 0$
 as well by taking the limit of both sides of the equation.

For $\alpha\leq0$, $\alpha \in 2\mathbb{Z}$ we make the change of variables $t_k=\frac{x}{2}s_k$ and deform the contour to $\Gamma_2$. We get
 \begin{equation}
\Delta_n(x,\alpha)\sim\frac{d_1^n}{n!}(-1)^{\frac{n(n+1)}{2}}\left(\frac{x}{2}\right)^{-\frac{n\alpha}{2}}\intop_{\Gamma_2}\ldots \intop_{\Gamma_2}\prod_{1\leq j<k\leq n}(s_k-s_j)^2\prod_{k=1}^{n} \frac{\ee^{-\frac{1}{s_k}}}{2 \pi \ii s_k^{n+\frac{\alpha}{2}}}ds_k 
        \end{equation}
      Again the expression for the integral was computed earlier for $\alpha \notin 2\mathbb{Z}$, see \eqref{eq:H2_formula} with $r_c=0$.  As the result we get
        \begin{align*}
&\intop_{\Gamma_2}\ldots \intop_{\Gamma_2}\prod_{1\leq j<k\leq n}(s_k-s_j)^2\prod_{k=1}^{n} \frac{\ee^{-\frac{1}{s_k}}}{2 \pi \ii s_k^{n+\frac{\alpha}{2}}}ds_k=\frac{(-1)^{n}}{\pi^n}\ee^{-\ii\pi n\frac{\alpha}{2}}\left(\sin\left(\frac{\pi\alpha}{2}\right)\right)^n G(n+2)\\&\times\prod_{j=0}^{n-1}\Gamma\left(1+\frac{\alpha}{2}-n+j\right)
=(-1)^{\frac{n(n+1)}{2}} \ee^{-\ii\pi n\frac{\alpha}{2}}\frac{G(n+2)}{\prod_{j=0}^{n-1}\Gamma\left(-\frac{\alpha}{2}+n-j\right)}\\&=(-1)^{\frac{n(n+1)}{2}} \ee^{-\ii\pi n\frac{\alpha}{2}}\frac{G(n+2)G(-\frac{\alpha}{2}+1)}{G(-\frac{\alpha}{2}+n+1)}
\end{align*}
Again this identity is valid for $\alpha\in 2\mathbb{Z}$, $\alpha\leq 0$
 as well by taking the limit of both sides of the equation. Therefore we arrive at the following result.
\begin{theorem}\label{thm:hankel_bessel_det_asymptotic_ext_int_alpha_zero_d2}
 The Toeplitz determinant \eqref{def:hankel_bessel_determinant} admits the following $x \rightarrow 0$, $-\pi<\arg(x)<\pi$ asymptotics for fixed $d_1\in \mathbb{C}$, $n\in\mathbb{N}\cup\{0\}$, 
 $d_2= 0$,  and $\alpha\in 2\mathbb{Z} $.
 \begin{enumerate}
     \item If $\alpha\geq 0$, then
\begin{align*}
   & \Delta_n(x,\alpha)\sim{d_1^{n}} \frac{G(n+1)G(\frac{\alpha}{2}+1)}{G(\frac{\alpha}{2}+n+1)}{}\left(\frac{x}{2}\right)^{\frac{n\alpha}{2}},\text{ as }x\to 0,\quad -\pi<\arg(x)<\pi.
\end{align*}
    \item If $\alpha\leq 0$, then
\begin{align*}
   & \Delta_n(x,\alpha)\sim{d_1^{n}} \ee^{\frac{\ii\pi n\alpha}{2}}\frac{G(n+1)G(-\frac{\alpha}{2}+1)}{G(-\frac{\alpha}{2}+n+1)}{}\left(\frac{x}{2}\right)^{-\frac{n\alpha}{2}},\text{ as }x\to 0,\quad -\pi<\arg(x)<\pi.
   \end{align*}
 \end{enumerate}

\normalsize
Here $G(x)$ refers to the Barnes $G$-function.\\
\end{theorem}
This result is known in the literature, see \cite[(5.7)]{forrester_witte_boundary_conditions}.  This implies the following asymptotics for the solution.
\begin{theorem}\label{thm:q_n_asymptotic_int_alpha_zero_d2} Solution \eqref{eq:q_n_formula} of the Painlev\'e-III equation \eqref{eq:q_n_painleve_equation}  admits the following  $x \rightarrow 0$, $-\pi<\arg(x)<\pi$ asymptotics for fixed $d_1\in \mathbb{C}$, $n\in\mathbb{N}\cup\{0\}$, $d_2=0$ , $\alpha \in 2\mathbb{Z}$
\begin{enumerate}
    \item If $\alpha>0$, then \begin{align*}&u_n(x,\alpha)\sim \left({-\frac{\alpha}{2}-n}{}\right)\left(\frac{x}{2}\right)^{-1},\text{ as }x\to 0,\quad -\pi<\arg(x)<\pi.\end{align*}
   \item If  $\alpha\leq 0$, then 
   \begin{align*}
&u_n(x,\alpha)\sim \left(\frac{2}{2+2n-\alpha}\right) \frac{x}{2},\text{ as }x\to 0,\quad -\pi<\arg(x)<\pi.
   \end{align*}
   
\end{enumerate}
\end{theorem}
It is interesting to observe that we could have obtained Theorems \ref{thm:hankel_bessel_det_asymptotic_ext_int_alpha_zero_d2}, \ref{thm:q_n_asymptotic_int_alpha_zero_d2} from Theorems \ref{thm:hankel_bessel_det_asymptotic}, \ref{thm:q_n_asymptotic} by taking the limit $\im(\alpha)\to 0$ only for $\alpha\geq 0$, but not for $\alpha<0$.
\end{document}